\documentclass[12pt,leqno]{article}

\usepackage{amssymb,amsfonts,amsmath,amsthm,amscd,mathrsfs}
\usepackage{PSFigure,psfrag}
\usepackage{graphics,graphicx}
\setlength{\textwidth}{16cm}
\addtolength{\textheight}{5cm}
\addtolength{\topmargin}{-2cm}
\addtolength{\oddsidemargin}{-1.5cm}
\addtolength{\evensidemargin}{-1cm}
\hoffset2.5pt

\def\IL{{\mathbb L}}
\def\IP{{\mathbb P}}
\def\IR{{\mathbb R}}
\def\IN{{\mathbb N}}

\def\IZ{{\mathbb Z}}

\def\n{\noindent}
\def\dsl{\textstyle\sum\limits}

\def\dis{\displaystyle}
\def\o{\omega}
\def\fr{\mbox{\footnotesize $\dis\frac{1}{2}$}}
\def\frvier{\mbox{\footnotesize $\dis\frac{1}{4}$}}
\def\ov{\overline}
\def\ve{\varepsilon}
\def\f{\footnotesize}
\def\r{\rightarrow}
\def\point{{\mbox{\large $.$}}}
\def\wh{\widehat}
\def\wt{\widetilde}

\def\cA{{\cal A}}
\def\cB{{\cal B}}
\def\cC{{\cal C}}

\def\cT{{\cal T}}

\def\cI{{\cal I}}
\def\cJ{{\cal J}}

\def\cP{{\cal P}}

\def\cS{{\cal S}}
\def\cG{{\cal G}}

\def\cV{{\cal V}}
\def\cW{{\cal W}}

\dimendef\dimen=0

\newtheorem{theorem}{Theorem}[section]
\newtheorem{lemma}[theorem]{Lemma}
\newtheorem{definition}[theorem]{Definition}
\newtheorem{corollary}[theorem]{Corollary}
\newtheorem{proposition}[theorem]{Proposition}
\newtheorem{remark}[theorem]{Remark}

\thispagestyle{empty}
\setcounter{page}{0}
\begin{document}

\baselineskip14pt
\noindent

\begin{center}
{\bf DECOUPLING INEQUALITIES AND\\ INTERLACEMENT PERCOLATION ON $\pmb{G \times \IZ}$}
\end{center}

\vspace{0.5cm}

\begin{center}
Alain-Sol Sznitman$^*$
\end{center}

\bigskip
\begin{center}
\end{center}

\bigskip
\begin{abstract}
We study the percolative properties of random interlacements on $G \times \IZ$, where $G$ is a weighted graph satisfying certain sub-Gaussian estimates attached to the parameters $\alpha > 1$ and $2 \le \beta \le \alpha + 1$, describing the respective polynomial growths of the volume on $G$ and of the time needed by the walk on $G$ to move to a distance. We develop decoupling inequalities, which are a key tool in showing that the critical level $u_*$ for the percolation of the vacant set of random interlacements is always finite in our set-up, and that it is positive when $\alpha \ge 1 + \frac{\beta}{2}$. We also obtain several stretched exponential controls both in the percolative and non-percolative phases of the model. Even in the case where $G = \IZ^d$, $d \ge 2$, several of these results are new.
\end{abstract}

\vspace{7cm}

\noindent
Departement Mathematik  \hfill  
\\
ETH-Zentrum\\
CH-8092 Z\"urich\\
Switzerland

\vfill
\noindent
$\overline{~~~~~~~~~~~~~~~~~~~~~~~}$\\
$^*$ {\small This research was supported in part by the grant ERC-2009-AdG  245728-RWPERCRI}

\newpage

\thispagestyle{empty}
~

\newpage
\setcounter{page}{1}

\setcounter{section}{-1}
\section{Introduction}

Random interlacements offer a microscopic model for the structure left at appropriately chosen time scales by random walks on large recurrent graphs, which are locally transient. In this work we investigate the percolative properties of random interlacements on transient weighted graphs $E$ of the form $G \times \IZ$, where the walk on the weighted graph $G$ satisfies certain sub-Gaussian estimates governed by two parameters $\alpha > 1$ and $\beta$ in $[2, \alpha + 1]$, respectively reflecting the volume growth of $G$, and the diffusive or sub-diffusive nature of the walk on $G$. Random interlacements on the weighted graphs considered in this article occur for instance in the description of the microscopic structure left by random walks on discrete cylinders $G_N \times \IZ$, with large finite bases $G_N$, which tend to look like $G$ in the vicinity of certain points, when the walk on $G_N \times \IZ$, runs for times comparable to the square of the number of points in $G_N$, see \cite{Szni09a}, \cite{Wind10}. In the spirit of \cite{Wind08}, they are expected to occur in the description of the microscopic structure left by random walks on suitable sequences of large finite graphs $E_N$, which tend to look like $E$ in the vicinity of certain points, when the walk runs for times proportional to the number of points in $E_N$. 

Here, our main interest lies in the percolative properties of $\cV^u$ the vacant set at level $u$ of random interlacements on $E$, as $u\ge0$ varies. This set is the complement in $E$ of the trace of the trajectories with labels at most $u$ in the interlacement point process. These percolative properties are naturally related to various disconnection and fragmentation problems, see \cite{DembSzni06}, \cite{Szni09c}, \cite{Szni09b}, \cite{AbetCandLairConi04}, \cite{BenjSzni06},  \cite{CernTeix11}, \cite{CernTeixWind09}, \cite{CoopFrie10}, \cite{TeixWind10}. For instance in the case of random walks on the cylinders $(\IZ/N\IZ)^d \times \IZ$, with $d \ge 2$, one can construct couplings of the trace in a box of size $N^{1- \varepsilon}$ of the complement of the trajectory of the walk after completion of a suitable number of excursions to the vicinity of the box, with the vacant set of random interlacements in a box of size $N^{1 - \varepsilon}$ in $\IZ^{d+1}$, see \cite{Szni09c}, \cite{Szni09b}, and \cite{Beli11}. These couplings enable one to show that the disconnection time $T_N$ of the cylinder by the walk has precise order $N^{2d}$. They also suggest a candidate limit distribution for $T_N / N^{2d}$ as $N$ goes to infinity, which brings into play the critical parameter $u_*$ for the percolation of $\cV^u$, see \cite{Szni09c}, \cite{Szni09b}. Proving that such a limit holds presently rests on being able to sharpen controls on the percolative properties of $\cV^u$ when $u$ is fixed but possibly close to $u_*$. In the case of random walks on $(\IZ/N\IZ)^d$, $d \ge 3$, similar couplings between the trace left by the complement of the trajectory of the walks at time $u N^d$ in boxes of size $N^{1-\varepsilon}$, and the trace of $\cV^u$ in boxes of size $N^{1-\varepsilon}$ in $\IZ^d$ can be constructed, cf.~\cite{TeixWind10}. They enable one to show that for small $u$ there typically is a giant component containing order $N^d$ sites in the complement of the trajectory of the walk at time $uN^d$, which is unique, at least when $d \ge 5$, and that for large enough $u$ there typically are only small components. The critical value $u_*$ for the percolation of $\cV^u$ is moreover conjectured to be the threshold for this fragmentation problem, separating the two behaviors mentioned above. The proof of such a conjecture analogously rests on the improvement of controls on the percolative properties of $\cV^u$ for $u$ fixed but possibly close to $u_*$. When $(\IZ/N\IZ)^d$ is replaced by a large $d$-regular graph on $N$ vertices, with $d \ge 3$, and random walk runs on the graph up to time $uN$, one can indeed show that when $u < u_*$, for large $N$ there typically is a component in the complement of the trajectory with order $N$ vertices, which is unique, whereas for $u > u_*$ all components are small. The relevant threshold for this fragmentation problem is now the critical value $u_*$ for the percolation of the vacant set of random interlacements on the $d$-regular tree, (the local model for typical points of large random $d$-regular graphs), cf.~\cite{CernTeix11}, \cite{CernTeixWind09}, \cite{CoopFrie10}.

\pagebreak
We now describe our results in more detail, but refer to Section 1 for precise definitions. We consider graphs of the form $E = G \times \IZ$, where $G$ is an infinite connected graph of bounded degree, endowed with bounded and uniformly positive weights along its edges. We assume that $G$ is $\alpha$-Ahlfors regular for some $\alpha > 1$, that is, see also (\ref{1.7}),
\begin{equation}\label{0.1}
\begin{array}{l}
\mbox{the volume of balls of radius $R$ in the graph-distance on $G$}
\\
\mbox{behaves as $R^\alpha$, up to multiplicative constants.}
\end{array}
\end{equation}

\n
We endow the product graph $E = G \times \IZ$ with weights, which take the value $1$ for ``vertical edges'' in the $\IZ$-direction, and agree with the corresponding weights on $G$ for ``horizontal edges'' in the $G$-direction. These weights naturally determine a random walk on $E$, which at each step jumps to one of its neighbors with a probability proportional to the weight of the edge leading to this neighbor. Due to (\ref{0.1}) the random walk on $E$ is in fact transient, see below(\ref{1.8}), and we assume that the Green density $g(\cdot,\cdot)$ has a power decay of the following kind: there is a $\beta$ in $[2,\alpha + 1]$, such that
\begin{equation}\label{0.2}
c(d(x,x') \vee 1)^{-\nu} \le g(x,x') \le c' (d(x,x') \vee 1)^{-\nu}, \;\mbox{for $x,x'$ in $E$}\,,
\end{equation}

\n
with $\nu = \alpha - \frac{\beta}{2}$ (a positive number due to the constraints on $\alpha, \beta$), and $d(\cdot,\cdot)$ the distance on $E$ defined by:
\begin{equation}\label{0.3}
d(x,x') = \max (d_G(y,y'), \,|z-z'|^{\frac{2}{\beta}}), \;\mbox{for $x = (y,z)$, $x' = (y',z')$ in $E$,}
\end{equation}

\n
where $d_G(\cdot,\cdot)$ stands for the graph-distance on $G$.

\medskip
 In fact, with the help of \cite{GrigTelc01}, \cite{GrigTelc02}, the above assumptions can be restated in terms the following sub-Gaussian estimates for the transition densities $p_n^G(y,y')$ of the walk determined by the weighted graph $G$, see Remark \ref{rem1.1} 2): 
\begin{equation}\label{0.4}
\begin{array}{rl}
{\rm i)} & p^G_n(y,y') \le c\,n^{-\frac{\alpha}{\beta}} \,\exp\{ - c(d_G(y,y')^\beta / n)^{\frac{1}{\beta - 1}}\}, \;\mbox{for $n \ge 1$, $y, y'$ in $G$},
\\[2ex]
{\rm ii)} &p^G_n(y,y') + p_{n+1}^G (y,y') \ge c\,n^{-\frac{\alpha}{\beta}} \,\exp\{-c(d_G(y,y')^\beta / n)^{\frac{1}{\beta - 1}}\}, 
\\[1ex]
&\mbox{for $n \ge 1 \vee d_G(y,y'), \; y,y'$ in $G$}\,,
\end{array}
\end{equation}

\medskip\n
and $c$ refers to positive constants changing from place to place.

\medskip
The classical example $G = \IZ^d$, $d \ge 2$, endowed with the natural weight equal to $1$ along all edges thus corresponds to $\alpha = d$, $\beta = 2$, whereas the case of $G$, the discrete skeleton of the Sierpinski gasket endowed with its natural weight, corresponds to $\alpha = \frac{\log 3}{\log 2}$, $\beta = \frac{\log 5}{\log 2} \;(> 2)$, see \cite{Jone96}, \cite{BarlCoulKuma05}. We also refer to \cite{Barl04a}, \cite{BarlCoulKuma05}, \cite{GrigTelc01}, \cite{GrigTelc02}, \cite{HambKuma04}, for many more examples and for equivalent characterizations of (\ref{0.4}).

\medskip
Random interlacements on the transient weighted graph $E$ consist in essence of a Poisson point process on the state space of doubly infinite trajectories on $E$ modulo time-shift, which tend to infinity at positive and negative infinite times, see \cite{Teix09b}, Section 1, and \cite{Szni10a}, Remark 1.4. A non-negative parameter $u$ plays the role of a multiplicative factor of the $\sigma$-finite intensity measure of this Poisson point process. In fact, one constructs ``at once'', on the same probability space $(\Omega, \cA, \IP)$, the whole family $\cI^u$, $u \ge 0$, of random interlacements at level $u$. These random subsets of $E$ are infinite when $u$ is positive, and come as unions of the ranges of the trajectories modulo time-shift with label at most $u$, in the canonical Poisson cloud constructed on $(\Omega, \cA, \IP)$. The law on $\{0,1\}^E$ of the indicator function of $\cI^u$ is characterized by the identity:
\begin{equation}\label{0.5}
\IP[\cI^u  \cap K = \emptyset] = \exp\{ - u \;{\rm cap}(K)\}, \;\mbox{for all finite subsets $K$ of $E$}\,,
\end{equation}
with ${\rm cap}(K)$ the capacity of $K$, see (\ref{1.21}).

\medskip
There is by now substantial evidence that the percolative properties of the vacant set $\cV^u = E \backslash \cI^u$ play an important role in understanding various disconnection and fragmentation problems for random walks on large recurrent approximations of $E$, see in particular  \cite{Szni09c}, \cite{Szni09b}, \cite{CernTeixWind09}, \cite{TeixWind10}.

\medskip
To further discuss these properties, we define for $x$ in $E$ and $u \ge 0$,
\begin{equation}\label{0.6}
\eta(x,u) = \IP[x \stackrel{\cV^u}{\longleftrightarrow} \infty]\,,
\end{equation}

\medskip\n
where the above notation refers to the event ``	$x$ belongs to an infinite connected component of $\cV^u$''. It is known from Corollary 3.2 of \cite{Teix09b} that the critical value
\begin{equation}\label{0.7}
u_* = \inf\{u \ge 0; \,\eta(x,u) = 0\} \in [0,\infty]\,,
\end{equation}

\n
does not depend on the choice of $x$. It is an important question whether $u_*$ is finite and positive. We show in Theorem \ref{theo4.1} that $\cV^u$ does not percolate for large $u$, that is:
\begin{equation}\label{0.8}
u_* < \infty\,.
\end{equation}

\n
When $\alpha \ge 1 + \frac{\beta}{2}$, i.e. $\nu \ge 1$ in (\ref{0.2}), we show in Theorem \ref{theo5.1} that for small $u > 0$, $\cV^u$ percolates in ``half-planes'' of $E = G \times \IZ$ that are product of a semi-infinite geodesic path in $G$ with $\IZ$, and in particular that
\begin{equation}\label{0.9}
u_* > 0, \;\mbox{when}\; \nu \ge 1\,.
\end{equation}

\n
Some special cases of (\ref{0.8}), (\ref{0.9}) are known, for instance when $E$ is $\IZ^{d+1}$, $d \ge 2$, endowed with its natural weight, see \cite{Szni10a}, \cite{SidoSzni09a}. Also when $\nu \ge 6$, the methods of Section 4 of \cite{Teix09b} can likely be adapted to prove (\ref{0.9}). But Theorems \ref{theo4.1} and \ref{theo5.1} yield further information. If $B(x,r)$ denotes the closed ball with center $x$ in $E$ and radius $r \ge 0$ in the $d(\cdot,\cdot)$-metric, see (\ref{0.3}), and $\partial_{\rm int} B(x,r)$ stands for the set of points in $B(x,r)$ neighboring $B(x,r)^c$, we introduce:
\begin{equation}\label{0.10}
u_{**} = \inf\{u \ge 0; \;\underset{L \r \infty}{\underline{\lim}}\;\sup\limits_{x \in E} \;\IP[B(x,L) \stackrel{\cV^u}{\longleftrightarrow} \partial_{\rm int} B(x,2L)] = 0\} \in [0,\infty]\,,
\end{equation}

\n
where the event under the probability refers to the existence of a nearest neighbor path in $\cV^u$ between $B(x,L)$ and $\partial_{\rm int} B(x,2L)$. We trivially have $u_* \le u_{**} \le \infty$, and Theorem \ref{theo4.1} actually shows that
\begin{equation}\label{0.11}
u_{**} <\infty\,,
\end{equation}

\n
and that for $u > u_{**}$, the connectivity function has a stretched exponential decay
\begin{equation}\label{0.12}
\IP[x  \stackrel{\cV^u}{\longleftrightarrow} \partial_{\rm int}  B(x,L)] \le c\,e^{-c'\,L^\gamma},
\end{equation}

\n
where $c(u)$, $c'(u) > 0$ and $0 < \gamma(u) < 1$ are constants possibly depending on $u$. We also introduce the value
\begin{equation}\label{0.13}
\wt{u} = \inf\{u \ge 0; \underset{L \r \infty}{\underline{\lim}} \;\sup\limits_{\cP, x \in \cP} \,\IP[B(x,L) \stackrel{*-\cI^u \cap \cP}{\longleftrightarrow} \partial_{\rm int}B(x,2L)] > 0\}\,,
\end{equation}

\n
where $\cP$ runs over all the half-planes in $E$, and the event under the probability refers to the existence of a $*$-path in $\cI^u \cap \cP$ between $B(x,L)$ and $\partial_{\rm int} B(x,2L)$, see below (\ref{3.9}) for the definition of a $*$-path in $\cP$. We show in Theorem \ref{theo5.1} and Corollary \ref{cor5.5} that
\begin{equation}\label{0.14}
\wt{u} \le u_*\,,
\end{equation}

\n
that for $u < \wt{u}$ and for any half-plane $\cP$, and $x$ in $\cP$, as above,
\begin{equation}\label{0.15}
\mbox{$\IP[$the component of $x$ in $\cV^u \cap \cP$ is finite and intersects $\partial_{\rm int} B(x,L)] \le ce^{-c'\,L^{\gamma'}}$},
\end{equation}

\n
with $c(u)$, $c'(u) > 0$ and $0 < \gamma(u)' < 1$, constants possibly depending on $u$, that in (\ref{0.13}) the quantity under the $\liminf$ satisfies a similar stretched exponential decay when $u < \wt{u}$, and importantly that
\begin{equation}\label{0.16}
\wt{u} > 0, \;\mbox{when $\nu \ge 1$}\,.
\end{equation}

\n
When $E = \IZ^{d+1}$, $d \ge 2$, for all the above results, the distance in (\ref{0.3}) and the associated balls can be replaced with the more common sup-norm distance and corresponding balls. Even in the special case $E = \IZ^{d+1}$, $d \ge 2$, the above results improve on present knowledge, see Remarks \ref{rem4.2} 1) and \ref{rem5.6} 1). 

\medskip
However in the case of a general $E$ with $\nu < 1$, (for instance when $G$ is the discrete skeleton of the Sierpinski gasket), it is a challenging question left open by the present work to understand whether $u_* > 0$, see Remark \ref{rem5.6} 2).

\medskip
We now provide some comments on the proofs. An important difficulty stems from the long range interaction present in random interlacements, (for instance the correlation of the events $\{x \in \cV^u\}$ and $\{x' \in \cV^u\}$ decays as $d(x,x')^{-\nu}$, when the distance between $x$ and $x'$ grows, see (\ref{1.38})). One important novelty of the present work, (even in the classical case $E = \IZ^{d+1}$, $d \ge 2$), is that we use the same renormalization scheme to treat both the percolative and non-percolative regimes of $\cV^u$. The heart of the matter is encapsulated in the {\it decoupling inequalities} stated in Theorem \ref{theo2.6}.

\medskip
The decoupling inequalities bound from above the probability of an intersection of $2^n$ decreasing events, or $2^n$ increasing events that depend on the respective traces of random interlacements in $2^n$ boxes in $E$, which are well ``spread out'', and can be thought of as the ``bottom leaves'' of a dyadic tree of depth $n$.

\medskip
More precisely, for $2^n$ decreasing, resp.~increasing, events $A_m$, resp.~$B_m$, on $\{0,1\}^E$, labelled by $m$, respectively adapted to $2^n$ balls of size of order $L_0$, which are well ``spread out'', this last feature involves a geometrically growing sequence of length scales $L_n = \ell^n_0 \,L_0$, see (\ref{2.1}) - (\ref{2.5}), the decoupling inequalities state that for $K > 0$, $0 < \nu' < \nu = \alpha - \frac{\beta}{2}$, when $\ell_0 \ge c(K,\nu')$, $L_0 \ge 1$, $u > 0$, one has
\begin{equation}\label{0.17}
\begin{array}{l}
\IP \Big[\bigcap\limits_m \,A_m^{u_+}\Big]  \le \prod\limits_m \,(\IP[A^u_m] + \ve(u)), \;\mbox{and} 
\end{array}
\end{equation}
\begin{equation}\label{0.18}
\begin{array}{l}
\!\!\!\!\!\!\! \IP \Big[\bigcap\limits_m \,B_m^{u_-}\Big]  \le \prod\limits_m \,(\IP[B^u_m] + \ve(u_-)),   
\end{array}
\end{equation}
where
\begin{equation}\label{0.19}
\begin{array}{l}
u_{\pm} = \prod\limits_{n \ge 0} \,\Big(1 + c_1\mbox{\f $\sqrt{\dis\frac{K}{(n+1)^3}}$}\; \;\ell_0^{-\frac{(\nu - \nu ')}{2}}\Big)^{\pm 1} u\,,
\end{array}
\end{equation}
and we have set for $v > 0$,
\begin{equation}\label{0.20}
\ve (v) = 2\,e^{- K v L_0^\nu\,\ell_0^{\nu '}} / (1 - e^{-K v \,L_0^\nu\,\ell_0^{\nu '}})\,.
\end{equation}

\medskip\n
The notation $A^v_m$, resp.~$B_m^v$, in (\ref{0.17}), resp.~(\ref{0.18}), refers to the event where the indicator function of $\cI^v$ belongs to $A_m$, resp.~$B_m$.

\medskip
The tuning of the level $u$ into $u_+$ or $u_-$ in (\ref{0.17}), (\ref{0.18}), corresponds to the ``sprinkling technique'', where throwing in some additional trajectories of the random interlacements provides a way to dominate long range interactions. Several variations of this strategy have already been employed, see~ \cite{Szni10a}, \cite{SidoSzni09a}, \cite{SidoSzni09b}. The novelty here is that percolative and non-percolative regimes are handled in a unified and more powerful fashion thanks to the decoupling inequalities, which themselves are the consequence of the renormalization step stated in Theorem \ref{theo2.1}.

\medskip
The strength of the decoupling inequalities comes to bear when applied to {\it cascading families} of events as discussed in Section 3. Informally, the occurrence of an event of such a family on a ``large scale'' trickles down to a ``lower scale'', and permits the construction of dyadic trees of finite depth, forcing the occurrence of events of the family at the bottom scale, in well spread out boxes corresponding to the leaves of the tree. Balancing out the combinatorial complexity of the possible dyadic trees arising in this construction, with the bounds coming from the decoupling inequalities (\ref{0.17}), (\ref{0.18}), is the key to the derivation of the stretched exponential bounds in (\ref{0.12}), (\ref{0.15}). The approach we develop here also gives a way to revisit \cite{Teix10} from a new perspective, see Remarks \ref{rem3.3} 2) and \ref{rem3.8}) 1).

\medskip
We will now explain the structure of this article.

\medskip
In Section 1 we introduce further notation and collect several useful results about the weighted graph $E$, random walk on $E$, and random interlacements on $E$. The Harnack inequality that appears in Lemma \ref{lem1.2} is instrumental in Section 2.

\medskip
Section 2 develops the renormalization scheme underpinning the decoupling inequalities of Theorem \ref{theo2.6}. The key renormalization step is carried out in Theorem \ref{theo2.1}.

\medskip
In Section 3 we bring into play the cascading property and give several examples in Proposition \ref{prop3.2} and Remark \ref{3.3}. The combination of this notion with the decoupling inequalities leads to the bounds stated in Theorem \ref{theo3.4}. Some consequences are stated in the Corollaries \ref{cor3.5} and \ref{cor3.7}.

\medskip
Section 4 applies Theorem \ref{theo3.4} and Corollary \ref{cor3.5} to prove the finiteness of $u_{**}$, (and hence of $u_*$), as well as the stretched exponential decay of the connectivity function when $u > u_{**}$, see (\ref{0.10}) - (\ref{0.12}). The main result appears in Theorem \ref{theo4.1}.

\medskip
In Section 5 we apply Corollary \ref{cor3.5} and \ref{cor3.7} to show in Theorem \ref{theo5.1} that $\wt{u} \le u_*$, derive for $u < \wt{u}$ a stretched exponential bound for the occurrence of a large finite cluster in the intersection of $\cV^u$ with a half-space, and establish that $\wt{u} > 0$, when $\nu \ge 1$.

\medskip
The Appendix provides the proof of the cascading property of the family of ``separation events" introduced in Remark \ref{rem3.3}) 2), in the spirit of \cite{Teix10}.

\medskip
Let us finally explain the convention we use concerning constants. We denote with $c,c', \wt{c}, \ov{c}$ positive constants changing from place to place, which only depend on the weighted graph $G$, (and in particular on $\alpha$ and $\beta$). Numbered constants $c_0,c_1,\dots$ refer to the value corresponding to their first appearance in the text. Finally dependence of constants on additional parameters appears in the notation. For instance $c(u)$ denotes a positive constant depending on the weighted graph $G$ and on $u$.

\bigskip\n
{\bf Acknowledgements:} We wish to thank Augusto Teixeira for pointing out reference \cite{AbetCandLairConi04}.

\section{Notation and some useful facts}
\setcounter{equation}{0}

In this section we introduce the precise set-up and collect various useful results about random walks and random interlacements on the type of weighted graphs we consider in this article. An important control on Harnack constants for harmonic functions in $d(\cdot,\cdot)$-balls is stated in Lemma \ref{lem1.2}. Further we collect some facts concerning capacity and entrance probabilities in Lemma \ref{lem1.3}. Several useful features of random interlacements appear in Lemma \ref{lem1.4}.

\medskip
We let $\IN = \{0,1,\dots\}$ denote the set of natural numbers. When $u$ is a non-negative real number, we let $[u]$ stand for the integer part of $u$. Given a finite set $A$ we denote with $|A|$ its cardinality. The graphs we consider have a countable vertex set and an edge set made of unordered pairs of the vertex set. With an abuse of notation we often denote a graph by its vertex set, when this causes no confusion. When $x,x'$ are distinct vertices of the graph, we write $x \sim x'$, if $x$ and $x$ are neighbors, i.e.~if $\{x,x'\}$ is an edge of the graph. The graphs we discuss here are connected and have bounded degree, i.e.~each vertex has a uniformly bounded number of neighbors. A finite path in a graph refers to a sequence $x_0,\dots,x_N$, $N \ge 0$, of vertices of the graph such that $x_i \sim x_{i+1}$, for $0 \le i < N$. We sometimes write path instead of finite path, when this causes no confusion. Given a graph $\Gamma$ as above, we denote with $d_\Gamma(\cdot,\cdot)$ the graph-distance on $\Gamma$, i.e.~the minimal number of steps of a finite path joining two given vertices of $\Gamma$. The graphs under consideration being connected, this number is automatically finite. We write $B_\Gamma(x,r)$ for the closed $d_\Gamma$-ball with center $x$ in $\Gamma$ and radius $r \ge 0$. When $U$ is a subset of vertices in $\Gamma$, we denote by $\partial U$, $\partial_{\rm int} U$, and $\ov{U}$, the vertex boundary, the interior vertex boundary, and the closure of $U$:
\begin{equation}\label{1.1}
\begin{split}
\partial U =& \; \{x \in U^c; \exists x' \in U, \; \mbox{with} \; x \sim x'\}, 
\\
\partial_{\rm int} U =&\; \{x \in U; \exists x' \in U^c, \;\mbox{with} \;x \sim x'\}, \;\mbox{and}\; \ov{U} = U \cup \partial U\,.
\end{split}
\end{equation}

\medskip\n
Further we denote by $\chi_U$ the indicator function of $U$, and write $U \subset \subset \Gamma$ to express that $U$ is a finite subset of the vertex set of $\Gamma$. A weight on $\Gamma$ is a symmetric non-negative function $\rho_{x,x'}$ on $\Gamma \times \Gamma$ such that $\rho_{x,x'} > 0$ if and only if $x \sim x'$. A weight $\rho$ induces a measure on the vertex set of $\Gamma$ via
\begin{equation}\label{1.2}
\rho(x) = \dsl_{x' \sim x} \; \rho_{x, x'} \;\; \mbox{and} \;\; \rho(U) = \dsl_{x \in U} \, \rho(x), \; \mbox{for $U \subseteq \Gamma$}\,.
\end{equation}

\n
The natural weight on $\Gamma$ refers to the choice $\rho_{x,x'} = 1_{x \sim x'}$, and in this case $\rho(x)$ coincides with the degree of the vertex $x$ in $\Gamma$. The set $\IZ^m$, $m \ge 1$, throughout this work is tacitly endowed with its usual graph structure and its natural weight, unless explicitly stated otherwise.

\medskip
A weighted graph $(\Gamma, \rho)$ induces a random walk on the vertex set of $\Gamma$ having transition probability 
\begin{equation}\label{1.3}
p_{x,x'} = \rho_{x,x'} / \rho(x), \;\mbox{for $x,x'$ in $\Gamma$}\,.
\end{equation}

\n
It satisfies the detailed balance equations relative to $\rho$:
\begin{equation}\label{1.4}
\rho(x) \,p_{x,x'} = \rho(x')\,p_{x'x}, \; \mbox{for $x,x'$ in $\Gamma$}\,.
\end{equation}

\n
We let $P_x$ stand for the canonical law of the walk on $\Gamma$ starting at $x$, with transition probability as in (\ref{1.3}), and denote by $(X_n)_{n \ge 0}$ the canonical process. The transition density of the walk is defined as follows:
\begin{equation}\label{1.5}
p_n(x,x') =P_x[X_n = x'] / \rho(x'), \; \mbox{for $n \ge 0$, $x,x'$ in $\Gamma$}\,.
\end{equation}

\n
It is a symmetric function of $x,x'$ thanks to (\ref{1.4}).

\medskip
As explained in the Introduction our main interest lies in graphs of the form $E = G \times \IZ$, where $G$ is an infinite connected graph of bounded degree endowed with a weight $\rho^G$ such that $\rho^G_{x,x'}$ is uniformly bounded, and uniformly positive, when $x \sim x'$. We then endow $E$ with the weight:
\begin{equation}\label{1.6}
\begin{split}
\rho_{x,x'} & = \rho^G_{y,y'}, \;\mbox{if $z = z'$}, \qquad  \qquad \mbox{with $x = (y,z), \; x' = (y',z')$ in $E$},
\\ 
& = 1, \quad\;\; \mbox{if $|z-z'| = 1$ and $y = y'$},
\\ 
& = 0, \quad\;\;  \mbox{otherwise.}
\end{split}
\end{equation}

\n
In particular for $x$ as above, $\rho(x) = \rho^G(y) + 2$, see (\ref{1.2}).

\medskip
We further assume that for some $\alpha > 1$, $(G, \rho^G)$ is $\alpha$-Ahlfors regular, that is
\begin{equation}\label{1.7}
c\, R^\alpha \le \rho^G (B_G(y,R)) \le c' R^\alpha, \; \mbox{for all $R \ge \frac{1}{2}$ and $y$ in $G$}\,.
\end{equation}

\n
When referring to the random walk in the weighted graph $(E,\rho)$, we use the notation introduced below (\ref{1.4}). The Green density is defined as follows:
\begin{equation}\label{1.8}
g(x,x') = \dsl_{n \ge 0} p_n(x,x'), \;\mbox{for $x,x'$ in $E$}\,.
\end{equation}

\n
It is a symmetric function. It is in fact finite. To see this point, one uses (\ref{1.7}) combined with the upper bound from Proposition 3.13 of \cite{Kuma10} on the transition density of the continuous time walk on $G$ using the weights on $G$ as jump rates. The transition density of the analogous continuous time walk on $E$ is then the product of the transition densities of the continuous time walk on $G$ and of the continuous time walk on $\IZ$. Relating $g(\cdot,\cdot)$ to the Green density of the continuous time walk on $E$, the finiteness of $g(\cdot,\cdot)$ readily follows. We also refer to the Appendix of \cite{Szni08} for related controls. In a more substantive fashion, we crucially assume the existence of a number $\beta$ in $[2,1+ \alpha]$ such that 
\begin{equation}\label{1.9}
c(d(x,x') \vee 1)^{-\nu} \le g(x,x') \le c'(d(x,x') \vee 1)^{-\nu}, \; \mbox{for $x,x'$ in $E$}\,,
\end{equation}

\n
where $\nu = \alpha - \frac{\beta}{2} \;(> 0)$, and $d(\cdot,\cdot)$ denotes the distance function on $E$ as in (\ref{0.3}), that is:
\begin{equation}\label{1.10}
d(x,x')= \max\{d_G(y,y'), \; |z-z'|^{\frac{2}{\beta}}\}, \; \mbox{for $x = (y,z)$, $x' = (y',z')$ in $E$}\,.
\end{equation}

\begin{remark}\label{rem1.1} \rm ~

\medskip\n
1) The condition $\beta \ge 2$ is natural since we want that $d(\cdot,\cdot)$ in (\ref{1.10}) defines a metric. The inequality $\beta \le 1 + \alpha$ is then a consequence of (\ref{1.7}) - (\ref{1.10}). Indeed by (\ref{1.9}), (\ref{1.11}), bringing into play the killed Green densities, (see also for instance (\ref{1.18}) below), one sees that $E_x[T_{B(x,R)}] \ge c\,R^\beta$, see  (\ref{1.12}) for notation, and a similar inequality then holds for the walk on $G$, with $B_G(y,R)$ in place of $B(x,R)$. The inequality $\beta \le 1 + \alpha$ now follows from a similar argument as in Lemma  1.2 of \cite{Barl04a}. 

\bigskip\n
2) Under the assumptions on $(G, \rho^G)$ stated above  (\ref{1.6}), for given $\alpha > 1$, $2 \le \beta \le \alpha + 1$, the conditions (\ref{1.7}) and (\ref{1.9}) are actually equivalent to the the sub-Gaussian bounds (\ref{0.4}) on  the probability density $p_n^G(y,y')$ of the walk on $G$, as we now explain.

\medskip
Under (\ref{0.4}), condition (\ref{1.7}) follows from Grigoryan-Telcs \cite{GrigTelc01}, p.~503-504. The proof of Lemma 5.4 of \cite{Szni08}, with obvious modifications since  $\rho^G$ is the natural weight in \cite{Szni08}, shows that (\ref{1.9}) holds. 

\medskip
Conversely given (\ref{1.7}) and (\ref{1.9}), the controls on Harnack constants for positive harmonic functions in $d(\cdot,\cdot)$-balls from Lemma \ref{lem1.2} below, imply, in the terminology of \cite{GrigTelc02}, the elliptic Harnack inequality on $(G, \rho^G)$. In addition, for the walk on $G$, when starting at $y$, the expected exit time from $B_G(y,R)$ is at least $c\,R^\beta$, as mentioned in 1) above. It is also at most $c'\,R^\beta$. To see this last point, one can for instance use the following fact, which is a consequence of (\ref{1.9}) and of lower bounds on the killed Green densities as in (\ref{1.20}) below,  that given $x$ and $x'$ in $E$ with $d(x,x') \le 3R$, the walk on $E$ enters $B(x',R)$ before exiting $B(x,cR)$, with a probability uniformly bounded away from $0$, when $c$ is chosen large. Condition (\ref{0.4}) now follows from Theorem 3.1 of Grigoryan-Telcs \cite{GrigTelc02}.

\medskip  
We refer to \cite{Barl04a}, \cite{BarlCoulKuma05},  \cite{GrigTelc01},  \cite{GrigTelc02}, \cite{HambKuma04}, for further examples and equivalent characterizations of (\ref{0.4}) combined with the assumptions stated above (\ref{1.6}). \hfill $\square$
\end{remark}

\medskip
The metric $d(\cdot,\cdot)$ in (\ref{1.10}) is well adapted to the possibly different natures of the horizontal and vertical displacements of the walk on $E$. It plays an important role throughout this work. We write $B(x,r)$ for the corresponding closed ball with center $x \in E$ and radius $r \ge 0$. When $K,K'$ are subsets of $E$ we write $d(K,K') = \inf\{d(x,x')$; $x \in K, x' \in K'\}$ for the mutual distance of $K$ and $K'$. When $K = \{x\}$ is a singleton, we simply write $d(x,K')$. As a direct consequence of (\ref{1.6}), (\ref{1.7}), (\ref{1.10}) we have the following control on the volume of balls in $E$:
\begin{equation}\label{1.11}
c\,R^{\alpha + \frac{\beta}{2}} \le \rho(B(x,R)) \le c'\,R^{\alpha + \frac{\beta}{2}}, \;\mbox{for $x \in E$, $R \ge \fr$}\,.
\end{equation}

\n
In the important special case $E = \IZ^{d+1}$, $d \ge 2$, (i.e. when $G = \IZ^d$, $d \ge 2$), we denote with $d_\infty(\cdot,\cdot)$ the sup-norm distance on $\IZ^{d+1}$ and write $B_\infty(x,r)$ for the corresponding closed ball with center $x$ and radius $r$. Let us already point out that when $E = \IZ^{d+1}$ we can replace $d(\cdot,\cdot)$ with the more common metric $d_\infty(\cdot,\cdot)$ in the various constructions and results we develop in this work.

\medskip
We denote by $W_+$ the set of nearest neighbor trajectories on $E$, defined for non-negative times and tending to infinity, and by $\cW_+$ the canonical $\sigma$-algebra on $W_+$ induced by the canonical process $X_n$, $n \ge 0$. Due to (\ref{1.9}), the walk on $(E, \rho)$ is transient and the set $W_+$ has full measure under any $P_x$. From now on we will view $P_x$ as a measure on $(W_+, \cW_+)$. When $m$ is a measure on $E$, we denote by $P_m$ the measure $\sum_{x \in E} m(x)P_x$, and by $E_m$ the corresponding expectation. We let $\theta_n$, $n \ge 0$, stand for the canonical shift on $W_+$, so that $\theta_n(w)(\cdot) = w(n + \cdot)$, for $w \in W_+$, $n \ge 0$. Given $U \subseteq E$, we write
\begin{equation}\label{1.12}
\begin{split}
H_U & = \inf\{n \ge 0; X_n \in U\}, \; \wt{H}_U = \inf\{ n \ge 1; X_n \in U\}
\\[1ex]
T_U & =  \inf\{n \ge 0; X_n \notin U\}\,,
\end{split}
\end{equation}

\n
for the entrance time in $U$, the hitting time of $U$, and the exit time from $U$. In the case $U = \{x\}$, we write $H_x$ and $\wt{H}_x$ for simplicity.

\medskip
We now come to some important controls on Harnack constants. Given $U \subseteq E$, we say that a function on $\ov{U}$, see (\ref{1.1}) for notation, is harmonic in $U$, when in the notation of (\ref{1.3})
\begin{equation}\label{1.13}
\dsl_{x' \sim x} \,p_{x,x'} f(x') = f(x), \;\mbox{for all $x$ in $U$}\,.
\end{equation}

\n
The following lemma is a crucial ingredient for the implementation of the sprinkling technique in Section 2. It is based on (\ref{1.9}) and Lemma A.2 of \cite{Szni09d}, which is an adaptation of Lemma of 10.2 of \cite{GrigTelc01}.

\begin{lemma}\label{lem1.2}
There exist $c, c_0 > 1$ such that for $x \in E$, $L \ge 1$, and $v$ a non-negative function on $\ov{B(x,c_0 L)}$, harmonic in $B(x,c_0\,L)$ one has
\begin{equation}\label{1.14}
\max\limits_{B(x,L)} \;v \le c \,\min\limits_{B(x,L)} \, v\,.
\end{equation}
\end{lemma}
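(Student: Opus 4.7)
The plan is to apply Lemma A.2 of \cite{Szni09d}, itself an adaptation of Lemma 10.2 of \cite{GrigTelc01}, which establishes an elliptic Harnack inequality on a weighted graph from two-sided Green function bounds together with volume regularity. In our setting, the hypotheses (\ref{1.9}) and (\ref{1.11}) supply exactly these inputs, so the task is essentially to verify that the abstract argument of \cite{Szni09d}, \cite{GrigTelc01} carries over to the anisotropic metric $d(\cdot,\cdot)$ of (\ref{1.10}). I would organise the proof in three steps.

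First, I would derive two-sided estimates on the killed Green function $g_U(\cdot,\cdot)$ for $U = B(x, c_0 L)$ and $y,z \in B(x, L)$. The upper bound $g_U(y,z) \le g(y,z) \le c (d(y,z) \vee 1)^{-\nu}$ is immediate from (\ref{1.9}). For the matching lower bound I would use the standard decomposition
\[
g_U(y,z) = g(y,z) - E_y\big[g(X_{T_U}, z)\big],
\]
and observe that since $X_{T_U} \in \partial B(x, c_0 L)$, one has $d(X_{T_U}, z) \ge (c_0 - 1)L$, so by (\ref{1.9}) the subtracted expectation is at most $c\,((c_0 -1)L)^{-\nu}$. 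By choosing $c_0$ large enough, depending only on the constants in (\ref{1.9}), this becomes a definite fraction of $g(y,z) \ge c' L^{-\nu}$, giving $g_U(y,z) \ge c\, L^{-\nu}$ uniformly for $y,z \in B(x, L)$.

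Second, I would convert this into a Poisson kernel comparison. Using the last-passage identity
\[
P_y[X_{T_U} = w] = \sum_{w' \in U,\, w' \sim w} g_U(y, w')\, p_{w',w}, \qquad w \in \partial U,
\]
and noting that for any such $w'$ the distances $d(y, w')$ and $d(z, w')$ are of order $c_0 L$ for all $y,z \in B(x, L)$, the two-sided bounds on $g_U$ from the first step yield an absolute constant $C > 0$ with
\[
\frac{1}{C} \le \frac{P_y[X_{T_U} = w]}{P_z[X_{T_U} = w]} \le C,
\]
uniformly in $y,z \in B(x, L)$ and $w \in \partial U$. Finally, combining this with the optional-stopping representation $v(y) = E_y[v(X_{T_U})] = \sum_w P_y[X_{T_U} = w]\, v(w)$, which is valid for the non-negative harmonic function $v$ on $\overline{B(x, c_0 L)}$, immediately gives $v(y) \le C\, v(z)$ and therefore (\ref{1.14}).

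The main obstacle is the lower bound on $g_U$ in the first step. One must exploit the geometric separation between $B(x, L)$ and $\partial B(x, c_0 L)$ in the metric $d(\cdot,\cdot)$, which mixes horizontal distances on $G$ with $\beta$-scaled vertical distances; ensuring that the subtracted boundary term is strictly dominated by $g(y,z)$ uniformly for $y,z \in B(x,L)$ is precisely what pins down the choice of $c_0$, and it is here that both (\ref{1.9}) and the Ahlfors-type volume bound (\ref{1.11}) enter essentially.
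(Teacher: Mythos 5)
Your Step 1 is sound, but Step 2 contains a genuine gap. The last-exit identity reduces the comparison of $P_y[X_{T_U}=w]$ and $P_z[X_{T_U}=w]$ to comparing $g_U(y,w')$ and $g_U(z,w')$ for $w'\in\partial_{\rm int}U$, i.e.\ for points \emph{adjacent to the killing boundary} of $U=B(x,c_0L)$ -- and for such pairs the two-sided bounds you derived in Step 1 simply do not apply (they were proved only for both arguments in $B(x,L)$, where the subtracted term $E_y[g(X_{T_U},z)]$ is negligible against $g(y,z)$). For $w'$ next to $\partial U$ the decomposition $g_U(y,w')=g(y,w')-E_y[g(X_{T_U},w')]$ gives no useful lower bound, because $X_{T_U}$ can be a neighbour of $w'$; in fact no bound of the form $g_U(y,w')\ge c\,(c_0L)^{-\nu}$ uniform in $w'\in\partial_{\rm int}U$ is true: $g_U(y,w')=g_U(w',y)$ carries an extra small factor, of the order of the probability that the walk from $w'$ penetrates to depth $L$ before exiting $U$, which degenerates as $L\to\infty$ and depends on $w'$. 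What you actually need is that this boundary factor \emph{cancels in the ratio} $g_U(y,w')/g_U(z,w')$, and that is itself a Harnack-type statement which your Step 1 does not deliver. (The missing factor $\rho(w')$ in your last-exit identity is, by contrast, harmless since the weights are bounded above and below.)

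The standard repair -- and the route the paper takes -- is to interpose an intermediate sphere. Lemma A.2 of \cite{Szni09d}, which the paper invokes, bounds the Harnack constant for $U_1=B(x,L)$ inside $U_3=B(x,c_0L)$ by $\max g_{U_3}(\wt{x},x')/g_{U_3}(\ov{x},x')$ taken over $\wt{x},\ov{x}\in U_1$ and $x'\in\partial_{\rm int}U_2$ with $U_2=B(x,3L)$; equivalently, one conditions the exiting walk on its passage through $\partial_{\rm int}B(x,3L)$ rather than comparing Poisson kernels directly on $\partial U_3$. At such intermediate points $x'$, which are at distance of order $L$ from $U_1$ and of order $(c_0-3)L$ from $\partial U_3$, your own subtraction estimate applies verbatim: $g_{U_3}(\wt{x},x')\le g(\wt{x},x')\le cL^{-\nu}$ and $g_{U_3}(\ov{x},x')\ge cL^{-\nu}-c((c_0-3)L)^{-\nu}\ge c'L^{-\nu}$ once $c_0$ is large, yielding (\ref{1.14}). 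So your estimates are the right ones, but they must be evaluated on an intermediate boundary, not on $\partial_{\rm int}B(x,c_0L)$; as written, the passage from Step 1 to the Poisson kernel comparison is unjustified. (Also, contrary to your closing remark, the volume bound (\ref{1.11}) plays no essential role here; only (\ref{1.9}) and the cited lemma are needed.)
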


\begin{proof}
We define $U_i = B(x,L_i)$, for $i = 1,2,3$, where $L_1 = L$, $L_2 = 3L$, and $L_3 \ge L_2$ so that $U_1 \subseteq U_2 \subseteq U_3$. It follows from Lemma A.2 of \cite{Szni09d} that when $v'$ is a non-negative function on $\ov{U}_3$ harmonic in $U_3$, one has
\begin{equation}\label{1.15}
\max\limits_{U_1} \;v \le K \,\min\limits_{U_1} \, v\,.
\end{equation}
where
\begin{equation}\label{1.16}
K = \max\limits_{\wt{x},\ov{x} \in U_1} \;\max\limits_{x' \in \partial_{\rm int} U_2} \, g_{U_3}(\wt{x},x') / g_{U_3}(\ov{x},x')\,,
\end{equation}

\n
and $g_{U_3}(\cdot,\cdot)$ denotes the killed Green density:
\begin{equation}\label{1.17}
g_{U_3}(\wt{x},x') = E_{\wt{x}}\Big[\dsl_{k \ge 0} 1\{X_k = x', \,T_{U_3} > k\}\Big] / \rho(x'), \;\mbox{for $\wt{x},x'$ in $E$}\,.
\end{equation}

\n
Applying the strong Markov property at time $T_{U_3}$ we find that
\begin{equation}\label{1.18}
g_{U_3}(\wt{x},x') = g(\wt{x},x') - E_{\wt{x}}[g(X_{T_{U_3}}, x')], \;\mbox{for $\wt{x},x'$ in $E$}\,.
\end{equation}

\medskip\n
As a result when $\wt{x},\ov{x}$ belong to $U_1$ and $x'$ to $\partial_{\rm int} U_2$, it follows from (\ref{1.9}) that
\begin{align}
g_{U_3}(\wt{x},x') & \le g(\wt{x},x') \le c(L_2 - L_1 - 1)^{-\nu} \le c \,L^{-\nu}, \label {1.19}
\\[1ex]
g_{U_3}(\ov{x},x') & \ge c\,L_2^{-\nu} - c(L_3 - L_2)^{-\nu} \ge c\,L^{-\nu}, \; \mbox{when} \; L_3 = c_0 L\,.\label {1.20}
\end{align}

\n
The claim (\ref{1.14}) now follows from (\ref{1.15}), (\ref{1.16}), together with $B(x,c_0 L) = U_3$ and $B(x,L) = U_1$.
\end{proof}

\medskip
We now recall some facts concerning the equilibrium measure and the capacity of a finite subset of $E$. Given $K \subset \subset E$, (see below (\ref{1.1}) for the notation) we write $e_K$ for the equilibrium measure of $K$ and ${\rm cap}(K)$ for its total mass, the capacity of $K$:
\begin{equation}\label{1.21}
\begin{split}
e_K(x) & = P_x[\wt{H}_K = \infty] \,\rho(x) \,1_K(x), \;\mbox{for $x \in E$}\,,
\\[1ex]
{\rm cap}(K) & = e_K(E) = \dsl_{x \in K} \,P_x[\wt{H}_K = \infty] \,\rho(x)\,.
\end{split}
\end{equation}

\n
The subadditive property of the capacity easily follows from (\ref{1.21}):
\begin{equation}\label{1.22}
{\rm cap}(K \cup K') \le {\rm cap}(K) + {\rm cap}(K'), \;\mbox{for $K,K' \subset \subset E$}\,,
\end{equation}

\n
and classically, one can express the probability that the walk enters $K$ via:
\begin{equation}\label{1.23}
P_x[H_K < \infty] = \dsl_{x' \in K} \,g(x,x') \,e_K(x'), \;\mbox{for $x \in E$}\,.
\end{equation}

\n
The following lemma collects further useful facts.
\begin{lemma}\label{lem1.3}
\begin{equation}\label{1.24}
\begin{array}{l}
{\rm cap}(\{x\}) = g(x,x)^{-1}, 
\\[1ex]
{\rm cap}(\{x,x'\}) = \dis\frac{g(x,x) + g(x',x') - 2g(x,x')}{g(x,x) g(x',x') - g(x,x')^2} \,, \;\mbox{for $x \not= x'$ in $E$}\,.
\end{array}
\end{equation}

\n
For $K \subset \subset E$, $x \in E$, one has:
\begin{equation}\label{1.25}
\begin{array}{l}
\dsl_{x' \in K} \,g(x,x') / \sup\limits_{\ov{x} \in K} \; \dsl_{x' \in K} \,g(\ov{x},x') \le P_x[H_K < \infty] \le
\\[3ex]
\dsl_{x' \in K} \,g(x,x') / \inf\limits_{\ov{x} \in K} \; \dsl_{x' \in K} \,g(\ov{x},x') \,.
\end{array}
\end{equation}
\begin{equation}\label{1.26}
c\,L^\nu \le {\rm cap}(B(x,L)) \le c' L^\nu, \;\mbox{for $x \in E$, $L \ge \fr$}\,.
\end{equation}
\end{lemma}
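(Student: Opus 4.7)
The statement bundles three standard capacity facts; I would address them in sequence.

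For (\ref{1.24}), the singleton formula follows from the geometric distribution of the number of visits to $x$: the identity $\rho(x)g(x,x) = 1/P_x[\wt{H}_x = \infty]$ combined with (\ref{1.21}) gives ${\rm cap}(\{x\}) = 1/g(x,x)$ at once. For the pair capacity, I would plug $x$ and $x'$ into (\ref{1.23}); since both lie in $K = \{x,x'\}$, both hitting probabilities equal $1$, producing the $2 \times 2$ linear system
\begin{equation*}
g(x,x) e_K(x) + g(x,x') e_K(x') = 1, \qquad g(x',x) e_K(x) + g(x',x') e_K(x') = 1,
\end{equation*}
whose resolution by Cramer's rule and summation of the two entries yields the displayed quotient.

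For (\ref{1.25}), the route is the standard sweeping identity. Strong Markov at $H_K$ gives, for every $y \in K$,
\begin{equation*}
g(x,y) = E_x[g(X_{H_K}, y); H_K < \infty],
\end{equation*}
because a visit to $y \in K$ can only occur at or after $H_K$. Summing over $y$ and writing $F(\bar x) = \sum_{y \in K} g(\bar x, y)$, I obtain $F(x) = E_x[F(X_{H_K}); H_K < \infty]$, and sandwiching $F(X_{H_K})$ between $\inf_{\bar x \in K} F$ and $\sup_{\bar x \in K} F$ produces the two inequalities in (\ref{1.25}).

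For (\ref{1.26}), I specialise (\ref{1.23}) to the centre $x_0$ of $K = B(x_0, L)$, which yields
\begin{equation*}
1 = \sum_{y \in K} g(x_0, y)\, e_K(y).
\end{equation*}
The upper bound ${\rm cap}(K) \le cL^\nu$ is immediate: every $y \in K$ satisfies $d(x_0, y) \le L$, so the lower Green estimate in (\ref{1.9}) yields $g(x_0, y) \ge cL^{-\nu}$, whence $1 \ge cL^{-\nu}\, {\rm cap}(K)$. The lower bound ${\rm cap}(K) \ge cL^\nu$ is more subtle: here I would exploit the fact that $e_K$ is supported on $\partial_{\rm int} K$, since any $y \in K$ all of whose neighbours lie in $K$ satisfies $\wt{H}_K \le 1$ almost surely and hence $e_K(y) = 0$. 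For $y \in \partial_{\rm int} K$ a neighbour $y' \in K^c$ with $d(y,y') = 1$ gives $d(x_0, y) \ge d(x_0, y') - 1 > L - 1$, so the upper bound in (\ref{1.9}) gives $g(x_0, y) \le c' L^{-\nu}$, and the identity above becomes $1 \le c' L^{-\nu}\, {\rm cap}(K)$.

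The main obstacle I anticipate is recognising that the lower bound in (\ref{1.26}) must use the boundary-support of $e_K$ rather than a uniform Green bound; a crude estimate $g(x_0, y) \le g(x_0, x_0)$ would yield only ${\rm cap}(K) \ge 1/g(x_0, x_0)$, independent of $L$, while subadditivity (\ref{1.22}) combined with the volume growth (\ref{1.11}) only gives the far too weak upper bound ${\rm cap}(K) \le c L^{\alpha + \beta/2}$. The degenerate small-scale range $L \in [\fr, 2)$, where the inequality $L - 1 \ge cL$ fails, is absorbed into the constants since both ${\rm cap}(K)$ and $L^\nu$ then lie in a fixed compact subset of $(0, \infty)$.
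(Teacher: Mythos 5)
Your proposal is correct; (\ref{1.24}) and (\ref{1.25}) follow essentially the paper's own route (the paper determines the weights $\gamma_x,\lambda_x,\lambda_{x'}$ of $e_{\{x\}}$ and $e_{\{x,x'\}}$ by plugging into (\ref{1.23}) with hitting probability $1$ on $K$, exactly your linear system, and obtains the identity $\sum_{x'\in K}g(x,x')=E_x[1\{H_K<\infty\}\sum_{x'\in K}g(X_{H_K},x')]$ via $L^1$-convergence of the bounded martingale $\sum_{x'\in K}g(X_{n\wedge H_K},x')$ rather than your direct strong Markov computation, a cosmetic difference). For (\ref{1.26}), however, your argument genuinely differs from the paper's: the paper reuses the identity $g(x,x')=E_x[g(X_{H_B},x');H_B<\infty]$ with $x'$ the centre and $x\notin B$ to get $c(L/d(x,x'))^\nu\le P_x[H_B<\infty]\le c'(L/d(x,x'))^\nu$, then combines this with (\ref{1.23}) and lets $x\to\infty$ so that $g(x,\bar x)\sim d(x,x')^{-\nu}$ uniformly over $\bar x\in B$, which isolates ${\rm cap}(B)$; you instead evaluate (\ref{1.23}) at the centre, getting $1=\sum_{y}g(x_0,y)e_K(y)$, and exploit that $e_K$ charges only $\partial_{\rm int}B(x_0,L)$, where $d(x_0,y)>L-1$ (note that in the metric (\ref{1.10}) every edge of $E$ has $d$-length exactly $1$, so your triangle-inequality step is legitimate), so the two-sided Green estimate (\ref{1.9}) applied at distance $\le L$ and $>L-1$ respectively gives both bounds. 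Your version is more local and avoids the limiting argument, at the price of the boundary-support observation; the paper's version avoids any discussion of where $e_K$ lives and recycles the martingale identity already set up for (\ref{1.25}). Both are complete, and your handling of the small-$L$ range by uniform upper and lower bounds on $g(x,x)$ and the volume bound (\ref{1.11}) is the standard way to absorb it into the constants.
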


\begin{proof}
The claim (\ref{1.24}) follows by writing $e_{\{x\}} = \gamma_x \,\delta_x$ and $e_{\{x,x'\}} = \lambda_x \delta_x + \lambda_{x'} \delta_{x'}$, with $\gamma_x, \lambda_x, \lambda_{x'} \ge 0$, and then using (\ref{1.23}) together with the identity $P_{\ov{x}}[H_K < \infty] = 1$, when $\ov{x} \in K$, to determine $\gamma_x, \lambda_x, \lambda_{x'}$.

\medskip
The bound (\ref{1.25}) follows in a classical fashion from the $L^1(P_x)$-convergence of the bounded martingale $M_n = \sum_{x' \in K} g(X_{m \wedge H_K},x')$, $n \ge 0$, towards $1\{H_K < \infty\}$ $\sum_{x' \in K}$ $g(X_{H_K},x')$, and the resulting equality of the $P_x$-expectation of this quantity with $\sum_{x' \in K}$ $g(x,x')$.

\medskip
As for (\ref{1.26}), a variation of the above argument yields the identity 
\begin{equation*}
g(x,x') = E_x[g(X_{H_B},x'), \,H_B < \infty], \;\mbox{for $x,x'$ in $E$ and $B = B(x',L)$}\,.
\end{equation*}

\n
By (\ref{1.9}) we thus see that for $x \notin B$ one has
\begin{equation*}
c(L/d(x,x'))^\nu \le P_x [H_B < \infty] \stackrel{(\ref{1.23})}{=} \dsl_{\ov{x} \in B} \,g(x,\ov{x}) \,e_B(\ov{x}) \le c'(L/d(x,x'))^\nu\,.
\end{equation*}

\n
Letting $x$ tend to infinity the claim (\ref{1.26}) now follows from (\ref{1.9}).
\end{proof}

We now collect some properties of random interlacements on the transient weighted graph $(E,\rho)$, which will be useful in the following sections. Random interlacements on $E$ are defined on a canonical space $(\Omega, \cA, \IP)$.  On this space a certain Poisson point process on $W^* \times \IR_+$, (with $W^*$ the space of doubly infinite $E$-valued trajectories tending to infinity at positive and negative infinite times, modulo time-shift), having intensity measure $\nu(dw^*) du$, can be constructed. We refer to Section 2 of Teixeira \cite{Teix09b}, and also to Remark 1.4 of \cite{Szni10a}, for the precise definition of this probability space and of the $\sigma$-finite measure $\nu(dw^*)$. For the purpose of the present work we only need to recall that one can define on $(\Omega, \cA, \IP)$ some families of Poisson point processes on the space $W_+$, see above (\ref{1.12}) for the notation, namely $\mu_{K,u}(dw)$ and $\mu_{K,u',u}(dw)$, for $K \subset \subset E$, and $u,u' \ge 0$, with $u' < u$, so that
\begin{equation}\label{1.27}
\begin{array}{l}
\mbox{$\mu_{K,u',u}$ and $\mu_{K,u'}$ are independent with respective intensity measures}
\\
\mbox{$(u-u') P_{e_K}$ and $u' P_{e_K}$},
\end{array}
\end{equation}
and
\begin{equation}\label{1.28}
\mu_{K,u} = \mu_{K,u'} + \mu_{K,u',u}\,.
\end{equation}

\n
In essence $\mu_{K,u}$, resp.~$\mu_{K,u',u}$, keep track of the part after entrance in $K$ of the doubly infinite trajectories modulo time-shift, with labels at most $u$, resp.~with labels in $(u',u]$, belonging to the canonical Poisson cloud, that do enter $K$. These finite point measures on $W_+$ satisfy certain compatibility relations, the {\it sweeping identities:}
\begin{equation}\label{1.29}
\mu_{K,u} = \dsl^m_{i=1}  1\{H_K(w_i) < \infty\}\,\delta_{\theta_{H_K(w_i)}}, \;\mbox{if} \; \mu_{K',u} = \dsl^m_{i=1}  \,\delta_{w_i}, \;\mbox{for} \; K \subset K' \subset \subset E, u \ge 0\,,
\end{equation}

\n
together with the analogous relations for $\mu_{K,u',u}$ and $\mu_{K',u',u}$, $0 \le u' < u$. We refer to (1.13) - (1.15) of \cite{SidoSzni09b} or to (1.18) - (1.21) and Proposition 1.3 of \cite{Szni10a} for more details.

\medskip
Given $\o \in \Omega$, the random interlacement at level $u \ge 0$ is the random subset of $E$ defined by
\begin{equation}\label{1.30}
\begin{array}{l}
\cI^u(\o) = \bigcup\limits_{K \subset \subset E} \;\bigcup\limits_{w \in {\rm Supp} \,\mu_{K,u}(\o)} {\rm range}(w)\,,
\end{array}
\end{equation}

\n
where the notation ${\rm Supp} \,\mu_{K,u}(\o)$ refers to the support of the finite point measure $\mu_{K,u}(dw)$ and ${\rm range}(w) = w(\IN)$ for $w \in W_+$. The vacant set at level $u$ is then defined as
\begin{equation}\label{1.31}
\cV^u(\o) = E \backslash \cI^u(\o), \;\mbox{for} \;\o \in \Omega, \,u \ge 0\,.
\end{equation}

\n
As a straightforward consequence of the compatibility relations (\ref{1.29}), see for instance (1.54) of \cite{Szni10a}, one finds that:
\begin{equation}\label{1.32}
\begin{array}{l}
\cI^u(\o) \cap K = \bigcup\limits_{w \in {\rm Supp} \,\mu_{K',u}(\o)} w(\IN) \cap K, \;\mbox{for any}\; K \subset K' \subset \subset E, \, u \ge 0, \, \o \in \Omega\,.
\end{array}
\end{equation}

\n
It also readily follows from (\ref{1.27}) that
\begin{equation}\label{1.33}
\IP[\cV^u \supseteq K] = \exp\{- u \,{\rm cap}(K)\}, \;\mbox{for all} \; K \subset \subset E, \, u \ge 0\,.
\end{equation}

\n
This identity determines the law $Q_u$ on $\{0,1\}^E$ of $\chi_{\cV^u}$, the indicator function of $\cV^u$, see Remark 2.2 2) of \cite{Szni10a}, or Remark 2.3. of \cite{Teix09b}. The law $Q_u$ satisfies the FKG Inequality, see Theorem 3.1 of \cite{Teix09b}: when $f,g$ are increasing, $\sigma(\Psi_x,x \in E$)-measurable functions on $\{0,1\}^E$, (with $\Psi_x$, $x \in E$, the canonical coordinates on $\{0,1\}^E$), which are $Q_u$-square integrable, then
\begin{equation}\label{1.34}
E^{Q_u} [fg] \ge E^{Q_u}[f] \,E^{Q_u} [g]\,.
\end{equation}

\n
The following notation will be quite handy and recurrently used in the sequel. Given $\cJ(\o)$ a random subset of $E$, and $A$ a subset of $\{0,1\}^E$, we define
\begin{equation}\label{1.35}
A(\cJ) = \{\o \in \Omega; \; \chi_{\cJ(\o)} \in A\}\,,
\end{equation}
and write for $u \ge 0$:
\begin{equation}\label{1.36}
A^u \stackrel{\rm def}{=} A(\cI^u)\,.
\end{equation}

\n
The following lemma attests the presence of a long range dependence in random interlacements, and states a zero-one law for the occurrence of an infinite cluster in $\cV^u$. We recall that $\nu = \alpha - \frac{\beta}{2} > 0$, see (\ref{1.9}).

\begin{lemma}\label{lem1.4}  ($u > 0, x,x'$ in $E$)
\begin{align}
& \IP[x \in \cV^u] = \exp\Big\{ - \dis\frac{u}{g(x,x)}\Big\}\,, \label{1.37}
\\[1ex]
& c(u) d(x,x')^{-\nu} \le {\rm cov}_\IP (1 \{x \in \cV^u\}, \,1\{x' \in \cV^u\}) \le c' (u) d(x,x')^{-\nu}, \; \mbox{for $x \not= x'$}\,. \label{1.38}
\\[1ex]
&\mbox{$\IP[\,\cV^u$ contains an infinite connected component$\,] \in \{0,1\}$}\,. \label{1.39}
\end{align}
\end{lemma}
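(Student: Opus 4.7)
My plan is to prove the three assertions in order. For (\ref{1.37}), I would specialize the characterizing identity (\ref{1.33}) to $K = \{x\}$: this gives $\IP[x \in \cV^u] = \IP[\cV^u \supseteq \{x\}] = \exp(-u\,{\rm cap}(\{x\}))$, and the first line of (\ref{1.24}) identifies ${\rm cap}(\{x\}) = 1/g(x,x)$, yielding the formula.

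For (\ref{1.38}), the starting point is the identity
\begin{equation*}
{\rm cov}_\IP\bigl(1\{x \in \cV^u\},\, 1\{x' \in \cV^u\}\bigr) \;=\; e^{-u\,{\rm cap}(\{x,x'\})} - e^{-u(1/g(x,x) + 1/g(x',x'))},
\end{equation*}
which follows from (\ref{1.33}) applied to $K = \{x,x'\}$ together with (\ref{1.37}). Factoring out the larger exponential rewrites this as $e^{-u(1/a + 1/b)}(e^{u\delta} - 1)$ with $a = g(x,x)$, $b = g(x',x')$, $\gamma = g(x,x')$ and $\delta = 1/a + 1/b - {\rm cap}(\{x,x'\})$; feeding the closed form from the second line of (\ref{1.24}) into $\delta$ yields after a short manipulation
\begin{equation*}
\delta \;=\; \frac{\gamma\,\bigl(2ab - (a+b)\gamma\bigr)}{ab\,(ab - \gamma^2)}.
\end{equation*}
By (\ref{1.9}), $a$ and $b$ lie in a fixed compact subinterval of $(0,\infty)$, while $\gamma$ is two-sided comparable to $(d(x,x') \vee 1)^{-\nu}$; the walk identity $g(x,x') = P_x[H_{x'} < \infty]\,g(x',x')$ and its symmetric version further force $\gamma \le \min(a,b)$. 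From these inputs one extracts the two-sided estimate $\delta \asymp d(x,x')^{-\nu}$ with graph-dependent constants, and then (\ref{1.38}) will follow because the prefactor $e^{-u(1/a+1/b)}$ is uniformly bounded above by $1$ and below by a positive constant depending on $u$, while $e^{u\delta} - 1 \asymp u\delta$ on the bounded range where $u\delta$ lives.

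For (\ref{1.39}), the plan is to invoke a zero-one law from ergodicity of the vertical shift $\tau: (y,z) \mapsto (y, z+1)$. Since $\tau$ is an automorphism of $(E,\rho)$, it lifts to a $\IP$-preserving transformation of $(\Omega, \cA, \IP)$: the intensity measure $\nu$ on $W^*$ from \cite{Teix09b} is invariant under every graph-automorphism. The event in question is manifestly $\tau$-invariant, so it suffices to establish ergodicity. I would do this by checking mixing on a generating family of cylinder events: for $F, F'$ depending on $\chi_{\cV^u}$ in bounded windows, one has $d(x, \tau^n x) = n^{2/\beta} \to \infty$, so the covariance bound (\ref{1.38}) combined with the Poisson structure of the interlacement and a standard monotone-class approximation yields $\IP[F \cap \tau^n F'] \to \IP[F]\,\IP[F']$. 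Mixing implies ergodicity, and hence $\IP[A] \in \{0,1\}$. This is essentially the ergodicity underlying Corollary~3.2 of \cite{Teix09b}, which can be cited directly in place of the argument just sketched.

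The principal technical nuisance I foresee is the lower bound on $\delta$ in the short-distance regime $d(x,x') = O(1)$: the explicit formula for $\delta$ shows $\delta > 0$ for $x \neq x'$, but extracting a uniform positive lower bound forces one to know that $\gamma/\sqrt{ab}$ is bounded away from $1$ uniformly, equivalently that $P_x[H_{x'} < \infty]$ is uniformly strictly smaller than $1$ over all pairs at bounded distance. This is the point where one genuinely uses transience together with the quantitative, two-sided character of (\ref{1.9}); the large-distance regime, where $\delta \sim 2\gamma/(ab)$, is routine, and the ergodicity argument for (\ref{1.39}) introduces no new obstacle beyond (\ref{1.38}).
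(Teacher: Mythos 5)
Your outline follows the paper's proof quite closely: (\ref{1.37}) from (\ref{1.33}) and (\ref{1.24}); the covariance computed exactly as in (\ref{1.40}), with the large-distance regime read off from (\ref{1.9}) and the bounded-distance regime reduced to the statement that $g(x,x')/g(x',x')$ is uniformly bounded away from $1$, which is precisely the paper's (\ref{1.41}); and (\ref{1.39}) from invariance and ergodicity of the $\IZ$-shift. Your algebra for $\delta$ is correct. Two steps of your sketch, however, lean on insufficient inputs. First, the uniform bound $P_x[H_{x'}<\infty]=g(x,x')/g(x',x')\le \wt c<1$ over all pairs at bounded distance does \emph{not} follow from ``transience together with the two-sided character of (\ref{1.9})'': (\ref{1.9}) only gives $g(x,x')\le c'$ and $g(x',x')\ge c$, which does not force the ratio below $1$, and in a general bounded-degree reversible transient graph one can even have $P_x[H_{x'}<\infty]=1$ for neighbours. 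The paper's argument uses the specific product structure of $E=G\times\IZ$: by uniform ellipticity (the vertical edges have weight $1$ and $\rho$ is bounded) the walk started at $x$ escapes a bounded number of steps in the $\IZ$-direction with probability bounded below, after which (\ref{1.9}) makes the hitting probability of $x'$ small; this escape-route step is the missing ingredient you should supply.

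Second, for the zero-one law, the single-site covariance bound (\ref{1.38}) plus a monotone-class argument does not yield mixing: cylinder events such as $\{x\in\cV^u,\,x''\in\cV^u\}$ are not controlled by pairwise covariances. What is needed is a decorrelation estimate for general bounded local functions of the interlacement, i.e.\ the bound (\ref{1.42}) of Remark \ref{rem1.5} 1) (obtained from the decomposition of the Poisson point process into trajectories meeting the two sets, as in Theorem 2.1 of \cite{Szni10a}); your phrase ``Poisson structure of the interlacement'' points in this direction but the estimate you actually invoke is the wrong one. Also, Corollary 3.2 of \cite{Teix09b} cannot be ``cited directly in place of the argument'': it asserts that the positivity of $\eta(x,u)$, hence $u_*$, does not depend on the base point in a general transient weighted graph (where no translations are available), not a zero-one law for the existence of an infinite cluster. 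With (\ref{1.41}) proved via the vertical escape route and the mixing step run through (\ref{1.42}), your plan becomes the paper's proof.
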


\begin{proof}
The identity (\ref{1.37}) directly follows from (\ref{1.24}), (\ref{1.33}). Similarly the covariance in (\ref{1.38}) is equal to, (writing $g_x$ in place of $g(x,x)$):
\begin{equation}\label{1.40}
\exp\{ - u (g_x^{-1} + g_{x'}^{-1})\}  \Big(\exp\Big\{u \Big[\dis\frac{g_x + g_{x'}}{g_x g_{x'}} - \dis\frac{g_x + g_{x'} - 2 g(x,x')}{g_x g_{x'} - g(x,x')^2} \Big]\Big\} - 1\Big)\,.
\end{equation}

\n
The expression inside the square bracket is non-negative as a result of (\ref{1.24}) and the subadditivity property of the capacity, see (\ref{1.22}). The claim (\ref{1.38}) when $d(x,x') > c(u)$ is a simple consequence of (\ref{1.9}). The case $0 < d (x,x') \le c(u)$ follows from the inequality
\begin{equation}\label{1.41}
g(x,x') / g_{x'} \le \wt{c} < 1, \;\mbox{(and similarly for $g(x',x)/g_x)$}\,,
\end{equation}

\n
which straightforwardly implies that the expression in the square bracket remains uniformly bounded away from zero for such $x,x'$. To prove (\ref{1.41}) one simply notes that $P_x [H_{x'} < \infty] = g(x,x') / g_{x'}$, and uses an escape route from $x'$ for the walk starting at $x$, which first moves in the $\IZ$-direction and then takes advantage of (\ref{1.9}) to show that the above probability is smaller than $\wt{c} < 1$.

\medskip
Let us finally prove (\ref{1.39}). When $z \in \IZ$, the translation $t_z$ in the $\IZ$-direction on $\{0,1\}^E$ is defined for $\sigma \in \{0,1\}^E$ via $t_z(\sigma) = \sigma(x+z)$, for $x \in E$, where $x + z$ is the natural translation of $x$ by $z$ in $E = G \times \IZ$. As a direct consequence of (\ref{1.33}) and the fact that ${\rm cap}(K+z) = {\rm cap}(K)$ for any $K \subset \subset E$, we see that $t_z, z \in \IZ$, preserves $Q_u$. Similar arguments as in the proof of Theorem 2.1 of \cite{Szni10a}, see in particular (2.6), (2.15) of \cite{Szni10a}, show that the group $t_z, z \in \IZ$ of $Q_u$-preserving transformations is ergodic, (see also (\ref{1.42}) below). The claim (\ref{1.39}) readily follows. 
\end{proof}

\begin{remark}\label{rem1.5} \rm ~

\medskip\n
1) The arguments used in the proof of (\ref{2.15}) of \cite{Szni10a} show that when $K,K'$ are disjoint finite subsets of $E$, and $F,F'$ are bounded measurable functions on the set of finite point measures on $W_+$, then for $u \ge 0$, one has
\begin{equation}\label{1.42}
|{\rm cov}_{\IP} (F(\mu_{K,u}), \,F'(\mu_{K',u}))| \le c\,u \; \dis\frac{{\rm cap}(K)\,{\rm cap}(K')}{d(K,K')^\nu} \;\|F\|_\infty \, \|F'\|_\infty\,,
\end{equation}

\n
with $d(K,K')$ as above (\ref{1.11}), and $\|F\|_\infty$, $\|F'\|_\infty$ the respective sup-norms of $F$ and $F'$.

\bigskip\n
2) In the case $E = \IZ^{d+1}$, $d \ge 2$, the definition of the capacity in (\ref{1.21}) differs by a multiplicative factor $2(d+1)$ from the definition in \cite{Szni10a}, due to the presence of $\rho(\cdot)$. The convention used in \cite{Szni10a} correspond to assigning a weight $(2d+2)^{-1}$ to each edge of $\IZ^{d+1}$, so that the volume measure coincides with the counting measure. This multiplicative factor leads to the (in most respect unessential) fact that random interlacement at level $u$ in the present work correspond to random interlacements at level $2(d+1) u$ in the terminology of \cite{Szni10a}, \cite{SidoSzni09a}, \cite{SidoSzni09b}.

\bigskip\n
3) In the context of Bernoulli percolation two inequalities play an important role in the development of the theory, the BK Inequality and the FKG Inequality, see for instance \cite{Grim99}. We have seen in (\ref{1.34}) that the FKG Inequality holds for $Q_u$. However it follows from (\ref{1.38}) that for $x \not= x'$ the events $\{x \in \cV^u\}$ and $\{x' \in \cV^u\}$ are correlated, in fact positively correlated, when $u > 0$. As a result the BK Inequality does not hold for $Q_u$. The decoupling inequalities of Theorem \ref{theo2.6} in the following section play the role of a partial substitute for the absence of a BK Inequality. \hfill $\square$
\end{remark}

\section{Decoupling inequalities and the sprinkling \\technique}
\setcounter{equation}{0}

In this section we implement a certain renormalization scheme. The goal is to derive decoupling inequalities for the probability of intersections of $2^n$ events, which are either all increasing or all decreasing, and respectively pertain to the state of random interlacements on $E$ at a certain level, in $2^n$ boxes of typical length $L_0$. These boxes should be thought of as the bottom leaves of a dyadic tree of depth $n$, and a geometrically increasing sequence of length scales, $L_n = \ell_0^n \,L_0$, $n \ge 0$, is used to quantify the fact that these boxes are well spread out. The key idea to overcome the presence of long range interactions in the model is the sprinkling technique. Roughly speaking one slightly increases the level so as to throw in more trajectories and dominate terms containing interactions between distant boxes at a given scale. The fashion in which we conduct the sprinkling technique is new, even in the case of $E = \IZ^{d+1}$, $d \ge 2$, and differs markedly from \cite{Szni10a}, \cite{SidoSzni09a},  \cite{SidoSzni09b}. The key induction step appears in Theorem \ref{theo2.1} and the crucial decoupling inequalities are presented in Theorem \ref{theo2.6}. In the important special case $E = \IZ^{d+1}$, $d \ge 2$, one can replace the distance $d(\cdot,\cdot)$ with the sup-norm distance $d_\infty(\cdot,\cdot)$ and the balls $B(x,r)$ with the corresponding balls $B_\infty(x,r)$ throughout the article, simply adapting constants when necessary.

\medskip
We introduce a geometrically growing sequence of length scales, (we recall that  $c_0 > 1$ appears in Lemma \ref{lem1.2}):
\begin{equation}\label{2.1}
L_n = \ell^n_0 \,L_0, \; \mbox{for $n \ge 0$, where $L_0 \ge 1$ and $\ell_0 \ge 10^6 c_0$}\,.
\end{equation}

\n
We want to derive upper bounds on the probability of intersections of events, which occur in boxes of size $L_0$, which are well spread-out. Specifically this last feature will bring into play certain embeddings of dyadic trees of finite depth $n$, which we now describe.

\medskip
Given $n \ge 0$, we denote with $T_n = \bigcup_{0 \le k \le n} \{1,2\}^k$, the canonical dyadic tree of depth $n$ and with $T_{(k)} = \{1,2\}^k$, the collection of elements of the tree at depth $k$. We write $\Lambda_n$ for the set of {\it embeddings} of $T_n$ in $E$, that is~of maps $\cT: T_n \r E$, such that defining
\begin{equation}\label{2.2}
x_{m, \cT} = \cT(m), \; \wt{C}_{m, \cT} = B(x_{m, \cT}, \, 10L_{n-k}), \;\mbox{for} \; m \in T_{(k)}, \; 0 \le k \le n\,,
\end{equation}

\n
one has for any $0 \le k < n$, $m \in T_{(k)}$, and $m_1,m_2$ the two descendants of $m$ in $T_{(k+1)}$ obtained by respectively concatenating $1$ and $2$ to $m$:
\begin{align}
&\wt{C}_{m_1,\cT} \cup \wt{C}_{m_2,\cT} \subseteq \wt{C}_{m,\cT}\,,\label{2.3}
\\[1ex]
&d(x_{m_1,\cT}, x_{m_2,\cT}) \ge \mbox{\f $\dis\frac{1}{100}$} \;L_{n-k}\,. \label{2.4}
\end{align}

\n
Due to (\ref{2.1}) it follows that for each $0 \le k \le n$, the ``boxes at depth $k$'', $\wt{C}_{m,\cT}$, $m \in T_{(k)}$,  are pairwise disjoint.

\medskip
Given $n \ge 0$ and $\cT \in \Lambda_n$, a collection of measurable subsets of $\{0,1\}^E$ indexed by the ``leaves'' of $T_n$, $A_m$, $m \in T_{(n)}$,  is said $\cT${\it -adapted} when (see above (\ref{1.34}) for notation):
\begin{equation}\label{2.5}
\mbox{$A_m$ is $\sigma(\Psi_x, x \in \wt{C}_{m,\cT})$-measurable for each $m \in T_{(n)}$}\,.
\end{equation}

\n
We also recall that given $u \ge 0$, the events $A^u_m (\subseteq \Omega)$, $m \in T_{(n)}$, are defined in (\ref{1.36}).

\medskip
We still need some notation before stating the main induction step of the renormalization scheme we develop in this section. Given $n \ge 0$ and $\cT \in \Lambda_{n+1}$, we denote with $\cT_1, \cT_2 \in \Lambda_n$, the two embeddings of $T_n$ such that $\cT_i(m) = \cT((i,i_1,\dots,i_k))$ for $i=1,2$ and $m = (i_1,\dots,i_k)$ in $T_{(k)}$, $0 \le k \le n$. Loosely speaking $\cT_1$ and $\cT_2$ are the two natural embeddings corresponding to the respective restrictions of $\cT$ to the descendants of $1$ and of $2$ in $T_{n+1}$. Given a $\cT$-adapted collection $A_m$, $m \in T_{(n+1)}$, we then define two collections $A_{m,1}$, $m \in T_{(n)}$, and $A_{m,2}$, $m \in T_{(n)}$, respectively $\cT_1$- and $\cT_2$-adapted, as follows:
\begin{equation}\label{2.6}
\mbox{$A_{m,i} = A_{(i,i_1, \dots, i_n)}$, for $i = 1,2$ and $m = (i_1,\dots,i_n) \in T_{(n)}$}\,.
\end{equation}
We can now state the main induction step.

\begin{theorem}\label{theo2.1} $(K > 0$, $0 < \nu^\prime < \nu \stackrel{(\ref{1.9})}{=} \alpha - \frac{\beta}{2})$

\medskip
When $\ell_0 \ge c(K,\nu^\prime)$, then for all $n \ge 0$, $\cT \in \Lambda_{n+1}$, for all $\cT$-adapted collections $A_m, m \in T_{(n+1)}$, of decreasing events, respectively $B_m$, $m \in T_{(n+1)}$, of increasing events, on $\{0,1\}^E$, and for all $0 < u^\prime < u$ such that
\begin{equation}\label{2.7}
u \ge \big(1 + c_1 \sqrt{K} \;(n+1)^{-\frac{3}{2}} \,\ell_0^{-\frac{(\nu - \nu^\prime)}{2}}\big) \,u^\prime\,,
\end{equation}

\vspace{-1ex}\n
one has
\begin{equation}\label{2.8}
\begin{array}{l}
\quad \IP \Big[ \bigcap\limits_{m \in T_{(n+1)}}
 A^u_m\Big]  \le \IP \Big[\bigcap\limits_{\ov{m}_1 \in T_{(n)}} 
 A^u_{\ov{m}_1,1}\Big] \Big(\IP \Big[\bigcap\limits_{\ov{m}_2 \in T_{(n)}} 
 A^{u^\prime}_{\ov{m}_2,2}\Big] + 2 e^{-2u^\prime \, \frac{K}{(n+1)^3} \,L^\nu_n \, \ell_0^{\nu^\prime}}\Big)\,, 
 \end{array}
 \end{equation}
 
 \vspace{-1ex}\n
respectively,
\begin{equation} \label{2.9}
\begin{array}{l}
\IP \Big[ \bigcap\limits_{m \in T_{(n+1)}} 
B^{u'}_m\Big]  \le \IP \Big[\bigcap\limits_{\ov{m}_1 \in T_{(n)}} 
B^u_{\ov{m}_1,1}\Big]  \,\IP \Big[\bigcap\limits_{\ov{m}_2 \in T_{(n)}} 
B^u_{\ov{m}_2,2}\Big] + 2 e^{-2u^\prime \, \frac{K}{(n+1)^3} \,L^\nu_n \, \ell_0^{\nu^\prime}} \,.
\end{array}
\end{equation}
\end{theorem}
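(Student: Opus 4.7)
Let $K_i := \wt{C}_{i,\cT}$ for $i = 1,2$ be the two top boxes of the subtrees $\cT_1, \cT_2$. By (2.2)--(2.4) and (2.1), they are disjoint closed balls of radius $10 L_n$ with $d(K_1, K_2) \geq \frac{1}{100} L_{n+1} - 20 L_n \geq c \ell_0 L_n$ once $\ell_0$ is large enough. By (2.3) and (2.5), the intersections $A^\bullet_i := \bigcap_{\ov m_i \in T_{(n)}} A^\bullet_{\ov m_i, i}$ (and analogously $B^\bullet_i$) are $\sigma(\cI^\bullet \cap K_i)$-measurable. I treat the decreasing bound (2.8) in detail; (2.9) follows by a symmetric argument with the roles of $u$ and $u'$ exchanged.

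\textbf{Conditional estimate via coupling.} Set $\cF_1 := \sigma(\cI^u \cap K_1)$. It suffices to prove the almost-sure bound $\IE[1_{A_2^u} \mid \cF_1] \leq \IP[A_2^{u'}] + 2 e^{-2 u' K (n+1)^{-3} L_n^\nu \ell_0^{\nu'}}$, from which (2.8) follows by multiplying by $1_{A_1^u}$ and integrating. The plan is to construct, on a possibly enlarged probability space, a random set $\wh\cI \subseteq K_2$ with the properties (i) $\wh\cI$ is independent of $\cF_1$, (ii) $\wh\cI$ has the law of $\cI^{u'} \cap K_2$, and (iii) $\wh\cI \subseteq \cI^u \cap K_2$ on a good event $\cG$ with $\IP[\cG^c] \leq 2 e^{-2 u' K (n+1)^{-3} L_n^\nu \ell_0^{\nu'}}$. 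Because every $A_{\ov m_2, 2}$ is decreasing, (iii) forces $\chi_{\cI^u \cap K_2} \in A_{\ov m_2, 2} \Rightarrow \chi_{\wh\cI} \in A_{\ov m_2, 2}$ on $\cG$; conditioning on $\cF_1$ and using (i)--(ii) then yields the claim.

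\textbf{Construction.} By (1.27)--(1.28), write $\mu_{K_1 \cup K_2, u} = \mu_{K_1 \cup K_2, u'} + \mu_{K_1 \cup K_2, u', u}$ and further split each summand according to (first-entered box) $\times$ (does the trajectory visit the other box), producing eight jointly independent Poisson processes that together determine $\cI^u \cap (K_1 \cup K_2)$ via (1.32). The sub-processes whose trajectories never enter $K_1$ are $\cF_1$-independent. Starting from the $\cF_1$-independent, level-$u'$, $K_2$-based ``clean'' trajectories and adjoining independent auxiliary Poisson randomness matching the intensity deficit to $u' P_{e_{K_2}}$, one obtains a Poisson process of intensity $u' P_{e_{K_2}}$ whose trace on $K_2$ is $\wh\cI$; properties (i)--(ii) are immediate. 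Using (1.23), (1.9), (1.26), and Lemma~\ref{lem1.2} to compare $e_{K_1 \cup K_2}|_{K_2}$ with $e_{K_2}$, the deficit intensity has total mass $\lesssim u' L_n^\nu \ell_0^{-\nu}$, as it stems solely from ``crossing'' trajectories which must traverse a distance $\geq c \ell_0 L_n$. The good event $\cG$ is the event that every auxiliary trajectory can be identified with a genuine trajectory from the sprinkled clean $K_2$-based family $\mu_{K_1 \cup K_2, u', u}^{2,\mathrm{only}}$; this sprinkled family is $\cF_1$-independent and lies in $\cI^u \cap K_2$, so (iii) holds on $\cG$.

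\textbf{Error estimate and main obstacle.} Controlling $\IP[\cG^c]$ reduces to a Poisson domination estimate: the sprinkled $\cF_1$-independent mass in $K_2$ is a Poisson of mean at least $c \, c_1 \sqrt{K} (n+1)^{-3/2} \ell_0^{-(\nu - \nu')/2} u' L_n^\nu$ by (2.7) and (1.26), while the auxiliary deficit is a Poisson of mean $\lesssim u' L_n^\nu \ell_0^{-\nu}$. The delicate point is to isolate a bad event whose probability decays as $\exp(-\lambda_{\mathrm{sprink}}^2/(c \cdot \lambda_{\mathrm{deficit}}))$, i.e.\ in the quadratic Bernstein regime for Poissons; a naive linear Chernoff bound only yields $\exp(-c \lambda_{\mathrm{sprink}})$, which is too weak. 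Algebraically, $\lambda_{\mathrm{sprink}}^2/\lambda_{\mathrm{deficit}}$ matches the required $K (n+1)^{-3} u' L_n^\nu \ell_0^{\nu'}$ precisely, and this matching is what dictates the $\sqrt{K}(n+1)^{-3/2} \ell_0^{-(\nu - \nu')/2}$ form of the sprinkling rate in (2.7). The increasing case (2.9) is handled by the symmetric coupling in which $\wh\cI$ has the law of $\cI^u \cap K_2$ and dominates $\cI^{u'} \cap K_2$ on a good event of the same exponentially small failure probability.
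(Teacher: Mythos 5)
Your reduction to a conditional coupling estimate, the bookkeeping of the eight thinned Poisson processes, and the bound on the total mass of the deficit intensity ($\lesssim u'L_n^\nu\ell_0^{-\nu}$) are all fine. The argument breaks down at property (iii), which is the entire content of the theorem. The ``identification'' of an auxiliary trajectory with a sprinkled clean trajectory cannot yield $\wh\cI\subseteq\cI^u\cap K_2$: the auxiliary randomness and the sprinkled trajectories are independent random paths, and matching one to the other does not make the trace of the former covered by the trace of the latter. Nor can one dominate the auxiliary Poisson process by the sprinkled one as point processes, since their intensity measures are mutually singular on trajectory space (paths hitting $K_1$ versus paths avoiding $K_1$). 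This is exactly the difficulty the paper's proof is organized around: it never compares whole trajectories, but decomposes them into excursions between a set $W$ and $\partial U$, and uses the Harnack-based Lemma 2.3 to show that every such excursion, whether produced by a returning level-$u'$ trajectory or by a sprinkled one, enters $W$ with a law comparable to $\ov{e}_W$; this places the two collections of excursions on a common space $\cC$ with proportional intensities, see (2.40)--(2.41), which is what makes a trace-level domination (Lemma 2.4) possible at all. Even then the ``bad'' object $\wt\sigma\,'$ is compound Poisson rather than Poisson (Remark 2.5 1)), which is why an error term is unavoidable.

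Second, the asserted error of size $\exp(-\lambda_{\mathrm{sprink}}^2/(c\,\lambda_{\mathrm{deficit}}))$ is not attainable in your set-up, so the matching of exponents is illusory. Any good event on which every auxiliary trajectory is matched to a sprinkled clean one is contained in $\{N_{\mathrm{deficit}}\le N_{\mathrm{sprink}}\}$, whose complement already has probability of order $\lambda_{\mathrm{deficit}}\,e^{-\lambda_{\mathrm{sprink}}}$ (no sprinkled trajectory, at least one auxiliary one). With your $K_2$-scale sprinkling, $\lambda_{\mathrm{sprink}}\approx(u-u')\,{\rm cap}(K_2)\approx\sqrt{K}(n+1)^{-3/2}\ell_0^{-(\nu-\nu')/2}u'L_n^\nu$ when $u$ sits at the threshold (2.7), which misses the required exponent $2u'K(n+1)^{-3}L_n^\nu\ell_0^{\nu'}$ by a factor of order $\ell_0^{(\nu+\nu')/2}$; there is no ``quadratic Bernstein regime'' to rescue this, precisely because $\IP[N_{\mathrm{sprink}}=0]=e^{-\lambda_{\mathrm{sprink}}}$ is already much larger than your target. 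The paper reaches the stated error not by a sharper tail bound but by enlarging the observation window: $W$ is chosen with ${\rm cap}(W)\approx\frac{\sqrt{K}}{(n+1)^{3/2}}L_n^\nu\ell_0^{(\nu+\nu')/2}$, see (2.58) and Remark 2.5 2), so that the sprinkled Poisson parameter $\lambda^*_W=\frac{(u-u')}{2}{\rm cap}(W)$ itself attains the target exponent and plain linear Poisson/compound-Poisson tail bounds (2.49)--(2.54), (2.59) suffice, while condition (2.7) keeps the bad mean $\lambda'_W\beta_W$ below $\lambda^*_W$. The missing ingredients are thus the excursion decomposition with the Harnack entrance-law comparison, and the intermediate-scale choice of $W$; without both, the coupling step and the error estimate fail.
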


\medskip
\begin{remark}\label{rem2.2} \rm  ~

\medskip\n
1) The events $A_{\ov{m}_1,1}$, $\ov{m}_1 \in T_{(n)}$, are decreasing and (\ref{2.8}) immediately implies that
\begin{equation}\label{2.8'}\tag{2.8'}
\begin{array}{l}
\IP \Big[ \bigcap\limits_{m \in T_{(n+1)}} 
A^u_m\Big]  \le \IP \Big[\bigcap\limits_{\ov{m}_1 \in T_{(n)}} 
A^{u^\prime}_{\ov{m}_1,1}\Big] \Big(\IP \Big[\bigcap\limits_{\ov{m}_2 \in T_{(n)}} 
A^{u^\prime}_{\ov{m}_2,2}\Big] + 2 e^{-2u^\prime \, \frac{K}{(n+1)^3} \,L^\nu_n \, \ell_0^{\nu^\prime}}\Big)\,,
\end{array}
\end{equation}

\medskip\n
2) By the FKG Inequality (\ref{1.34}) we see that
\begin{equation}\label{2.10}
\begin{array}{l}
\IP \Big[ \bigcap\limits_{m \in T_{(n)}} 
A^u_m\Big]  \ge \IP \Big[\bigcap\limits_{\ov{m}_1 \in T_{(n)}} 
A^u_{\ov{m}_1,1}\Big]\, \IP \Big[\bigcap\limits_{\ov{m}_2 \in T_{(n)}} 
A^u_{\ov{m}_2,2}\Big]  \,,
\end{array}
\end{equation}

\n
and a similar inequality holds for $B^u_m$, $m \in T_{(n+1)}$, $B^u_{\ov{m}_i,i}$, $\ov{m}_i \in T_{(n)}$, $i = 1,2$. \hfill $\square$
\end{remark}

We provide a brief outline of the proof of Theorem \ref{theo2.1}. When bounding the left-hand side of (\ref{2.8}), (\ref{2.9}), we have to cope with the mutual dependence between what happens in the union of boxes $\wt{C}_{\ov{m}_1,\cT_1}$ (of size $10 L_0)$ and  in the union of boxes $\wt{C}_{\ov{m}_2,\cT_2}$ (of size $10 L_0$) due to the presence of trajectories in the interlacement touching both collections. For this purpose we work in the above renormalization step with the two levels $0 < u' < u$. The sprinkling of additional trajectories corresponding to the increase from $u'$ to $u$, is supposed to dominate for the most part interactions taking place at level $u'$. Specifically we consider two disjoint balls $U_1, U_2$ of radius $c L_{n+1}$, with $c$ small, containing the respective collections of boxes, see (\ref{2.13}), and we keep track of excursions of the trajectories in the interlacement point process at level $u'$ between some region $W$, to be later determined, containing the two collections of boxes of size $10 L_0$, and the complement of $U = U_1 \cup U_2$. We collect all excursions generated by trajectories with labels at most $u'$ that do reenter $W$ after leaving $U$, see (\ref{2.42}), and dominate with high probability this (non-Poissonian) point process in terms of the collection of excursions of trajectories with labels between $u'$ and $u$ that do not come back to $W$ after leaving $U$. This domination step involves controls on the entrance distribution in $W$, which rely on the Harnack inequality from Lemma \ref{lem1.2}, see Lemma \ref{lem2.3}, and on a coupling lemma for the two point processes, see Lemma \ref{lem2.4}. We then choose the set $W$: a ``large $W$'' offers a high success probability for the domination in the coupling, however requires that $u$ and $u'$ are sufficiently far apart (as a function of the ``size of $W$''), cf.~(\ref{2.54}). We optimize so that domination occurs with high probability, but $u$ and $u'$ remain close enough, cf.~(\ref{2.58}). Finally the coupling combined with the monotone character of the events in the left-hand side of (\ref{2.8}), (\ref{2.9}), enables us to derive the upper bounds (\ref{2.8}), (\ref{2.9}), see (\ref{2.65}) - (\ref{2.68}).

\bigskip\n
{\it Proof of Theorem \ref{theo2.1}:} We consider $n \ge 0$, $0 < u^\prime < u$, $\cT \in \Lambda_{n+1}$, and write for simplicity  $x_1,x_2$ and $\wt{C}_1, \wt{C}_2$, dropping the subscript $\cT$ in (\ref{2.2}), when $m = 1,2 \in T_{(1)}$. We then define
\begin{equation}\label{2.11}
\wh{C}_i = \underset{m \in T_{(n)}}{\mbox{\f $\dis\bigcup$}}
\wt{C}_{m,\cT_i}, \;\; \mbox{for} \;\; i = 1,2, \;\; \mbox{and} \;\; V = \wh{C}_1 \cup \wh{C}_2\,.
\end{equation} 
In other words $\wh{C}_1$ and $\wh{C}_2$ are the respective unions of boxes of size of order $L_0$ corresponding to the respective descendants of $1$ and $2$ at the bottom scale. In particular we see by (\ref{2.3}) that
\begin{equation}\label{2.12}
\wh{C}_i  \subseteq \wt{C}_i, \;\; \mbox{for} \;\; i = 1,2 \,.
\end{equation} 

\n
Moreover by (\ref{2.1}), (\ref{2.4}), the following disjoint union
\begin{equation}\label{2.13}
U = U_1 \cup U_2, \;\; \mbox{where} \;\; U_i = B\Big(x_i, \;\dis\frac{L_{n+1}}{1000}\Big), \; i = 1,2\,,
\end{equation} 
is  such that $\ov{U}_1 \cap \ov{U}_2 = \emptyset$, and
\begin{equation}\label{2.14}
\wt{C}_i \subseteq B\Big(x_i, \;\dis\frac{L_{n+1}}{2000M}\Big) \stackrel{\rm def}{=} \wt{B}_i \subseteq U_i, \;\; \mbox{for} \;\; i = 1,2\,,
\end{equation} 

\n
where $M$ is determined in (\ref{2.36}) below, and such that $1 \le M \le \ell_0/(2 \cdot 10^4)$. We then introduce a set $W$ such that
\begin{equation}\label{2.15}
V \subseteq W \subseteq \wt{B}_1 \cup \wt{B}_2 \subseteq U\,,
\end{equation} 

\n
as well as the sequence $R_k, D_k, k \ge 1$ of successive returns of the walk on $E$ to $W$ and departures from $U$:
\begin{equation}\label{2.16}
\begin{split}
R_1 & = H_W, \, D_1 = T_U \circ \theta_{R_1} + R_1, \;\; \mbox{and by induction}
\\ 
R_{k+1} & = R_1 \circ \theta_{D_k} + D_k, \, D_{k+1} = D_1 \circ \theta_{D_k} + D_k , \;\mbox{for} \; k \ge 1\,.
\end{split}
\end{equation} 

\n
Our goal is to keep track of excursions between $W$ and $\partial U$ for the paths of the random interlacement entering $W$, and in essence dominate the effect of paths with labels at most $u^\prime$ that reenter $W$ for a second time, after leaving $U$, in terms of paths with labels between $u^\prime$ and $u$, that never reenter $W$ after leaving $U$. Optimizing on $W$ will then lead to the choice of $W$ in (\ref{2.58}) below.

\medskip
We thus introduce the Poisson point processes on $W^+$, (see (\ref{1.27}) for notation),
\begin{equation}\label{2.17}
\begin{split}
\zeta^\prime_\ell & = 1\{R_\ell < \infty = R_{\ell + 1}\} \,\mu_{W,u^\prime}, \;\; \mbox{for $\ell \ge 1$}\,,
\\ 
\zeta^*_\ell & = 1\{R_\ell < \infty = R_{\ell + 1}\} \,\mu_{W,u^\prime,u}, \;\; \mbox{for $\ell \ge 1$}\,.
\end{split}
\end{equation} 
By (\ref{1.27}) we see that
\begin{equation}\label{2.18}
\zeta^\prime_\ell , \, \ell \ge 1, \, \zeta^*_1 \;\; \mbox{are independent Poisson point processes on $W^+$}\,,
\end{equation} 
and their respective intensity measures are
\begin{equation}\label{2.19}
\begin{split}
\xi^\prime_\ell & = u^\prime \; 1\{R_\ell < \infty = R_{\ell + 1}\} \,P_{e_W}, \, \ell \ge 1\,,
\\ 
\xi^*_1& = (u - u^\prime) \; 1\{R_1< \infty = R_2\} \,P_{e_W}\,.
\end{split}
\end{equation} 

\n
Note also that the Poisson point processes on $W^+$
\begin{equation}\label{2.20}
1\{H_{\wh{C}_1} < \infty\}\,\mu_{W,u} \;\; \mbox{and} \;\; 1\{H_{\wh{C}_1} = \infty\} (\zeta^\prime_1 + \zeta^*_1) \;\;\mbox{are independent}\,.
\end{equation} 

\n
We then denote by $\cC$ the countable set of excursions from $W$ to $\partial U$:
\begin{equation}\label{2.21}
\begin{split}
\cC = \big \{ & \pi = (\pi(i))_{0 \le i \le N},\;\mbox{finite path, such that $\pi(0) \in W, \pi(N) \in \partial U$, and}
\\
&\mbox{$\pi(i) \in U$, for $0 \le i < N\big\}$}\,,
\end{split}
\end{equation} 
and with $\phi_\ell$, when $\ell \ge 1$, the map from $\{R_\ell < \infty = R_{\ell + 1}\} \subseteq W^+$ into $\cC^\ell$ such that:
\begin{equation}\label{2.22}
\begin{array}{l}
w \r \phi_\ell(w) = (w_1, \dots , w_\ell), \;\mbox{where}
\\
w_k (\cdot) = (X_{R_k + \cdot}(w))_{0 \le \cdot \le D_k(w) - R_k(w)}, \; 1 \le k \le \ell\,.
\end{array}
\end{equation} 

\n
The $\zeta^\prime_\ell$ in (\ref{2.17}) can be viewed as point processes on $\{R_\ell < \infty = R_{\ell + 1}\} (\subseteq W^+)$, and $\zeta^*_1$ can be viewed as a point process on $\{R_1 < \infty = R_2\}$. We then define 
\begin{equation}\label{2.23}
\begin{array}{l}
\mbox{$\wt{\zeta}^\prime_\ell$ the image of $\zeta^\prime_\ell$ under $\phi_\ell$, for $\ell \ge 1$, and}
\\
\mbox{$\wt{\zeta}^*_1$ the image of $\zeta^*_1$ under $\phi_1$}\,,
\end{array}
\end{equation} 
so that
\begin{equation}\label{2.24}
\begin{array}{l}
\mbox{$\wt{\zeta}^\prime_\ell, \, \ell \ge 1, \, \wt{\zeta}_1^*$ are independent Poisson point processes with intensity}
\\
\mbox{measures $\wt{\xi}^\prime_\ell, \, \ell \ge 1, \, \wt{\xi}^*_1$, which are respective images under $\phi_\ell, \, \ell \ge 1$, and}
\\
\mbox{$\phi_1$ of $\xi^\prime_\ell, \, \ell \ge 1$, and $\xi^*_1$}\,.
\end{array}
\end{equation} 

\n
The following lemma plays a crucial role for the domination procedure, i.e.~the sprinkling technique, we are currently setting into place.~It brings into play the control of Harnack constants in $d(\cdot,\cdot)$-balls from Lemma \ref{lem1.2}, together with some specific connectivity properties of the interior boundary of $d(\cdot,\cdot)$-balls owing to the special structure of $E = G \times \IZ$. As mentioned at the beginning of the section when $E = \IZ^{d+1}$, $d \ge 2$, our results also hold when we instead work with the more common sup-norm distance and the corresponding balls, possibly adapting constants.

\begin{lemma}\label{lem2.3} ($\ell_0 \ge c$)

\medskip
For any $\wt{W} \subseteq B(x_1,L_{n+1}/2000) \cup B(x_2,L_{n+1}/2000)$, $x \in \partial U \cup \partial_{\rm int} \,U$, $x^\prime \in \wt{W}$, one has
\begin{equation}\label{2.25}
c^\prime_2 \,L^{-\nu}_{n+1} \,e_{\wt{W}}(x') \le P_x [H_{\wt{W}} < \infty, \;X_{H_{\wt{W}}} = x^\prime] \le c_2 \,L^{-\nu}_{n+1} \,e_{\wt{W}}(x^\prime)\,.
\end{equation}
\end{lemma}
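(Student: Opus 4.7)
Plan. The argument has two ingredients: a reversibility-based reduction to a killed-Green-function estimate, and a Harnack-type comparison, based on Lemma \ref{lem1.2}, between that killed Green function and the escape probability $P_\cdot[H_{\wt W}=\infty]$.

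By detailed balance (\ref{1.4}) the killed Green function $g_{\wt W^c}$ is symmetric, and a path-reversal argument that records the last state before entering $\wt W$ gives
\[
P_x[H_{\wt W}<\infty,\,X_{H_{\wt W}}=x']=\sum_{z\sim x',\,z\notin\wt W}\rho_{x',z}\,g_{\wt W^c}(z,x).
\]
Conditioning (\ref{1.21}) on the first step yields
\[
e_{\wt W}(x')=\sum_{z\sim x',\,z\notin\wt W}\rho_{x',z}\,P_z[H_{\wt W}=\infty].
\]
Since both sums run over the same $z$'s, the lemma will follow once one proves the two-sided bound
\[
c\, L_{n+1}^{-\nu}\,P_z[H_{\wt W}=\infty]\le g_{\wt W^c}(z,x)\le c'\, L_{n+1}^{-\nu}\,P_z[H_{\wt W}=\infty]
\]
uniformly over $z\sim x'$ with $z\notin\wt W$ and $x\in\partial U\cup\partial_{\rm int}U$.

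To control orders of magnitude I use that $\wt W\subseteq B(x_1,L_{n+1}/2000)\cup B(x_2,L_{n+1}/2000)$, that $x\in\partial U\cup\partial_{\rm int}U$ lies at $d$-distance in $[L_{n+1}/1000-1,L_{n+1}/1000+1]$ from $x_1$ or $x_2$, and that the embedding conditions (\ref{2.3})--(\ref{2.4}) applied at the root of $\cT$ force $L_{n+1}/100\le d(x_1,x_2)\le 20\,L_{n+1}$. Consequently $d(z,x),\,d(y,x)\asymp L_{n+1}$ for every $z$ neighbouring $\wt W$ (outside) and every $y\in\wt W$, and (\ref{1.9}) gives $g(z,x),\,g(y,x)\asymp L_{n+1}^{-\nu}$. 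The strong Markov property at $H_{\wt W}$ applied to $g(\cdot,x)$ yields
\[
g(z,x)=g_{\wt W^c}(z,x)+\sum_{y\in\wt W}P_z[H_{\wt W}<\infty,\,X_{H_{\wt W}}=y]\,g(y,x),
\]
and chaining Lemma \ref{lem1.2} on $d$-balls of radius of order $L_{n+1}/(10^6 c_0)$ (whose $c_0$-extensions stay inside $\wt W^c$, thanks to the scale gap between $L_{n+1}/2000$ and $L_{n+1}/1000$) allows one to exploit the near-constancy of the $g(y,x)$'s to extract the required proportionality of $g_{\wt W^c}(z,x)$ to $P_z[H_{\wt W}=\infty]$.

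The main obstacle is the boundary-Harnack character of this reduced bound: both $g_{\wt W^c}(\cdot,x)$ and $P_\cdot[H_{\wt W}=\infty]$ vanish on $\wt W$, and when $z$ is adjacent to $\wt W$ the escape probability can itself be small, so the naive inequality $g_{\wt W^c}(z,x)\le g(z,x)$ is useless for producing a matching upper bound. The crucial input is that Lemma \ref{lem1.2} delivers Harnack constants independent of $L_{n+1}$, which, together with the tight two-sided bounds in (\ref{1.9}), forces uniform constants $c_2,c_2'$ in (\ref{2.25}); equivalently, one may reinterpret the bound via the Doob $h$-transform with $h(\cdot)=P_\cdot[H_{\wt W}=\infty]$, reducing matters to a Green-function estimate for the walk conditioned to avoid $\wt W$ between points at $d$-distance of order $L_{n+1}$.
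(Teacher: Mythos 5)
Your opening reductions are correct as identities: the first-entrance decomposition does give $P_x[H_{\wt{W}}<\infty,\,X_{H_{\wt{W}}}=x']=\sum_{z\sim x',\,z\notin\wt{W}}\rho_{x',z}\,g_{\wt{W}^c}(z,x)$, and conditioning on the first step gives $e_{\wt{W}}(x')=\sum_{z\sim x',\,z\notin\wt{W}}\rho_{x',z}\,P_z[H_{\wt{W}}=\infty]$. But the statement you have reduced to, namely $g_{\wt{W}^c}(z,x)\asymp L_{n+1}^{-\nu}\,P_z[H_{\wt{W}}=\infty]$ uniformly over $z$ adjacent to $\wt{W}$, is a boundary Harnack--type estimate at the boundary of an \emph{arbitrary} subset $\wt{W}$ of the two balls, and your argument for it does not work. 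The expansion $g_{\wt{W}^c}(z,x)=g(z,x)-\sum_{y\in\wt{W}}P_z[H_{\wt{W}}<\infty,\,X_{H_{\wt{W}}}=y]\,g(y,x)$ only yields the desired factorization if the values $g(y,x)$, $y\in\wt{W}\cup\{z\}$, were constant up to factors $1+o(1)$; but (\ref{1.9}) and Lemma \ref{lem1.2} only give comparability up to a fixed multiplicative constant $C>1$, so the subtraction leaves you with bounds of the form $L_{n+1}^{-\nu}(a-A\,P_z[H_{\wt{W}}<\infty])$ and $L_{n+1}^{-\nu}(A-a\,P_z[H_{\wt{W}}<\infty])$ with $A/a=C$, which are respectively vacuous and bounded away from $0$ precisely in the relevant regime where $P_z[H_{\wt{W}}=\infty]$ is small (e.g.\ $z$ next to a point of $\wt{W}$ surrounded by $\wt{W}$). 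Moreover the chaining you invoke cannot reach $z$: the function $g_{\wt{W}^c}(\cdot,x)$ vanishes on $\wt{W}$ and $z$ neighbours $\wt{W}$, so balls of radius of order $L_{n+1}$ whose $c_0$-dilates avoid $\wt{W}$ exist near $\partial_{\rm int}U$ but not near $z$; the ``scale gap between $L_{n+1}/2000$ and $L_{n+1}/1000$'' helps only at the outer boundary. Restating the problem as a Green function estimate for the Doob $h$-transform with $h=P_\cdot[H_{\wt{W}}=\infty]$ is not a proof either; it is exactly the missing boundary Harnack input.

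The paper's proof avoids any analysis at the boundary of $\wt{W}$. It uses the sweeping identity $e_{\wt{W}}(x')=P_{e_U}[H_{\wt{W}}<\infty,\,X_{H_{\wt{W}}}=x']$ (from (\ref{1.27}), (\ref{1.29})), so that $e_{\wt{W}}(x')$ is an average over $e_U$ (supported on $\partial_{\rm int}U$) of the very function $f(x)=P_x[H_{\wt{W}}<\infty,\,X_{H_{\wt{W}}}=x']$ one wants to estimate. Since $\partial_{\rm int}U$ lies at distance at least of order $L_{n+1}$ from $\wt{W}$, see (\ref{2.28}), $f$ is harmonic on balls $B(\ov{x}(i),c_0L)$ with $L=L_{n+1}/(10^4c_0)$ along a chain of boundedly many points of $\partial_{\rm int}U\cup U^c$, and interior Harnack (Lemma \ref{lem1.2}) plus the connectivity of $\partial_{\rm int}U$ gives $\sup_{\partial_{\rm int}U}f\le c\,\inf_{\partial_{\rm int}U}f$; combined with ${\rm cap}(U)\asymp L_{n+1}^{\nu}$ from (\ref{1.26}), (\ref{1.22}), this yields (\ref{2.25}), the extension from $\partial_{\rm int}U$ to $\partial U$ following from ellipticity. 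You should either adopt this route, or supply a genuine proof of the boundary estimate your reduction requires, which the tools of the paper do not provide for arbitrary $\wt{W}$.
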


\begin{proof}
From the inclusion $\wt{W} \subseteq U$, see  (\ref{2.13}), we have the classical sweeping identity, (for instance resulting from (\ref{1.27}), (\ref{1.29})):
\begin{equation}\label{2.26}
e_{\wt{W}}(x^\prime) = P_{e_U} [H_{\wt{W}} < \infty, \; X_{H_{\wt{W}}} = x^\prime]\,,
\end{equation}
so that
\begin{equation}\label{2.27}
\begin{array}{l}
{\rm cap}(U) \, \inf\limits_{x \in \partial_{\rm int}U} \,P_x [H_{\wt{W}} < \infty, \,X_{H_{\wt{W}}} = x^\prime] \le e_{\wt{W}}(x^\prime) \le
\\[2ex]
 {\rm cap}\,(U)\, \sup\limits_{x \in \partial_{\rm int}U}  \,P_x [H_{\wt{W}} < \infty, \,X_{H_{\wt{W}}} = x^\prime] \,.
\end{array}
\end{equation}

\medskip\n
Observe that the function $f(x) = P_x [H_{\wt{W}}  < \infty$, $X_{H_{\wt{W}}} = x^\prime]$ is a non-negative function, which is harmonic in $\wt{W}^c$. Now $\wt{W} \subseteq B(x_1, \frac{L_{n+1}}{2000}) \cup B(x_2, \frac{L_{n+1}}{2000})$ and note that by (\ref{2.4}), (\ref{2.13}), (\ref{2.14}) one has:
\begin{equation}\label{2.28}
d(\partial_{\rm int} U, \; B(x_1,L_{n+1}/2000) \cup B(x_2,L_{n+1}/2000)) \ge \dis\frac{L_{n+1}}{1000} - \frac{L_{n+1}}{2000} - 1 > \frac{L_{n+1}}{5000} \;.
\end{equation}

\n
Moreover $\partial_{\rm int} U = \partial_{\rm int}\,U_1 \cup \partial_{\rm int} \,U_2$, and for $i=1,2$,
\begin{equation}\label{2.29}
\begin{array}{l}
\mbox{any two points on $\partial_{\rm int} \,U_i$ can be linked by a path in $\partial_{\rm int} \,U_i$ concatenation}
\\
\mbox{of at most three paths, which are either ``horizontal'' with at most $c \,L_{n+1}$}
\\
\mbox{steps or ``vertical'' with at most $c\, L_{n+1}^{\beta/2}$ steps.}
\end{array}
\end{equation}

\smallskip\n
The above statement is a straightforward consequence from the fact that for $x = (y,z)$ in $E$ and $R \ge 1$ one has the identity
\begin{align*}
\partial_{\rm int} \,B(x,R) =&\;  B_G(y,R) \times (z + \{-[R^{\beta/2}], [R^{\beta/2}]\}) \; 
\\
& \;\cup  \partial_{\rm int}B_G(y,R) \times (z + [-[R^{\beta/2}], [R^{\beta/2}]])\,.
\end{align*}

\medskip\n
Note that  $d(x_1,x_2) \le 20L_{n+1}$ due to (\ref{2.2}), (\ref{2.3}). It now follows that there is an integer constant $\ov{c}$ such that
\begin{equation}\label{2.30}
\begin{array}{l}
\mbox{for any two points $\ov{x}, \wt{x}$ in $\partial_{\rm int}\,U$, one can construct $\ov{x}(i)$, $0 \le i \le \ov{c}$,}
\\
\mbox{in $\partial_{\rm int}\,U \cup U^c$, such that $\ov{x}(0) = \ov{x}$, $\ov{x}(\ov{c}) = \wt{x}$, and}
\\
\mbox{$d(\ov{x}(i)$, $\ov{x}(i+1)) \le L \stackrel{\rm def}{=} L_{n+1} / (10^4 c_0)$}\,.
\end{array}
\end{equation}

\psfragscanon
\begin{center}
\includegraphics[width=15cm]{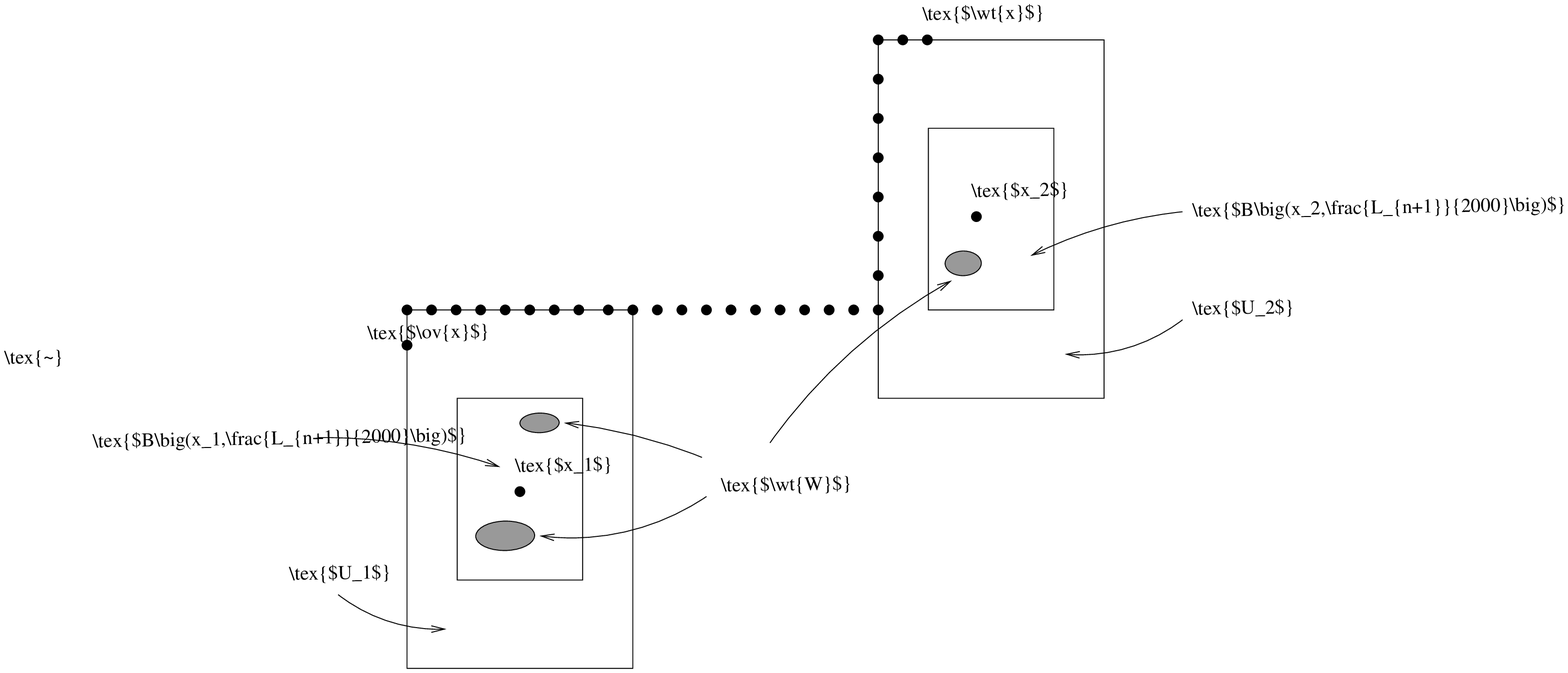}
\end{center}

\medskip
\begin{center}
\begin{tabular}{ll}
Fig.~1: & A schematic illustration of a possible sequence $\ov{x}(i)$, $0 \le i \le \ov{c}$, 
\\
&corresponding to the black dots.
\end{tabular}
\end{center}

\bigskip\n
For each $i$, the function $f$ is non-negative on $\ov{B(\ov{x}(i), c_0 L)}$ and harmonic in $B(\ov{x}(i), c_0L)$ $(\subseteq \wt{W}^c)$ due to (\ref{2.28}), (\ref{2.30}). Applying Lemma \ref{lem1.2} and a chaining argument, we see that:
\begin{equation}\label{2.31}
\sup\limits_{x \in \partial_{\rm int} \,U} \,P_x[H_{\wt{W}} < \infty, X_{H_{\wt{W}}} = x^\prime] \le c  \inf\limits_{x \in \partial_{\rm int}\, U} P_x[H_{\wt{W}} < \infty, X_{H_{\wt{W}}} = x']\,.
\end{equation}

\n
Moreover we know by (\ref{1.26}), (\ref{1.22}) that
\begin{equation}\label{2.32}
c\,L^\nu_{n+1} \le {\rm cap} (U) \le c^\prime \, L_{n+1}^\nu\,.
\end{equation}

\n
Coming back to (\ref{2.27}) we deduce (\ref{2.25}) for $x \in \partial_{\rm int}\,U$ and $x^\prime \in \wt{W}$. The extension to $x \in \partial U$ immediately follows thanks to the ellipticity condition satisfied by the walk, see above (\ref{1.6}). 
\end{proof}

\medskip
The next objective is to derive an upper bound on the intensity measures $\wt{\xi}^\prime_{\ell}$, $\ell \ge 2$, and a lower bound on $\wt{\xi}^*_1$, see (\ref{2.40}), (\ref{2.41}) below. We first note that for $\ell \ge 2$, $w_1,\dots,w_\ell \in \cC$, one has with the convention $R_0 = D_0 = 0$, as well as the notation $\ov{e}_W = e_W / {\rm cap}(W)$,
\begin{equation}\label{2.33}
\begin{array}{l}
\wt{\xi}^\prime_\ell (w_1,\dots,w_\ell) =
\\
 u^\prime \,P_{e_W} \big[R_\ell < \infty, (X_{R_k + \cdot})_{0 \le \cdot \le D_k - R_k} = w_k(\cdot), \, 1 \le k \le \ell, \,R_{\ell + 1} = \infty\big] \underset{\rm property}{\stackrel{\rm strong \; Markov}{\le}}
 \\
 u^\prime\, E_{e_W} \big[R_\ell  < \infty, (X_{R_k + \cdot})_{0 \le \cdot \le D_k - R_k} = w_k(\cdot), \,1 \le k < \ell, 
 \\[1ex]
 \qquad \quad  P_{X_{R_\ell}}[(X_\point)_{0 \le \cdot \le T_U} = w_\ell(\cdot)]\big] =
 \\[1ex]
 u^\prime\, E_{e_W} \big[D_{\ell - 1} < \infty, (X_{R_k + \cdot})_{0 \le \cdot \le D_k - R_k} = w_k(\cdot), \,1 \le k < \ell, 
  \\
 \qquad \quad E_{X_{D_{\ell -1}}}[H_W < \infty, \,P_{X_{H_W}} [(X_\point)_{0 \le \cdot \le T_U} = w_\ell(\cdot)]\big] \stackrel{(\ref{2.25})}{\le}
 \\[2ex]
 u^\prime \, \mbox{\f $\dis\frac{c_2}{L^\nu_{n+1}}$} \; {\rm cap}(W) \,E_{e_W}  \big[D_{\ell - 1} < \infty, (X_{R_k + \cdot})_{0 \le \cdot \le D_k - R_k} = w_k, \,1 \le k \le \ell - 1\big]  \; \times
 \\[2ex]
 P_{\ov{e}_W} [(X_\point)_{0 \le \cdot \le T_U} = w_\ell(\cdot)]\,,
 \end{array}
\end{equation}
and by induction
\begin{equation*}
\begin{array}{l}
\le u^\prime {\rm cap}(W) \Big(c_2 {\dis\frac{{\rm cap}(W)}{L^\nu_{n+1}}}\Big)^{\ell - 1} \; \prod\limits^\ell_{k = 1} \;\Gamma(w_k)\,,
\end{array}
\end{equation*}

\medskip\n
where $\Gamma$ is the law on $\cC$ of $(X_\point)_{0 \le \cdot \le T_U}$ under $P_{\ov{e}_W}$:
\begin{equation}\label{2.34}
\Gamma(\cdot) = P_{\ov{e}_W} [(X_\point)_{0 \le \cdot \le T_U} = \cdot ]\,.
\end{equation}

\n
As for the lower bound on $\wt{\xi}^*_1$, we note that for $w \in \cC$ one has
\begin{equation}\label{2.35}
\wt{\xi}^*_1(w) = (u-u^\prime) \,P_{e_W} [(X_\point)_{0 \le \cdot \le T_U} = w(\cdot), \, H_W \circ \theta_{T_U} = \infty]\,.
\end{equation}

\n
Observe that for $x \in \partial U$ we have by (\ref{2.14}), (\ref{2.15}), and (\ref{2.25})
\begin{equation}\label{2.36}
\begin{array}{l}
P_x[H_W < \infty] \le \dis\frac{c_2}{L^\nu_{n+1}} \,{\rm cap}(W)\stackrel{(\ref{1.26})}{\le} \dis\frac{c}{L^\nu_{n+1}} \,\Big(\dis\frac{L_{n+1}}{2000M}\Big)^\nu \le (2 e)^{-1}, 
\\[3ex]
\mbox{when we choose $M = c_3 \ge 1$}\,,
\end{array}
\end{equation}

\n
and we assume $\ell_0 \ge 2 \cdot 10^4 c_3$, so that $1 \le M \le \ell_0 / (2 \cdot 10^4)$. Coming back to (\ref{2.35}) we see applying the strong Markov property at time $T_U$ that for $w \in \cC$:
\begin{equation}\label{2.37}
\wt{\xi}^*_1 (w) \ge \dis\frac{(u-u^\prime)}{2} \;{\rm cap}(W) \, \Gamma (w)\,.
\end{equation}
We then introduce the notation
\begin{equation}\label{2.38}
\lambda^\prime_W = u^\prime {\rm cap}(W), \;\; \beta_W = \dis\frac{c_2\, {\rm cap}(W)}{L^\nu_{n+1}} \;\Big( \le \dis\frac{1}{2e}, \; \mbox{see (\ref{2.36})\Big)} \,,
\end{equation}
as well as
\begin{equation}\label{2.39}
\lambda^*_W =  \dis\frac{(u-u^\prime)}{2} \;{\rm cap}(W)\,.
\end{equation}
We have thus shown that
\begin{align}
\wt{\xi}^\prime_\ell & \le \lambda^\prime_W \; \beta_W^{\ell - 1} \;\Gamma^{\otimes \ell}, \;\mbox{for $\ell \ge 2$, and} \label{2.40}
\\[1ex]
\wt{\xi}^*_1 & \ge \lambda^*_W \;\Gamma\,. \label{2.41}
\end{align} 

\medskip\n
We now come to the construction of a coupling that will provide the domination argument hinted at in the comment below (\ref{2.16}). We denote by $s_\ell$ the map which sends a finite point measure $m$ on $\cC^\ell$ onto a finite point measure on $\cC$ via:
\begin{equation*}
s_\ell(m) = \dsl^\ell_{k=1} \;p_k^\ell \circ m\,,
\end{equation*}

\n
where $p^\ell_k$ stands for the $k$-th canonical coordinate on $\cC^\ell$, for $1 \le k \le \ell$. In other words $s_\ell(m) = \sum^N_{i=1} \delta_{w_1^i} + \dots + \delta_{w_\ell^i}$, when $m = \sum^N_{i=1} \delta_{(w^i_1,\dots,w_\ell^i)}$. We want to construct a coupling of the finite point measure on $\cC$:
\begin{equation}\label{2.42}
\wt{\sigma}^\prime = \dsl_{\ell \ge 2} \;s_\ell(\wt{\zeta}^\prime_\ell)\,,
\end{equation}

\n
which collects all excursions from $W$ to $\partial U$ induced by trajectories in the support of $\mu_{W,u^\prime}$ having at least once exited $U$ and reentered $W$, with the finite point measure $\wt{\zeta}^*_1$, so that with high probability $\wt{\zeta}_1^*$ dominates $\wt{\sigma}^\prime$. Incidentally let us mention that $\wt{\sigma}^\prime$ is not a Poisson point measure, see Remark \ref{rem2.5} 1) below. This feature makes the domination argument more delicate.

\medskip
With this in mind we consider an auxiliary probability space $(\wt{\Omega}, \wt{\cA}, \wt{\IP})$ endowed with a collection of independent Poisson variables $N^\prime_\ell$, $\ell \ge 2$, and with a Poisson variable $N^*_1$, with respective intensities,
\begin{align}
\lambda^\prime_\ell & = \lambda^\prime_W \,\beta_W^{\ell - 1}, \; \ell \ge 2, \;\mbox{and} \label{2.43}
\\[1ex]
\lambda^*_1 & = \lambda^*_W\,, \label{2.44}
\end{align} 

\n
and with an independent collection of i.i.d. $\cC$-valued random variables $\gamma_i, i \ge 1$, with distribution $\Gamma$, cf.~(\ref{2.34}). We then define
\begin{equation}\label{2.45}
N^\prime = \dsl_{\ell \ge 2} \;\ell \, N^\prime_\ell \,.
\end{equation}

\n
By (\ref{2.38}), we see that $N^\prime$ has finite expectation, (actually $N^\prime$ is distributed as a certain Poisson compound distribution, see below (\ref{2.51})), and we define the finite point processes on $\cC$:
\begin{equation}\label{2.46}
\Sigma^\prime = \dsl_{1 \le i \le N'}  \;\delta_{\gamma_i}, \; \Sigma^*_1 = \dsl_{1 \le i \le N^*_1}  \delta_{\gamma_i}\,.
\end{equation}

\begin{lemma}\label{lem2.4} (Coupling Lemma)

\medskip
One can construct on some probability space $(\ov{\Omega}, \ov{\cA}, \ov{\IP})$ variables $N^\prime, N_1^*, \Sigma^\prime, \Sigma^*_1$ as above, and $\ov{\sigma}\,'$ distributed as $\wt{\sigma}\,'$, and $\ov{\zeta}_1\,\!\!\! ^{*}$ distributed as $\wt{\zeta}^{*}_1$ so that
\begin{equation}\label{2.47}
\ov{\sigma}\,' \le \Sigma ' \; \mbox{and} \; \Sigma^*_1 \le \ov{\zeta}_1\,\!\!\! ^{*}\,,
\end{equation}
and in particular
\begin{equation}\label{2.48}
\ov{\IP}\mbox{-a.s. on} \; \{N' \le N^*_1\}, \; \ov{\sigma}\,' \le   \ov{\zeta}_1\,\!\!\! ^{*}\,.\end{equation}
\end{lemma}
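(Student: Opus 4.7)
The plan is to build the coupling on a suitable enlargement of $(\wt\Omega, \wt\cA, \wt\IP)$, with three main ingredients: (i) a realization of $\Sigma^\prime$ as the image under $\sum_\ell s_\ell$ of independent Poisson point processes $\Pi_\ell$ on $\cC^\ell$ with intensities $\lambda^\prime_\ell \Gamma^{\otimes \ell}$, built directly out of the Poisson counts $N^\prime_\ell$ and the i.i.d.~marks $\gamma_i$; (ii) a thinning of each $\Pi_\ell$ based on (\ref{2.40}) producing $\ov{\zeta}^\prime_\ell \le \Pi_\ell$ distributed as $\wt{\zeta}^\prime_\ell$; (iii) a superposition construction of $\ov{\zeta}_1\,\!\!\!^{*} \ge \Sigma^*_1$ based on (\ref{2.41}).

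For (i), set $k_\ell = \sum_{2 \le j < \ell} j\, N^\prime_j$ and, working on $\wt\Omega$, define
\begin{equation*}
\Pi_\ell = \sum_{j=1}^{N^\prime_\ell} \delta_{(\gamma_{k_\ell+(j-1)\ell+1},\,\ldots,\,\gamma_{k_\ell+j\ell})}, \quad \ell \ge 2.
\end{equation*}
The blocks of $\gamma$-indices assigned to distinct values of $\ell$ are disjoint, so conditionally on $(N^\prime_j)_{j \ge 2}$ the $\Pi_\ell$'s are independent and each $\Pi_\ell$ is a sum of $N^\prime_\ell$ i.i.d.~$\Gamma^{\otimes\ell}$ atoms. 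Since the $N^\prime_\ell$'s are independent Poisson variables with parameters $\lambda^\prime_\ell$, a standard Poissonization/splitting argument shows that unconditionally $(\Pi_\ell)_{\ell \ge 2}$ are independent Poisson point processes on $\cC^\ell$ with intensities $\lambda^\prime_\ell \Gamma^{\otimes\ell}$. Moreover, by direct expansion, $\sum_{\ell \ge 2} s_\ell(\Pi_\ell) = \sum_{i=1}^{N^\prime} \delta_{\gamma_i} = \Sigma^\prime$.

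For (ii), enlarge the space so that each atom $w$ of each $\Pi_\ell$ carries an independent Bernoulli mark with success probability $p_\ell(w) = \frac{d\wt{\xi}^\prime_\ell}{d(\lambda^\prime_\ell \Gamma^{\otimes\ell})}(w) \in [0,1]$, which is well defined (and explicit, since $\cC^\ell$ is countable) thanks to (\ref{2.40}). Letting $\ov{\zeta}^\prime_\ell$ be the restriction of $\Pi_\ell$ to atoms with mark $1$, classical Poisson thinning makes $(\ov{\zeta}^\prime_\ell)_{\ell \ge 2}$ independent Poisson point processes of intensities $(\wt{\xi}^\prime_\ell)_{\ell \ge 2}$, so that $\ov{\sigma}\,' := \sum_{\ell \ge 2} s_\ell(\ov{\zeta}^\prime_\ell)$ has the law of $\wt{\sigma}\,'$ by (\ref{2.24}) and (\ref{2.42}), and $\ov{\sigma}\,' \le \sum_{\ell \ge 2} s_\ell(\Pi_\ell) = \Sigma^\prime$ pointwise. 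For (iii), invoke (\ref{2.41}) to write $\wt{\xi}^*_1 = \lambda^*_W \Gamma + \rho$ with $\rho \ge 0$, introduce on a further enlargement an independent Poisson point process $Y$ on $\cC$ of intensity $\rho$, and set $\ov{\zeta}_1\,\!\!\!^{*} = \Sigma^*_1 + Y$. Since $\Sigma^*_1$ is Poisson on $\cC$ with intensity $\lambda^*_W \Gamma$, superposition shows that $\ov{\zeta}_1\,\!\!\!^{*}$ has the law of $\wt{\zeta}^*_1$, and evidently $\ov{\zeta}_1\,\!\!\!^{*} \ge \Sigma^*_1$; this establishes (\ref{2.47}). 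Finally, on $\{N^\prime \le N^*_1\}$, the fact that $\Sigma^\prime$ and $\Sigma^*_1$ are supported on initial segments of the common sequence $(\gamma_i)$ yields $\Sigma^\prime \le \Sigma^*_1$, and chaining with (\ref{2.47}) gives (\ref{2.48}).

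The most delicate point is (i): the $\Pi_\ell$'s are built by carving the single i.i.d.~sequence $(\gamma_i)$ into random blocks of sizes dictated by the Poisson counts $N^\prime_j$, and one must verify carefully that the resulting joint structure is still independent Poissonian. The verification is a standard splitting-of-Poisson argument, but it requires some conditional-independence bookkeeping around the random block boundaries. Everything else is routine Poisson manipulation: thinning for the upper domination $\ov{\sigma}\,' \le \Sigma^\prime$, and superposition for the lower domination $\Sigma^*_1 \le \ov{\zeta}_1\,\!\!\!^{*}$.
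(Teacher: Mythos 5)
Your construction is correct and follows essentially the same route as the paper: dominate the intensities $\wt{\xi}^\prime_\ell$ by $\lambda^\prime_\ell\,\Gamma^{\otimes\ell}$ and thin to obtain $\ov{\sigma}\,' \le \Sigma'$, use $\lambda^*_1\,\Gamma \le \wt{\xi}^*_1$ and superposition to obtain $\Sigma^*_1 \le \ov{\zeta}_1\,\!\!\!^{*}$, and deduce (\ref{2.48}) from the fact that $\Sigma'$ and $\Sigma^*_1$ are built from initial segments of the same i.i.d.~sequence $(\gamma_i)$. The only difference is that you make the paper's ``enlarging the probability space'' steps explicit, in particular by carving $(\gamma_i)$ into blocks so that $\sum_{\ell \ge 2} s_\ell(\Pi_\ell) = \Sigma'$ holds pathwise rather than merely in distribution, which is a valid implementation of the same argument.
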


\begin{proof} 
Observe that $\Sigma '$ has the same distribution as $\sum_{\ell \ge 2} s_\ell (\ov{\zeta}_\ell)$, where $\ov{\zeta}_\ell$, $\ell \ge 2$, are independent Poisson point processes on $\cC^\ell$ with respective intensities $\lambda^\prime_\ell \,\Gamma^{\otimes \ell} \ge \wt{\xi}'_\ell$, due to (\ref{2.40}), (\ref{2.43}). We construct a coupling of the $\wt{\zeta}'_\ell$, $\ell \ge 2$, and the $\ov{\zeta}_\ell$, $\ell \ge 2$, say by thinning the $\ov{\zeta}_\ell$, $\ell \ge 2$, so that $\ov{\zeta}_\ell \ge \wt{\zeta}'_\ell$, for each $\ell \ge 2$. This permits after ``enlarging the probability space $(\wt{\Omega}, \wt{\cA}, \wt{\IP})$'', to construct a $\ov{\sigma}\,'$ with same distribution as $\wt{\sigma}\,'$ and such that $\Sigma^\prime \ge \ov{\sigma}\,'$.

\medskip
Then note that $\Sigma^*_1$ is a Poisson point process on $\cC$ with intensity measure $\lambda^*_1 \,\Gamma \le \wt{\xi}^{*}_1$ by (\ref{2.41}), (\ref{2.44}). We can then construct, by ``further enlarging'' the probability space, a $\ov{\zeta}_1\,\!\!\! ^{*}$ having same law as $\wt{\zeta}^{*}_1$, such that $\Sigma^*_1 \le \ov{\zeta}_1\,\!\!\! ^{*}$. This yields (\ref{2.47}) and (\ref{2.48}) is an immediate consequence.
\end{proof}

\medskip
The next task is to bound the probability of the ``bad event'' $\{N^*_1 < N'\}$, cf.~(\ref{2.48}), and by appropriately selecting $W$ in (\ref{2.58}) obtain the bound (\ref{2.59}). We thus note that
\begin{equation}\label{2.49}
\ov{\IP}[N_1^* < N'] \le \ov{\IP} \Big[N^*_1 \le \dis\frac{\lambda^*_W}{2}\Big] + \ov{\IP} \Big[N' > \dis\frac{\lambda^*_W}{2}\Big]\,.
\end{equation}

\n
By classical bounds on the tail of a Poisson variable of intensity $\lambda^*_W$ we have:
\begin{equation}\label{2.50}
\ov{\IP}\Big[N_1^* \le \dis\frac{\lambda^*_W}{2}\Big] \le e^{-c \lambda^*_W}\,.\end{equation}

\n
As for the last term of (\ref{2.49}), the exponential Chebyshev inequality yields
\begin{equation}\label{2.51}
\ov{\IP}\Big[N' > \dis\frac{\lambda^*_W}{2}\Big] \le \exp\Big\{- a\dis\frac{\lambda^*_W}{2}\Big\} \; E^{\ov{\IP}}[e^{aN'}], \;\mbox{for $a > 0$}\,.
\end{equation}
We then observe that
\begin{equation*}
E^{\ov{\IP}} [e^{aN'}] \stackrel{(\ref{2.45})}{=} \exp\Big\{ \dsl_{\ell \ge 2} \,\lambda^\prime_\ell (e^{a \ell} -1)\Big\} = \exp\Big\{ \lambda^\prime_W \, \dsl_{\ell \ge 2}\,\beta_W^{\ell - 1} (e^{a \ell} - 1)\Big\}\,.
\end{equation*}

\n
This formula incidentically shows that $N'$ has the law of a compound Poisson variable with compounding distribution which is that of a geometric variable with parameter $1 - \beta_W$ conditioned to be at least 2. Note that $E[e^{a N'}] = \infty$, when $e^a \ge \beta_W^{-1}$, but that
\begin{equation}\label{2.52}
E^{\ov{\IP}} [e^{a N'}] \le \exp\Big\{\lambda^\prime_W \,\beta_W \, \dis\frac{e^{2a}}{1 - \beta_W e^a}\Big\}, \;\mbox{when $\beta_W \,e^a < 1$}\,.
\end{equation}
Coming back to (\ref{2.51}), we have shown that
\begin{equation}\label{2.53}
\ov{\IP}\Big[N' > \dis\frac{\lambda^*_W}{2}\Big] \le \exp\Big\{ - a\dis\frac{\lambda^*_W}{2} + \lambda^\prime_W \,\beta_W \,  \dis\frac{e^{2a}}{1 - \beta_W e^a}\Big\}, \;\mbox{for $\beta_W \,e^a < 1$}\,.
\end{equation}

\n
Choosing $a=1$, we know from (\ref{2.38}) that $2 e \beta_W \le 1$, so that when $\frac{\lambda^*_W}{4} \ge 2 \beta_W \,e^2 \,\lambda^\prime_W$ holds, one finds that $\ov{\IP} [N' > \frac{\lambda^*_W}{2}] \le \exp\{ - \frac{\lambda^*_W}{4}\}$. Thus coming back to (\ref{2.49}) we have shown that for any $W$ such that $V \subseteq W \subseteq \bigcup^2_{i=1} \,B(x_i,L_{n+1}/(2000 c_3)) = \wt{B}_1 \cup \wt{B}_2$, cf.~(\ref{2.15}), (\ref{2.36}), (\ref{2.39}), one has
\begin{equation}\label{2.54}
\ov{\IP}[N_1^* < N'] \le 2 \exp\{ - c \,\lambda^*_W\}, \; \mbox{when} \; (u-u') \ge 16e^2\,c_2 \; \dis\frac{{\rm cap}(W)}{L^\nu_{n+1}} \;u'\,.
\end{equation}

\n 
We will now select the set $W$. We want on the one hand ${\rm cap}(W)$ large, to take advantage of the left-hand inequality in (\ref{2.54}), and on the other hand not too large, so that we can pick $u$,$u'$ close enough (in our renormalization scheme, the sequence of levels will need to converge). 

\medskip
Due to (\ref{1.26}) and (\ref{2.11}), we have
\begin{equation}\label{2.55}
{\rm cap}(V) \le \wt{c} \;2^{n+1} \,L_0^\nu \,,
\end{equation}
as well as
\begin{equation}\label{2.56}
{\rm cap}(\wt{B}_1 \cup \wt{B}_2) \ge\ov{c} \,L^\nu_{n+1} = \ov{c}\,\ell_0^{(n+1)\nu} L_0^\nu \,.
\end{equation}

\medskip\n
Observe that for $A \subset \subset E$, $x \in E$, one has, cf.~(\ref{1.22})
\begin{equation}\label{2.57}
{\rm cap}(A) \le {\rm cap}(A \cup \{x\}) \le {\rm cap}(A) + c_*\,.
\end{equation}

\n
Thus when $\ell_0 \ge c(K,\nu^\prime)$, one can find $W$ such that $V \subseteq W \subseteq \wt{B}_1 \cup \wt{B}_2$ and
\begin{equation}\label{2.58}
\fr \; \mbox{\f $\dis\frac{\sqrt{K}}{(n+1)^{3/2}}$} \;L^\nu_n \, \ell_0^{\nu^{\prime\prime}} \le {\rm cap}(W) \le 2 \; \mbox{\f $\dis\frac{\sqrt{K}}{(n+1)^{3/2}}$} \;L_n^\nu \, \ell_0^{\nu^{\prime\prime}}, \;\mbox{where} \; \nu^{\prime\prime} = \dis\frac{\nu + \nu^\prime}{2}\;.
\end{equation}

\medskip\n
Indeed, when $\ell_0 \ge c(K,\nu ')$, with the same constants as in (\ref{2.55}) - (\ref{2.57}), one has for all $n \ge 0$, $\wt{c} \,2^{n+1} \,L_0^\nu \le \frac{\sqrt{K}}{(n+1)^{3/2}} \;\ell_0^{n \nu} \,L_0^\nu \,\ell_0^{\nu^{\prime\prime}} < \ov{c} \, \ell_0^{(n+1)\nu} \, L_0^\nu$, as well as $\frac{\sqrt{K}}{(n+1)^{3/2}} \,\ell_0^{n \nu} \,L_0^\nu \,\ell_0^{\nu^{\prime\prime}} > 2c_*$.

\medskip
So when $\ell_0 \ge c(K,\nu ')$, with the above choice of $W$ and (\ref{2.54}), we have shown that 
\begin{equation}\label{2.59}
\begin{array}{l}
\mbox{when $u-u' \ge c_1 \,\sqrt{K}/(n+1)^{3/2} \,\ell_0^{-(\nu - \nu^{\prime\prime})}u'$, then}
\\[2ex]
\ov{\IP}[N_1^* < N'] \le 2 \exp\Big\{ - c(u-u') \, \mbox{\f $\dis\frac{\sqrt{K}}{(n+1)^{3/2}}$} \;L^\nu_n \, \ell_0^{\nu^{\prime\prime}}\Big\} 
\\[2ex]
\qquad \qquad \quad\; \le 2 \exp\Big\{ - 2 u' \; \mbox{\f $\dis\frac{K}{(n+1)^3}$} \;L^\nu_n \, \ell^{\nu^\prime}_0\Big\}\,,
\\[3ex]
\mbox{(since $2 \nu^{\prime\prime} - \nu = \nu '$ and choosing $c_1$ large enough)}\,.
\end{array}
\end{equation}

\n
We now introduce the random finite subsets of $V(\subseteq W)$:
\begin{equation} \label{2.60}
\begin{array}{l}
\cI '_\ell =  V \cap \Big( \bigcup\limits_{(w_1,\dots,w_\ell) \in {\rm Supp} \,\wt{\zeta}^{'}_\ell} 
{\rm range} (w_1) \cup \dots \cup {\rm range} (w_\ell)\Big), \;\mbox{for $\ell \ge 1$}\,,
\end{array}
\end{equation}
\begin{equation}\label{2.61}
\hspace{-5cm}\begin{array}{l}
\cI^*_1  = V \cap \Big(\bigcup\limits_{w \in {\rm Supp} \, \wt{\zeta}^{*}_1} 
{\rm range} (w)\Big)\,. 
\end{array}
\end{equation}
By (\ref{2.23}), (\ref{2.24}) we see that
\begin{equation}\label{2.62}
\begin{array}{l}
\mbox{the random subsets $\cI^{'}_\ell, \, \ell \ge 1, \, \cI^*_1$ are independent,}
\\[1ex]
\cI^{u'} \cap V = \bigcup\limits_{\ell \ge 1} \;  \cI '_\ell, \; \mbox{and} \; \; \cI^u \cap V \supseteq \cI^*_1\,.
\end{array}
\end{equation}
We also note that
\begin{equation*}
\hspace{-1.2cm}\begin{array}{l}
\cI^u \cap \wh{C}_1 = \wh{C}_1 \cap  \Big( \bigcup\limits_{w \in {\rm Supp} \, (1_{\{H_{\wh{C}_1} < \infty\} \mu_{W,u}})} {\rm range} (w)\Big)\,,
\end{array}
\end{equation*}
and that
\begin{equation*}
\begin{array}{l}
(\cI^\prime_1 \cup \cI^*_1) \cap \wh{C}_2 = \wh{C}_2 \cap  \Big(\bigcup\limits_{w \in {\rm Supp} \, (1_{\{H_{\wh{C}_1} =\infty\}(\zeta '_1 +\zeta^*_1)})} 
{\rm range} (w)\Big)\,.
\end{array}
\end{equation*}
By (\ref{2.20}) we thus see that
\begin{equation}\label{2.63}
\cI^u \cap \wh{C}_1 \;\mbox{and} \; (\cI^\prime_1 \cup \cI^*_1) \cap \wh{C}_2 \; \mbox{are independent}\,.
\end{equation}

\medskip\n
By even easier arguments we also see that
\begin{equation}\label{2.64}
(\cI_1'  \cup \cI^*_1) \cap \wh{C}_1 \; \mbox{and} \;(\cI_1' \cup \cI_1^*) \cap \wh{C}_2 
\; \mbox{are independent}\,.
\end{equation}

\medskip\n
We now prove our main claims (\ref{2.8}), (\ref{2.9}). We recall the notation from (\ref{1.35}). Since the $A_m$, $m \in I_{(n+1)}$, are decreasing and $\cT$-adapted, cf.~(\ref{2.5}), we see that
\begin{equation}\label{2.65}
\begin{array}{l}
\IP \Big[\bigcap\limits_{m \in T_{(n+1)}} A^u_m\Big] \stackrel{(\ref{2.6})}{=} \IP \Big[\bigcap\limits_{\ov{m}_1 \in T_{(n)}} A^u_{\ov{m}_1,1} \cap \bigcap\limits_{\ov{m}_2 \in T_{(n)}} A^u_{\ov{m}_2,2} \Big] \le
\\[3ex]
\IP \Big[\bigcap\limits_{\ov{m}_1 \in T_{(n)}} A_{\ov{m}_1,1}(\cI^u \
 \cap \wh{C}_1) \cap  \bigcap\limits_{\ov{m}_2 \in T_{(n)}} A_{\ov{m}_2,2} ((\cI '_1 \cup \cI^*_1) \cap \wh{C}_2) \Big]  \stackrel{(\ref{2.63})}{=}
 \\[3ex]
 \IP \Big[\bigcap\limits_{\ov{m}_1 \in T_{(n)}} A^u_{\ov{m}_1,1}\Big] \,\IP  \Big[\bigcap\limits_{\ov{m}_2 \in T_{(n)}} A_{\ov{m}_2,2} (\cI '_1 \cup \cI^*_1)\Big]\,.
\end{array}
\end{equation}
Note that by (\ref{2.42}), (\ref{2.60}) we have
\begin{equation}\label{2.66}
\underset{\ell \ge 2}{\mbox{\f $\dis\bigcup$}}
\, \cI '_\ell = V \cap \Big( \underset{{w \in {\rm Supp} \,\wt{\sigma} '}}{\mbox{\f $\dis\bigcup$}}
{\rm range}(w)\Big)\,,
\end{equation}

\n
and it follows from Lemma \ref{lem2.4} and (\ref{2.62}) that
\begin{equation}\label{2.67}
\begin{array}{l}
\IP \Big[ \bigcap\limits_{\ov{m}_2 \in T_{(n)}} A_{\ov{m}_2,2} (\cI '_1 \cup \cI^*_1)\Big] \le \IP  \Big[ \bigcap\limits_{\ov{m}_2 \in T_{(n)}} A_{\ov{m}_2,2} \Big(\bigcup\limits_{\ell \ge 1}\,\cI '_\ell \Big)\Big] + \ov{\IP} [N^*_1 < N']  \stackrel{(\ref{2.59}),(\ref{2.62})}{\le}
 \\[3ex]
\IP \Big[ \bigcap\limits_{\ov{m}_2 \in T_{(n)}} A^{u'}_{\ov{m}_2,2} \Big] + 2 \exp\Big\{ -2 u' \;\mbox{\f $\dis\frac{K}{(n+1)^3}$} \;L^\nu_n \, \ell_0^{\nu '}\Big\},
\\[2ex]
\mbox{when} \; u-u' \ge c_1 \;\mbox{\f $\dis\frac{\sqrt{K}}{(n+1)^{3/2}}$} \;\ell_0^{-\frac{(\nu - \nu ')}{2}} u'\,.
\end{array}
\end{equation}

\n
Inserting this inequality in (\ref{2.65}) yields (\ref{2.8}) (and of course (\ref{2.8'}) as well). We now turn to the proof of (\ref{2.9}). Since the $B_m$, $m \in T_{(n+1)}$, are increasing $\cT$-adapted, we find with Lemma \ref{lem2.4} that
\begin{equation}\label{2.68}
\begin{array}{l}
\IP \Big[\bigcap\limits_{m \in T_{(n+1)}} B^{u^\prime}_m\Big] \stackrel{(\ref{2.62})}{=} \IP \Big[\bigcap\limits_{m \in T_{(n+1)}} B_m \Big(\bigcup\limits_{\ell \ge 1} \, \cI '_\ell\Big)\Big] \underset{{\rm Lemma}\,\ref{lem2.4}}{\stackrel{(\ref{2.62}), (\ref{2.66})}{\le}}
\\[3ex]
\IP \Big[\bigcap\limits_{m \in T_{(n+1)}} B_m(\cI^{'}_1  \cup \cI^{*}_1)\Big] + \ov{\IP} [N^*_1 < N'] \stackrel{(\ref{2.64})}{=}
\\[3ex]
\IP\Big[ \bigcap\limits_{\ov{m}_1 \in T_{(n)}} B_{\ov{m}_1,1} (\cI '_1 \cup \cI^*_1) \Big] \;\IP\Big[ \bigcap\limits_{\ov{m}_2 \in T_{(n)}} B_{\ov{m}_2,2} (\cI'_1 \cup \cI_1^*)\Big] +  \ov{\IP} [N^*_1 < N'] \stackrel{(\ref{2.62})}{\le}
 \\[3ex]
 \IP \Big[\bigcap\limits_{\ov{m}_1 \in T_{(n)}} B^u_{\ov{m}_1,1}\Big] \,\IP  \Big[\bigcap\limits_{\ov{m}_2 \in T_{(n)}} B^u_{\ov{m}_2,2}\Big] +  \ov{\IP} [N^*_1 < N]\,.
\end{array}
\end{equation}

\n
The claim (\ref{2.9}) now follows from (\ref{2.59}), and this completes the proof of Theorem \ref{theo2.1}. \hfill $\square$

\begin{remark}\label{rem2.5} \rm ~

\medskip\n
1) With the help of Lemma \ref{lem2.3} and (\ref{2.36}), one can also derive a companion lower bound to (\ref{2.33}) or (\ref{2.40}) showing that for $\ell \ge 2$, 
\begin{equation*}
\wt{\xi}^\prime_\ell \ge u' \,{\rm cap}(W) \Big(c \dis\frac{{\rm cap}(W)}{L^\nu_{n+1}}\Big)^{\ell - 1} \, \Gamma^{\otimes \ell} \,.
\end{equation*}

\n
Combined with (\ref{2.40}) we see that the total mass of $\wt{\sigma} '$, see (\ref{2.42}), has finite expectation, but that large enough exponential moments of the total mass of $\wt{\sigma} '$ are divergent, (this is very much in line with the identity below (\ref{2.51})).

\medskip
As a result one cannot hope to find a coupling of $\wt{\sigma} '$ and $\wt{\zeta}^*_1$ such that $\wt{\zeta}^*_1$ globally dominates $\wt{\sigma} '$, see Lemma \ref{lem2.4}.

\medskip 
Incidentally in the construction of the coupling from Lemma \ref{lem2.4}, one actually has flexibility in the choice of the joint distribution of $N'$ and $N^*_1$. One might possibly take advantage of this feature to improve the bound (\ref{2.49}), and as a result improve the quality of the remainder terms in (\ref{2.8}), (\ref{2.9}).

\medskip\n
2) The renormalization step conducted above differs from what was done in \cite{Szni10a}, \cite{SidoSzni09a}, \cite{SidoSzni09b}. In essence, the sprinkling technique we have employed here, aims at dominating for the most part, with the Poisson variable $N^*_1$, the ``long range interaction terms ", which are accounted for in the variable $N'$ of (\ref{2.45}). The variable $N'$ has a compound Poisson distribution, where the compounding law is supported on the set of integers at least equal to $2$, see below (\ref{2.51}). Quite naturally, the quality of our bounds deteriorates when the parameter of $N^*_1$ becomes small. This explains why in place of choosing $W=V$ in (\ref{2.54}), we instead pick $W$ rather big, at the expense of assuming $u-u'$ not too small. 

\medskip\n
3) Althoug Theorem \ref{theo2.1} will be general enough for our purpose here, let us note that the crucial Lemma \ref{lem2.4} offers a domination statement that goes beyond asserting that $\bigcup_{\ell \ge 2} \cI'_\ell$ is dominated with ``high probability'' by $\cI^*_1$. For instance the proof of Theorem \ref{theo2.1} yields similar inequalities as (\ref{2.8}), (\ref{2.9}) for decreasing events $A_m$, $m \in T_{(n+1)}$, or increasing events $B_m$, $m \in T_{(n+1)}$, pertaining to the occupancy of both sets of points and edges contained in $\wt{C}_{m,\cT}$. In place of (\ref{1.36}) one naturally defines in this context $A^u_m$, or $B^u_m$, by the consideration of the set of points visited and edges crossed by the trajectories of the interlacement point process with labels at most $u$. The proof of Theorem \ref{theo2.1} then yields 

\bigskip\n
{\bf Corollary 2.1'.} {\it ($K > 0$, $0 < \nu' < \nu$)

\medskip
When $\ell_0 \ge c(K,\nu')$, for all $n \ge 0$, $\cT \in \Lambda_{n+1}$, for all in the above generalized sense $\cT$-adapted collections $A_m$, $m \in T_{(n+1)}$, of decreasing events, respectively $B_m$, $m \in T_{(n+1)}$, of increasing events, and for all $0 < u' < u$ satisfying (\ref{2.7}), the corresponding inequalities to (\ref{2.8}), (\ref{2.9}) hold.}

\vspace{-1ex}
\hfill $\square$

\end{remark}

\medskip
We will now derive the key decoupling inequalities. Given $K > 0$, $0 < \nu ' < \nu$, and $\ell_0 \ge c(K,\nu ')$ such that Theorem \ref{theo2.1} applies we define for any $u_0 > 0$ the increasing and decreasing sequences of levels
\begin{equation}\label{2.69}
\begin{array}{l}
u^+_n  = \prod\limits_{0 \le k < n} 
\Big(1 + c_1 \,\mbox{\f $\dis\frac{\sqrt{K}}{(k+1)^{3/2}}$} \;\ell_0^{-\frac{(\nu - \nu^\prime)}{2}}\Big) \,u_0, 
\\[3ex]
u^-_n  =\prod\limits_{0 \le k < n} 
\Big(1 + c_1 \,\mbox{\f $\dis\frac{\sqrt{K}}{(k+1)^{3/2}}$} \;\ell_0^{-\frac{(\nu - \nu^\prime)}{2}}\Big)^{-1} \,u_0,
\end{array}
\end{equation}

\n
so that $u^+_n$, $n \ge 0$, is increasing, $u^-_n$, $n \ge 0$, is decreasing, and they satisfy 
\begin{equation*}
u^+_{n+1} - u^+_n = c_1 \,\mbox{\f $\dis\frac{\sqrt{K}}{(n+1)^{3/2}}$} \;\ell_0^{-\frac{(\nu - \nu^\prime)}{2}}\; u^+_n,  \;\; \mbox{and} \;\;   u^-_n - u^-_{n+1}  = c_1 \,\mbox{\f $\dis\frac{\sqrt{K}}{(n+1)^{3/2}}$} \;\ell_0^{-\frac{(\nu - \nu^\prime)}{2}} \,u_{n+1}^-\, ,
\end{equation*}
for $n \ge 0$, as well as $u^+_0 = u_0 = u^-_0$.

\medskip
These sequences also have positive finite limits respectively equal to
\begin{equation}\label{2.70}
\begin{array}{l}
u^+_\infty  = \prod\limits_{k \ge 0} 
\Big(1 + c_1 \,\mbox{\f $\dis\frac{\sqrt{K}}{(k+1)^{3/2}}$} \;\ell_0^{-\frac{(\nu - \nu^\prime)}{2}}\Big) \,u_0, 
\\[3ex]
u^-_\infty  = \prod\limits_{k \ge 0} 
\Big(1 + c_1 \,\mbox{\f $\dis\frac{\sqrt{K}}{(k+1)^{3/2}}$} \;\ell_0^{-\frac{(\nu - \nu^\prime)}{2}}\Big)^{-1} \,u_0,
\end{array}
\end{equation}

\n
We can now state and prove the main result of this section.

\begin{theorem}\label{theo2.6} (Decoupling Inequalities; $K > 0$, $0 < \nu^\prime < \nu$, $\ell_0 \ge c(K,\nu^\prime)$)

\medskip
For any $u_0 > 0$, $n \ge 0$, $\cT \in \Lambda_n$, and all $\cT$-adapted collections $A_m$, $m \in T_{(n)}$, of decreasing events on $\{0,1\}^E$, respectively $B_m$, $m \in T_{(n)}$, of increasing events on $\{0,1\}^E$, one has:
\begin{equation}\label{2.71}
\begin{array}{l}
\IP\Big[\bigcap\limits_{m \in T_{(n)}}\,
A_m^{u^+_\infty}\Big]  \le \IP \Big[\bigcap\limits_{m \in T_{(n)}} \,A_m^{u^+_n}\Big] \le \prod\limits_{m \in T_{(n)}}\, (\IP[A_m^{u_0}] + \ve(u_0))\,,
\end{array}
\end{equation}
\begin{equation}\label{2.72}
\begin{array}{l}
\;\; \IP\Big[\bigcap\limits_{m \in T_{(n)}}\, B_m^{u^-_\infty}\Big]  \le \IP \Big[\bigcap\limits_{m \in T_{(n)}}\, B_m^{u^-_n}\Big] \le \prod\limits_{m \in T_{(n)}}\, (\IP[B_m^{u_0}] + \ve(u_\infty^-))\,, 
\end{array}
\end{equation}
where we have set
\begin{equation}\label{2.73}
\ve(u) =  \dis\frac{2e^{-K u L_0^\nu \,\ell_0^{\nu^\prime}}}{1 - e^{-K u \,L_0^\nu\,\ell_0^{\nu^\prime}}}, \;\; \mbox{for $u > 0$, (note that  $v > 0 \rightarrow \frac{e^{-v}}{1-e^{-v}}$ is decreasing)}\,.
\end{equation}
\end{theorem}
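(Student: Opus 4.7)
The plan is to prove both (\ref{2.71}) and (\ref{2.72}) by induction on $n$, using Theorem \ref{theo2.1} as the driving engine. The left-hand inequalities follow from elementary monotonicity: since $u \mapsto \cI^u$ is stochastically increasing, the map $u \mapsto \IP[A^u]$ is non-increasing for any decreasing event $A \subseteq \{0,1\}^E$, and symmetrically $u \mapsto \IP[B^u]$ is non-decreasing for any increasing $B$; the same monotonicity passes to intersections. Combined with $u_\infty^+ \ge u_n^+$ and $u_\infty^- \le u_n^-$ from (\ref{2.69})--(\ref{2.70}), this immediately yields the leftmost inequality in each of (\ref{2.71}), (\ref{2.72}).

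For the right-hand inequalities, I would induct on $n$. The base case $n=0$ is trivial since $T_{(0)}$ is a singleton and $u_0^+ = u_0 = u_0^-$. For the inductive step $n \to n+1$ in the decreasing case, given $\cT \in \Lambda_{n+1}$ and a $\cT$-adapted decreasing family $A_m$, $m \in T_{(n+1)}$, I split via $\cT_1, \cT_2 \in \Lambda_n$ and subcollections $A_{\ov{m}_i,i}$ as in (\ref{2.6}), and apply the companion inequality (2.8$'$) with $u = u_{n+1}^+$ and $u' = u_n^+$. The recursive definition (\ref{2.69}) is engineered precisely so that the threshold condition (\ref{2.7}) of Theorem \ref{theo2.1} holds at this step with equality. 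The induction hypothesis then bounds each of the two probabilities over $T_{(n)}$ appearing on the right-hand side. The increasing case is handled identically using (\ref{2.9}) with $u = u_n^-$ and $u' = u_{n+1}^-$, noting that (\ref{2.69}) gives $u_n^- = (1 + c_1 \sqrt{K}/(n+1)^{3/2}\ell_0^{-(\nu-\nu')/2}) u_{n+1}^-$.

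The crux of the proof, and what I expect to be the main obstacle, is the absorption of the cascade of remainder terms $r_n = 2 \exp\{-2 u_n^\pm K L_n^\nu \ell_0^{\nu'}/(n+1)^3\}$ produced by iterating Theorem \ref{theo2.1}. The natural strategy is to strengthen the induction hypothesis to
\begin{equation*}
\IP\Big[\bigcap_{m \in T_{(n)}} A_m^{u_n^+}\Big] \le \prod_m (\IP[A_m^{u_0}] + \ve_n)
\end{equation*}
for an auxiliary increasing sequence $\ve_n \nearrow \ve_\infty \le \ve(u_0)$, reducing the inductive step to the combinatorial inequality
\begin{equation*}
\prod_{m \in T_{(n+1)}}(a_m + \ve_n) + r_n \prod_{\ov{m}_1 \in T_{(n)}}(1 + \ve_n) \le \prod_{m \in T_{(n+1)}}(a_m + \ve_{n+1}).
\end{equation*}
The key quantitative observation is that $L_n^\nu = \ell_0^{n\nu} L_0^\nu$ grows super-exponentially in $n$, so that for $\ell_0 \ge c(K,\nu')$ taken large enough, the remainders $r_n$ decay at rate $\exp\{-c \ell_0^{n\nu}\}$, which dwarfs the competing double-exponential factors like $(1+\ve_n)^{2^n}$ coming from the size of $T_{(n)}$. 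A concrete choice such as $\ve_n = \ve(u_0)(1 - q^n)$ with $q \in (0,1)$ suitably tuned allows one to verify the step inequality via the mean-value bound $\ve_{n+1}^{2^{n+1}} - \ve_n^{2^{n+1}} \ge (\ve_{n+1}-\ve_n)\, 2^{n+1} \ve_n^{2^{n+1}-1}$, the telescoping being compatible with the geometric-series form $\ve(u) = 2 \sum_{k \ge 1} e^{-k K u L_0^\nu \ell_0^{\nu'}}$ that underlies (\ref{2.73}). The increasing case proceeds identically, with $\ve(u_\infty^-)$ in place of $\ve(u_0)$ and $u_{n+1}^-$ in place of $u_n^+$ inside $r_n$.
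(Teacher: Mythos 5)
Your proposal is correct and follows essentially the paper's route: the left inequalities come from monotonicity of $u \mapsto \IP[A^u]$, $u\mapsto\IP[B^u]$ together with $u^+_n \le u^+_\infty$, $u^-_\infty \le u^-_n$, and the right inequalities by induction on $n$, applying (2.8') resp.\ (2.9) at the top split $\cT_1,\cT_2$ with the consecutive levels from (\ref{2.69}), for which (\ref{2.7}) holds by construction. The only divergence is the bookkeeping used to absorb the remainder $r_n$: the paper strengthens the induction hypothesis to $\prod_m\big(\IP[A^{u_0}_m]+\sum_{0\le k<n} r_k^{1/2^{k+\cdot}}\big)$ and absorbs $r_n$ at each step via the elementary bound $\prod_{m\in T_{(n+1)}}(b_m+\delta)\ge \prod_m b_m+\delta^{2^{n+1}}$ with $\delta=r_n^{1/2^{n+1}}$, only at the very end bounding the accumulated sum of roots by the geometric series $2\sum_{k\ge 0}e^{-(k+1)KuL_0^\nu\ell_0^{\nu'}}=\ve(\cdot)$, using $(\ell_0^\nu/2)^k\ge (k+1)^4$. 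Your prescribed schedule $\ve_n=\ve(1-q^n)$ with the mean-value bound can be made to close, but be aware it is more delicate than ``$r_n$ dwarfs the double-exponential factors'': the available increment $\ve_{n+1}^{2^{n+1}}-\ve_n^{2^{n+1}}$ is itself doubly exponentially small, of order $\ve(u_0)^{2^{n+1}}$, and the comparison with $r_n(1+\ve_n)^{2^n}$ works only because $\log(1/\ve(u_0))$ and the exponent in $r_n$ are proportional to the \emph{same} quantity $Ku L_0^\nu\ell_0^{\nu'}$, so that everything reduces to $2\ell_0^{n\nu}/(n+1)^3 \gtrsim 2^{n+1}$, which $\ell_0\ge c(K,\nu')$ guarantees; one should also dispose of the trivial case $\ve(u_0)\ge 1$ (resp.\ $\ve(u_\infty^-)\ge 1$) beforehand. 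The paper's root trick sidesteps this comparison entirely, which is what makes its verification shorter.
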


\begin{proof}
We begin with the proof of (\ref{2.72}). The first inequality is immediate since $u^-_\infty \le u^-_n$, and the events $B_m$ are increasing. As for the second inequality, we first prove by induction on $n$ that
\begin{equation}\label{2.74}
\begin{array}{l}
\IP\Big[\bigcap\limits_{m \in T_{(n)}} \,B_m^{u^-_n}\Big]  \le \prod\limits_{m \in T_{(n)}}\,\big(\IP[B_m^{u_0}]  + \dsl_{0 \le k  < n} \,\big(2 \,e^{-\frac{2K}{(k+1)^3}\,u^-_{k+1}\,L_k^\nu \, \ell_0^{\nu^\prime}}\big)^{\frac{1}{2^{k+1}}}\big)\,.
\end{array}
\end{equation}

\n
The claim trivially holds when $n=0$. If it is true for $n$, we find by (\ref{2.9}) that for $\cT \in \Lambda_{n+1}$, and $B_m$, $m \in T_{(n+1)}$, an increasing $\cT$-adapted collection, one has, with hopefully obvious notation:
\begin{equation}\label{2.75}
\begin{array}{l}
\IP\Big[\bigcap\limits_{m \in T_{(n+1)}} B_m^{u^-_{n+1}}\Big]  \le \IP\Big[\bigcap\limits_{\ov{m}_1 \in T_{(n)}} B_{\ov{m}_1,1}^{u^-_n}\Big] \; \IP\Big[\bigcap\limits_{\ov{m}_2 \in T_{(n)}} B_{\ov{m}_2,2}^{u^-_n}\Big] + 2 \,e^{-\frac{2K}{(n+1)^3}\,u^-_{n+1}\,L_n^\nu \, \ell_0^{\nu^\prime}}
\\[3ex]
\underset{\rm hypothesis}{\stackrel{\rm induction}{\le}} \prod\limits_{m \in T_{(n+1)}} \big(\IP[B_m^{u_0}] + \dsl_{0 \le k < n} \big(2 \,e^{-\frac{2K}{(k+1)^3}\,u^-_{k+1}\,L_k^\nu \, \ell_0^{\nu^\prime}}\big)^{\frac{1}{2^{k+1}}}\big) + 2 \,e^{-\frac{2K}{(n+1)^3}\,u^-_{n+1}\,L_n^\nu \, \ell_0^{\nu^\prime}}
\\[3ex]
\le \prod\limits_{m \in T_{(n+1)}} \big(\IP[B_m^{u_0}] + \dsl_{0 \le k < n+1} \big(2 \,e^{-\frac{2K}{(k+1)^3}\,u^-_{k+1}\,L_k^\nu \, \ell_0^{\nu^\prime}}\big)^{\frac{1}{2^{k+1}}}\big)\,,
\end{array}
\end{equation}
and this completes the proof of (\ref{2.74}) by induction.

\pagebreak
To finish the proof of (\ref{2.72}), we now observe that when $\ell_0 \ge c(K,\nu ')$, then for all $k \ge 0$, $(\frac{\ell_0^\nu}{2})^k \ge 2^{4k} \ge (k+1)^4$. Hence the series in the product in the right-hand side of (\ref{2.74}) is smaller than
\begin{equation*}
\dsl_{k \ge 0} \,2 \,e^{-\frac{K}{(k+1)^3} \;u_\infty^- (\frac{\ell_0^\nu}{2})^k\, L_0^\nu \,\ell_0^{\nu^\prime}} \le 2 \dsl_{k \ge 0} \,e^{- (k+1)\, K \,u_\infty^- \, L_0^\nu \,\ell_0^{\nu^\prime}} = \ve (u^-_\infty)\,,
\end{equation*}

\n
and this completes the proof of (\ref{2.72}).

\medskip
The proof of (\ref{2.71}) is analogous. Instead of (\ref{2.74}), one shows by induction on $n$, with the help of (\ref{2.8'}), that
\begin{equation}\label{2.76}
\begin{array}{l}
\IP\Big[\bigcap\limits_{m \in T_{(n)}} A_m^{u^+_n}\Big]  \le \prod\limits_{m \in T_{(n)}} \, \Big(\IP [ A^{u_0}_m] + \dsl_{0 \le k < n} \,\big(2 \,e^{-\frac{2K}{(k+1)^3}\,u^+_k\,L_k^\nu \, \ell_0^{\nu^\prime}}\big)^{\frac{1}{2^k}}\Big)\,,
\end{array}
\end{equation}

\medskip\n
and obtains ({2.71}) as a consequence, (in fact the argument below (\ref{2.75}) yields that (\ref{2.71}) even holds with $2K$ in place of $K$ in the definition of $\ve(u_0)$ in (\ref{2.73})).
\end{proof}

\begin{remark}\label{rem2.7} \rm ~

\medskip\n
1) As already mentioned, in the important special case $E = \IZ^{d+1}$, $d \ge 2$, (so that $\alpha = d$, $\beta = 2$), one can replace the distance $d(\cdot,\cdot)$ with the sup-norm distance $d_\infty(\cdot,\cdot)$ and the balls relative to $d(\cdot,\cdot)$ with balls relative to $d_\infty(\cdot,\cdot)$, in the above theorem and of course in the definition of $\Lambda_n$, and of $\cT$-adapted collections when $\cT \in \Lambda_n$.

\bigskip\n
2) As companions to (\ref{2.71}), (\ref{2.72}), the FKG-Inequality, see (\ref{1.34}), implies that 
\begin{equation}\label{2.77}
\begin{array}{l}
\IP\Big[\bigcap\limits_{m \in T_{(n)}} A_m^{u^+_\infty}\Big]  \ge \prod\limits_{m \in T_{(n)}} \,\IP [A_m^{u^+_\infty}]\,,
\end{array}
\end{equation}
and
\begin{equation}\label{2.78}
\begin{array}{l}
\IP\Big[\bigcap\limits_{m \in T_{(n)}} B_m^{u^-_\infty}\Big]  \ge \prod\limits_{m \in T_{(n)}} \,\IP [B_m^{u^-_\infty}]\,.
\end{array}
\end{equation}

\medskip\n
Theorem \ref{theo2.6} provides an upper bound for the expressions in the left-hand side of (\ref{2.77}), (\ref{2.78}). In a sense it offers a partial substitute for the BK Inequality, which plays a key role for Bernoulli percolation, cf.~\cite{Grim99}, but  not for interlacement percolation, so far, see Remark \ref{rem1.5} 3).  
\hfill $\square$

\end{remark}

\section{Cascading events}
\setcounter{equation}{0}

We want to apply the decoupling inequalities of Theorem \ref{theo2.6} to control the probability of certain events on $\{0,1\}^E$, which pertain to the trace of the interlacement $\cI^u$ in a large box. For this purpose the cascading property plays an important role. In essence, it enables us to cover such an event concerning the state of $\cI^u$ in a ball of size of order $L_n$, with a not too large family of events, which come each as intersections of $2^n$ events depending on the respective traces of $\cI^u$ in well-separated balls of size of order $L_0$. The decoupling inequalities of Theorem \ref{theo2.6} will yield upper bounds on the probability of such intersections. These bounds will compete against the combinatorial complexity of the family of dyadic tree embeddings used to cover the original event. In Proposition \ref{prop3.2} we present two examples of cascading families of events, which play an important role in Section 4 and 5. We discuss in Remark \ref{rem3.3} another example in the case of $\IZ^{d+1}$, $d \ge 2$, which is due to Teixeira \cite{Teix10}, as well as a modification of this example adapted to the general context of the present work. The main consequences of the cascading property and the decoupling inequalities appear in Theorem \ref{theo3.4}, as well as Corollary \ref{cor3.5} and  \ref{cor3.7}. In the special case $E = \IZ^{d+1}$, $d \ge 2$, the distance $d(\cdot,\cdot)$ and the balls $B(x,r)$ can be replaced with the sup-norm distance $d_\infty(\cdot,\cdot)$ and the corresponding balls $B_\infty(\cdot,\cdot)$. 

\smallskip
\begin{definition}\label{def3.1}
A family $\cG = (G_{x,L})_{x \in E, L \ge 1 \;{\rm integer}}$ of events on $\{0,1\}^E$ has the cascading property (or cascades) with complexity at most $\lambda > 0$, when
\begin{equation}\label{3.1}
\mbox{$G_{x,L}$ is $\sigma(\Psi_{x'}, x' \in B(x,10L))$-measurable for each $x \in E$, $L \ge 1$}\,,
\end{equation}

\smallskip\n
(the notation $\Psi_x$ is defined above (\ref{1.34})), 

\medskip\n
and for each $\ell$ multiple of $100$, $x \in E$, $L \ge 1$, there exists  $\Lambda \subseteq E$, such that
\begin{align}
& \Lambda \subseteq B(x,9\ell L), \label{3.2}
\\[2ex]
& |\Lambda | \le c(\cG, \lambda) \, \ell^\lambda \,, \label{3.3}
\\[2ex]
& G_{x,\ell L} \subseteq \underset{x',x'' \in \Lambda; \,d(x',x'') \ge \frac{\ell}{100}\,L}{\mbox{\f $\dis\bigcup$}} \;G_{x',L} \cap G_{x'',L}\,.\label{3.4}
\end{align}
\end{definition}

\medskip
When the family of events depends on an additional parameter, we say that it has the {\it uniform cascading property} (or {\it cascades uniformly} ) {\it with complexity at most} $\lambda$, when for each fixed value of the parameter, the cascading property with complexity at most $\lambda$ holds, and in addition the constant in (\ref{3.3}) can be chosen uniformly in the parameter.

\medskip
We will now provide examples of such families, see Proposition \ref{prop3.2} and Remark \ref{3.3} below. The first two examples play an important role, respectively in Section 4 and 5. The first example corresponds to the family $\cA = (A_{x,L})_{x \in E, L \ge 1\;{\rm integer}}$, where
\begin{equation}\label{3.6}
\begin{split}
A_{x,L} = \big\{ &\, \mbox{$\sigma \in \{0,1\}^E; \; B(x,L)$ is linked to $\partial_{\rm int} \, B(x,2L)$}
\\
&\mbox{by a path where $\sigma$ vanishes\big\}}.
\end{split}
\end{equation}
In particular, we see that in the notation of (\ref{0.10}) and (\ref{1.36}), for $u \ge 0$, 
\begin{equation}\label{3.7}
A^u_{x,L} \stackrel{(\ref{1.36})}{=} A_{x,L} (\cI^u) = \{B(x,L) \stackrel{\cV^u}{\longleftrightarrow} \partial_{\rm int} B(x,2L)\}\,.
\end{equation}

\medskip\n
To describe the second example, we consider the family of ``half-planes'' $\cP$ in $E$ of the form
\begin{equation}\label{3.8}
\cP = \{y(n); n \ge 0\} \times \IZ \subseteq E\,,
\end{equation}

\n
where $y(n)$, $n \ge 0$, is a semi-infinite geodesic in $G$ (recall $E = G \times \IZ)$, that is:
\begin{equation}\label{3.9}
d_G (y(n),y(m)) = | n-m|, \;\mbox{for all} \; n,m \ge 0\,.
\end{equation}

\n
It is well-known, see for instance Theorem 3.1 of \cite{Watk86}, that for each $y \in G$, one can find such a $y(n), n \ge 0$, with $y(0) = y$. Given such a half-plane $\cP$ we say that a finite sequence $x_0,\dots , x_N$ in $\cP$ is a $*$-path, when for each $0 \le i < N$, $x_i \not= x_{i+1}$, and the $G$-projections of $x_i$ and $x_{i+1}$ lie at $d_G$-distance at most $1$, and a similar condition holds for the $\IZ$-projections.

\medskip
We then define the family of events depending on the additional parameter $\cP$ varying over all possible ``half-planes'' in $E$, $\cB = (B_{x,L,\cP})$, where:
\begin{equation}\label{3.10}
\begin{split}
B_{x,L,\cP} & = \big\{\mbox{$\sigma \in \{0,1\}^E; \; B(x,L)$ is linked to $\partial_{\rm int} \, B(x,2L)$ by a $*$-path in $\cP$,}
\\
&\qquad \qquad \qquad \qquad  \! \mbox{where $\sigma$ takes the value $1\big\}$, when $x \in \cP$},
\\[1ex]
& = \phi, \;\mbox{when $x \notin \cP$}\,.
\end{split}
\end{equation}

\n
In particular when $x \in \cP$, $B^u_{x,L,\cP}$ coincides with the event that appears in (\ref{0.13}).

\begin{proposition}\label{prop3.2}
\begin{align}
&\mbox{$\cA$ is a family of decreasing events on $\{0,1\}^E$ that cascades with} \label{3.11} 
\\[-0.5ex]
&\mbox{complexity at most $\alpha + \frac{\beta}{2}$}\,. \nonumber
\\[2ex]
&\mbox{$\cB$ is a family of increasing events on $\{0,1\}^E$ that cascades uniformly with} \label{3.12} 
\\[-0.5ex]
&\mbox{complexity at most  $\frac{\beta}{2}$}\,. \nonumber
\end{align}
\end{proposition}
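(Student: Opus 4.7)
The plan is as follows.

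Monotonicity of $\cA$ (decreasing) and $\cB$ (increasing) is immediate from (\ref{3.6}) and (\ref{3.10}). For the measurability requirement (\ref{3.1}), given any path (resp.\ *-path) witnessing $A_{x,L}$ (resp.\ $B_{x,L,\cP}$), I truncate it at its first exit from $B(x,2L)$: the predecessor $y_{T-1}$ has an $E$-neighbor outside $B(x,2L)$, so $y_{T-1}\in\partial_{\rm int}B(x,2L)$, and the truncated path lies in $B(x,2L)$. In the *-case a diagonal exit step $y_T - y_{T-1}$ must fail to be in $B(x,2L)$ because of a sign change in either the $n$- or the $z$-coordinate; projecting onto that coordinate supplies an $E$-neighbor of $y_{T-1}$ outside $B(x,2L)$ (and in $\cP$), so the same argument goes through. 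Hence $A_{x,L}$ and $B_{x,L,\cP}$ are $\sigma(\Psi_y : y\in B(x,2L))$-measurable, hence $\sigma(\Psi_y : y\in B(x,10L))$-measurable.

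For the cascading of $\cA$, let $\Lambda$ be a maximal $L$-separated subset of $B(x,9\ell L)$. By the volume estimate (\ref{1.11}) and a standard packing argument, $|\Lambda| \le c\,\ell^{\alpha+\beta/2}$, giving $\lambda=\alpha+\beta/2$. For $\omega\in A_{x,\ell L}$, choose a vacant path $\pi=(y_1,\ldots,y_N)$ with $y_1\in B(x,\ell L)$, $y_N\in\partial_{\rm int}B(x,2\ell L)$. Since $\beta\ge 2$ makes $d$ a metric with $d(y_j,y_{j+1})\le 1$, the map $j\mapsto d(y_1,y_j)$ changes by at most $1$ per step; since $d(y_1,y_N)\ge \ell L-1$, there is an index $m$ with $d(y_1,y_m)\in[\ell L/20,\ell L/20+1]$. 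Maximality yields $x',x''\in\Lambda$ with $d(x',y_1), d(x'',y_m)<L$, so $d(x',x'')\ge d(y_1,y_m)-2L\ge \ell L/100$ for $\ell\ge 100$. Truncating $\pi$ forward from $y_1$ at the first exit of $B(x',2L)$ (possible since $d(y_N,x')\ge \ell L - L - 1 \ge 2L$) witnesses $A_{x',L}$; running $\pi$ backward from $y_m$ until the first exit of $B(x'',2L)$ (using $d(y_1,x'')\ge \ell L/20 - L \ge 2L$) witnesses $A_{x'',L}$, proving (\ref{3.4}).

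For the cascading of $\cB$ the difficulty is the improved complexity $\beta/2$: a naive $L$-cover of the two-dimensional set $\cP\cap B(x,9\ell L)$ already has order $\ell^{1+\beta/2}$ points. The saving observation is that any *-path $\pi$ witnessing $B_{x,\ell L,\cP}$ must cross the two intermediate spherical slices $S_i:=\partial_{\rm int}B(x,r_i)\cap\cP$ for $r_1:=3\ell L/2$ and $r_2:=7\ell L/4$. Writing $\cP=\{y(n)\}\times\IZ$ in coordinates $(n,z)$, the slice $S_i$ is contained in two ``vertical'' faces (fixed $n=n_0\pm r_i$, $|z-z_0|\le r_i^{\beta/2}$) of cardinality $\sim r_i^{\beta/2}$ each, and two ``horizontal'' faces (fixed $z=z_0\pm r_i^{\beta/2}$, $|n-n_0|\le r_i$) of cardinality $\sim r_i$ each. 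An $L$-separated subset of $S_i$ requires $|z|$-spacing $\ge L^{\beta/2}$ on vertical faces and $|n|$-spacing $\ge L$ on horizontal ones, giving at most $c(r_i/L)^{\beta/2}+c(r_i/L)\le c\,\ell^{\beta/2}$ points (using $\beta\ge 2$). Take $\Lambda$ to be the union of maximal $L$-separated subsets of $S_1$ and $S_2$; then $|\Lambda|\le c\,\ell^{\beta/2}$, with a constant depending only on $\alpha,\beta$, giving uniformity in $\cP$. Given $\omega\in B_{x,\ell L,\cP}$, let $y_{m_i}$ be the last point of $\pi$ in $B(x,r_i)$ before its first exit; then $y_{m_i}\in S_i$ and (as each *-step has $d$-size $\le 1$) $d(y_{m_i},x)\ge r_i-1$, so choosing $x',x''\in\Lambda$ within $L$ of $y_{m_1},y_{m_2}$ yields $d(x',x'')\ge r_2-r_1-2L-2 = \ell L/4 - 2L - 2 \ge \ell L/100$. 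Tracing $\pi$ forward (or backward) from $y_{m_i}$ and truncating at the first exit of $B(x',2L)$, resp.\ $B(x'',2L)$, witnesses $B_{x',L,\cP}$ and $B_{x'',L,\cP}$, the endpoint estimates $d(y_N,x'),d(y_N,x'')\ge 2L$ following as in the $\cA$-case from $d(y_N,x)\ge 2\ell L-1$.

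The main obstacle is this refined complexity count for $\cB$: rather than a volume-based cover of $\cP\cap B(x,9\ell L)$, one must restrict $\Lambda$ to the lower-dimensional shell slices $S_i$, exploiting the asymmetric $\beta/2$-scaling of the metric (\ref{1.10}) in the $\IZ$-direction. This makes the ``perimeter'' of a ball slice in $\cP$ grow only as $\ell^{\beta/2}$ rather than the ``volume'' $\ell^{1+\beta/2}$, producing exactly the claimed saving of one factor of $\ell$.
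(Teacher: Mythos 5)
Your proposal is correct and follows essentially the same strategy as the paper: cover suitable boundary sets with $L$-nets whose cardinality yields the complexity ($\alpha+\frac{\beta}{2}$ via the volume bound (\ref{1.11}) for $\cA$, and $\frac{\beta}{2}$ via the rectangle structure of $\cP\cap B(x,R)$ and the anisotropic metric for $\cB$), then use the crossing (star-)path to force the event at two net points at mutual distance of order $\ell L$. The only differences are cosmetic — you take a net of the whole ball plus an intermediate point along the path for $\cA$, and slices at radii $\frac{3}{2}\ell L$, $\frac{7}{4}\ell L$ for $\cB$, where the paper nets the interior boundaries at radii $\ell L$ and $\frac{3}{2}\ell L$ in both cases — and your verifications (projection of a diagonal exit step to certify membership in $\partial_{\rm int}B(\cdot,\cdot)$, uniformity of the constant in $\cP$) are sound.
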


\begin{proof}
We begin with the proof of (\ref{3.11}). The events $A_{x,L}$ are clearly decreasing and $\sigma(\Psi_{x'}$, $x' \in B(x,10L))$-measurable. Then note that given $x \in E$, $L \ge 1$, and $\ell$ multiple of $100$, one can find $x_1^i$, $1 \le i \le N_1$, in $\partial_{\rm int} B(x,\ell L)$ and $x_2^j$, $1 \le j \le N_2$, in $\partial_{\rm int} B(x,\frac{3}{2} \,\ell N)$, such that
\begin{equation}\label{3.13}
\begin{array}{l}
 N_1 \vee N_2 \le c \,\ell^{\alpha + \frac{\beta}{2}}, 
\end{array}
\end{equation}

\vspace{-5ex}
\begin{equation}\label{3.14}
\begin{array}{l}
 \partial_{\rm int} \,B(x,\ell L) \subseteq\bigcup\limits_{1 \le i \le N_1} B(x_1^i,L), \;\mbox{and} \;\partial_{\rm int} B\Big(x, \mbox{\f $\dis\frac{3}{2}$} \; \ell L\Big) \subseteq \bigcup\limits_{1 \le j \le N_2} \,B(x^j_2,L)\,.  
\end{array}
\end{equation}

\n
Indeed we first select a maximal collection in $\partial_{\rm int} B(x,\ell L)$ with mutual distance bigger than $L$, $x_1^i$, $1 \le i \le N_1$. The balls $B(x_1^i, \frac{L}{2})$ are thus pairwise disjoint, and each have, by (\ref{1.11}), volume at least $c \, L^{\alpha + \frac{\beta}{2}}$. Their union is contained in $B(x,2 \ell L)$ and thus has volume at most $c \,\ell^{\alpha + \frac{\beta}{2}} \,L^{\alpha + \frac{\beta}{2}}$, using (\ref{1.11}) once again. As a result $N_1 \le c \,\ell^{\alpha + \frac{\beta}{2}}$. In a similar fashion we can construct $x_2^j$, $1 \le j \le N_2$, so that (\ref{3.13}), (\ref{3.14}) holds. Note that by construction one has
\begin{equation}\label{3.15}
d(x^i_1, x^j_2) \ge \mbox{\f $\dis\frac{\ell}{2}$} \;L - 1 > \mbox{\f $\dis\frac{\ell}{5}$} \; L, \;\mbox{for} \; 1 \le i \le N_1, \, 1 \le j \le N_2\,,
\end{equation}

\n
and defining $\Lambda$ as the collection of points $x_1^i$ and $x_2^j$, we see that (\ref{3.2}), (\ref{3.3}), hold with $\lambda = \alpha + \frac{\beta}{2}$.

\medskip
Finally observe that any path from $B(x, \ell L)$ to $\partial_{\rm int} B(x,2 \ell L)$ must visit $\partial_{\rm int} B(x, \ell L)$ and thus enter one of the $B(x^i_1,L)$, then leave $B(x^i_1,2L)$, then visit $\partial_{\rm int} B(x, \frac{3}{2} \, \ell L)$ and thus enter one of the $B(x_2^j,L)$, and then leave $B(x^j_2, 2 L)$ before reaching $\partial_{\rm int} B(x,2 \ell L)$. This shows that
\begin{equation}\label{3.16}
\begin{array}{l}
A_{x_*,\ell L} \subseteq \bigcup\limits_{1 \le i \le N_1 \atop 1 \le j \le N_2} A_{x^i_1,L} \cap A_{x^i_2,L}\,,
\end{array}
\end{equation}
and due to (\ref{3.15}) thus completes the proof of (\ref{3.11}).

\pagebreak
Let us now turn to the proof (\ref{3.12}). The events $B_{x,L,\cP}$ are clearly increasing and $\sigma(\Psi_{x'},x' \in B(x,10L))$-measurable. When $x \in \cP$, $R \ge 1$, then $B(x,R) \cap \cP$ is a rectangle with horizontal sides of length comparable to $R$ up to a multiplicative constant, and vertical sides of height comparable to $R^{\frac{\beta}{2}}$ up to a multiplicative constant. The set $\partial^\cP_{\rm int} B(x,R)$ of vertices of $B(x,R) \cap \cP$ that neighbour $\cP \backslash B(x,R)$ is the union of two horizontal and one or two vertical sides of the rectangle $B(x,R) \cap \cP$. We then proceed as in the proof of (\ref{3.11}), in essence replacing $\partial_{\rm int} B(x,\ell L)$ and $\partial_{\rm int} B(x,\frac{3}{2} \, \ell L)$ by $\partial_{\rm int}^\cP \,B(x,\ell L)$ and $\partial^\cP_{\rm int} B(x,\frac{3}{2} \, \ell L)$. In a similar fashion we find $x^i_1$, $1 \le i \le N_1$, in $\partial^\cP_{\rm int} B(x,\ell L)$ and $x^j_2$, $1 \le j \le N_2$, in $\partial_{\rm int}^\cP\, B(x,\frac{3}{2} \, \ell L)$, such that
\begin{align}
& N_1 \vee N_2 \le c \,\ell^{\frac{\beta}{2}}, \label{3.17}
\\[2ex]
& \partial^\cP_{\rm int} \,B(x,\ell L) \subseteq \bigcup\limits_{1 \le i \le N_1} B(x_1^i,L),  \; \partial^\cP_{\rm int} B\Big(x, \mbox{\f $\dis\frac{3 \ell}{2}$} \;  L\Big) \subseteq \bigcup\limits_{1 \le j \le N_2} B(x^j_2,L)\,. \label{3.18}
\end{align}

\n
On the other hand when $x \notin \cP$, (and $B_{x,\ell L, \cP} = \phi$), we simply choose $N_1 = N_2 = 1$, and $x_1^i \in \partial_{\rm int} B(x, \ell L)$, $x^j_1 \in  \partial_{\rm int} B(x,\frac{3}{2} \, \ell L)$. Similar arguments as for the proof of (\ref{3.11}) show that (\ref{3.12}) holds. 
\end{proof}

\begin{remark}\label{rem3.3} \rm ~

\medskip\n
1) In the case $E = \IZ^{d+1}$, $d \ge 2$, $\alpha + \frac{\beta}{2} = d+1$ in (\ref{3.11}). But one easily sees that in fact $\cA$ cascades with complexity at most $d$. This fact was for instance implicitly used in the proof of Lemma 1.2 of \cite{SidoSzni09b}. 

\bigskip\n
2) In the case $E = \IZ^{d+1}$, $d \ge 2$, Teixeira introduced in \cite{Teix10} a  very interesting family of events having the cascading property, which we briefly describe below. We now use the sup-norm distance $d_\infty(\cdot,\cdot)$ in place of $d(\cdot,\cdot)$. For $x \in \IZ^{d+1}$, and $L \ge 1$ integer, we define the separation event:
\begin{equation}\label{3.19}
\begin{split}
\wt{S}_{x,L} = \big\{ & \sigma \in \{0,1\}^{\IZ^{d+1}}; \; \mbox{there exist two connected sets} 
\\
&\mbox{$A_1, A_2 \subset x + [-L,2L)^{d+1}$, with diameter at least $\mbox{\f $\frac{L}{2}$}$,} 
\\
&\mbox{separated by $\sigma$ in $x + [-2 L, 3L)^{d+1}\big\}$}\,,  
\end{split}
\end{equation}

\medskip\n
where the expression ``$\sigma$ separates $A_1, A_2$ in $x + [-2L, 3L)^{d+1}$'' means that the mutual graph-distance between $A_1$, and $A_2$ exceeds $1$, i.e.~$d_{\IZ^{d+1}}(A_1$, $A_2) > 1$, and that any path in $x + [-2L,3L)^{d+1}$ from $\partial A_1$ to $\partial A_2$ meets the set $\Sigma(\sigma) = \{x \in \IZ^{d+1}$, $\sigma (x) = 1\}$.

\medskip
In other words on the complement of the separation event $\wt{S}_{x,L}$, for any two connected subsets $A_1, A_2$ of $x + [-L,2L)^{d+1}$ with diameter at least $L/2$, and mutual graph-distance bigger than $1$, one can find a path from $\partial A_1$ to $\partial A_2$ in $x + [-2L, 3L)^{d+1}$, where $\sigma$ identically vanishes.

\medskip
It then follows from Theorem 5.2 of \cite{Teix10} that
\begin{equation}\label{3.20}
\wt{\cS} \stackrel{\rm def}{=} (\wt{S}_{x,L})_{x \in \IZ^{d+1}, L \ge 1} \; \mbox{cascades with complexity at most $d+1$}\,.
\end{equation}

\medskip\n
3) In analogy with \cite{Teix10}, we introduce in the general context of the present work the collection $\cS = (S_{x,L})_{x \in E, L \ge 1\,{\rm integer}}$ of separation events defined by
\begin{equation}\label{3.20a}
\begin{split}
S_{x,L} = \big\{ & \sigma \in \{0,1\}^E; \;\mbox{there exist connected subsets $A_1$ and $A_2$ of $\ov{B(x,3L)}$}
\\
&\mbox{with $d(\cdot,\cdot)$-diameter at least $L$, separated by $\Sigma(\sigma)$ in $B(x,5L)\big\}$}\,,
\end{split}
\end{equation}

\medskip\n
where $\Sigma(\sigma) = \{x \in E$; $\sigma(x) = 1\}$, and the above separation statement means that the mutual graph-distance $d_E(A_1,A_2)$ is bigger than $1$, and that any path from $\partial A_1$ to $\partial A_2$ in $B(x,5 L)$ meets $\Sigma(\sigma)$.

\medskip
We show in Proposition \ref{propA.2} of the Appendix that
\begin{equation}\label{3.21b}
\begin{array}{l}
\mbox{$\cS$ is a family of increasing events on $\{0,1\}^E$ that cascades with}
\\
\mbox{complexity at most  $\alpha + \frac{\beta}{2}$}.
\end{array}
\end{equation}

\medskip\n
As an aside, let us mention that making further progress on the question whether $u_*>0$, when $\nu<1$,  might possibly involve the application of Corollary  \ref{cor3.7} below, to a suitable variation on the above family $\cS$, see also Remark \ref{rem5.6} 2).

  \hfill $\square$
\end{remark}

We now come to the main result of this section, which combines the decoupling inequalities of Section 2 with the above notion of cascading property. We recall the sequence of length scales introduced in (\ref{2.1}), as well as the notation (\ref{1.36}), and (\ref{2.70}). We implicitly assume that $\ell_0 \ge 10^6 c_0$, see (\ref{2.1}), is divisible by $100$, and that $L_0 \ge 1$ is an integer.

\begin{theorem}\label{theo3.4} $(K > 0, \, 0 < \nu ' < \nu, \, \ell_0 \ge c(K, \nu '), \,L_0 \ge 1, \, \lambda > 0)$

\medskip
Consider $\cG = (G_{x,L})_{x \in E, L \ge 1 \,{\rm integer}}$, a collection of decreasing, resp.~increasing events on $\{0,1\}^E$, cascading with complexity at most $\lambda$. Then for any $n \ge 0$, $u_0 > 0$, one has for decreasing events
\begin{align}
\sup\limits_{x \in E} \, \IP[G_{x,L_n}^{u_\infty^+}] & \le (c(\cG,\lambda)\ell_0^{2 \lambda})^{2^n-1} \big(\sup\limits_{x \in E} \, \IP[G_{x,L_0}^{u_0}]   + \ve(u_0)\big)^{2^n}\,, \label{3.21}
\\[-1ex]
\intertext{resp.~for increasing events}
\sup\limits_{x \in E} \, \IP[G_{x,L_n}^{u_\infty^-}] & \le (c(\cG,\lambda)\ell_0^{2 \lambda})^{2^n-1} \big(\sup\limits_{x \in E} \, \IP[G_{x,L_0}^{u_0}]   + \ve(u_\infty^-)\big)^{2^n}\,, \label{3.22}
\end{align}
with $\ve(\cdot)$ as in (\ref{2.73}).

\medskip
When the family of events depends on a parameter and cascades uniformly with complexity at most $\lambda$, similar inequalities hold as in (\ref{3.21}), (\ref{3.22}), where the supremum appearing on both sides of the inequalities now runs over $x \in E$ and the parameter set.
\end{theorem}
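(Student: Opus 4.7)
The idea is to iterate the cascading property $n$ times to cover $G_{x,L_n}$ by intersections indexed by embeddings of the dyadic tree $T_n$ in $E$, apply Theorem \ref{theo2.6} to each such intersection, and bound the combinatorial factor produced by the number of admissible embeddings.

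First I apply (\ref{3.2})--(\ref{3.4}) to $G_{x,L_n}=G_{x,\ell_0 L_{n-1}}$ with $\ell=\ell_0$, $L=L_{n-1}$, obtaining a set of branching points in $B(x,9L_n)$ of cardinality at most $c(\cG,\lambda)\,\ell_0^\lambda$. I then apply the cascading property again to each of the two ``child'' events $G_{x',L_{n-1}}, G_{x'',L_{n-1}}$, and so on. After $n$ such iterations one obtains a covering
\[
G_{x,L_n}\;\subseteq\;\bigcup_{\cT\in\cT(x)}\;\bigcap_{m\in T_{(n)}} G_{x_{m,\cT},\,L_0}\,,
\]
where $\cT(x)$ is a collection of maps $\cT:T_n\r E$. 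The verification that each such $\cT$ belongs to $\Lambda_n$ is built into the scheme: condition (\ref{2.4}) with scale $L_{n-k}$ follows directly from the distance condition in (\ref{3.4}) applied at scale $L=L_{n-k-1}$ (since $(\ell_0/100)L_{n-k-1}=L_{n-k}/100$); and the nestedness (\ref{2.3}) follows from $x_{m_i,\cT}\in B(x_{m,\cT},9\ell_0 L_{n-k-1})$ combined with $10L_{n-k-1}+9\ell_0 L_{n-k-1}\le 10\ell_0 L_{n-k-1}=10L_{n-k}$, which holds since $\ell_0\ge 10$.

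Next I count the embeddings. At each of the $2^k$ nodes of level $k$ in $T_n$ one chooses a pair of children within a set of cardinality at most $c(\cG,\lambda)\,\ell_0^\lambda$, giving at most $(c(\cG,\lambda)\ell_0^\lambda)^2$ possibilities per node. Telescoping over $0\le k<n$ yields
\[
|\cT(x)|\;\le\;\prod_{k=0}^{n-1}\bigl(c(\cG,\lambda)\,\ell_0^\lambda\bigr)^{2\cdot 2^k}\;=\;\bigl(c(\cG,\lambda)\,\ell_0^{2\lambda}\bigr)^{2^n-1}\,.
\]
For each $\cT\in\cT(x)$, the collection $A_m=G_{x_{m,\cT},L_0}$, $m\in T_{(n)}$, is $\cT$-adapted in the sense of (\ref{2.5}), since by (\ref{3.1}) the event $G_{x_{m,\cT},L_0}$ is $\sigma(\Psi_{x'},\,x'\in B(x_{m,\cT},10L_0))=\sigma(\Psi_{x'},\,x'\in\wt{C}_{m,\cT})$-measurable (recall $\wt{C}_{m,\cT}=B(x_{m,\cT},10L_0)$ when $m\in T_{(n)}$).

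In the decreasing case, the events $G_{x,L_0}$ are decreasing, so a union bound combined with the decoupling inequality (\ref{2.71}) gives
\[
\IP\bigl[G_{x,L_n}^{u_\infty^+}\bigr]\;\le\;\sum_{\cT\in\cT(x)}\IP\Big[\bigcap_{m\in T_{(n)}}A_m^{u_\infty^+}\Big]\;\le\;|\cT(x)|\,\Big(\sup_{x\in E}\IP[G_{x,L_0}^{u_0}]+\ve(u_0)\Big)^{2^n},
\]
which after taking $\sup_x$ yields (\ref{3.21}). The increasing case is verbatim the same, with (\ref{2.72}) replacing (\ref{2.71}) and $\ve(u_\infty^-)$ replacing $\ve(u_0)$. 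When the family depends on a parameter and the cascading property is uniform, the constant $c(\cG,\lambda)$ in (\ref{3.3}) can be taken independent of the parameter, so the same bound holds after taking the supremum over the parameter as well. The main point is not any single analytic estimate but the combinatorial bookkeeping in the iteration, together with the verification that the tree produced is genuinely an element of $\Lambda_n$; the decisive tension $(c\,\ell_0^{2\lambda})^{2^n-1}$ versus $(\,\cdot\,)^{2^n}$ is already built into the form of the bound and will be exploited in the applications by choosing $L_0$ (and hence the ``seed'' probability) appropriately.
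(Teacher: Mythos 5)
Your proposal is correct and follows essentially the same route as the paper: iterate the cascading property (\ref{3.2})--(\ref{3.4}) to cover $G_{x,L_n}$ by at most $(c(\cG,\lambda)\ell_0^{2\lambda})^{2^n-1}$ intersections $\bigcap_{m\in T_{(n)}}G_{x_{m,\cT},L_0}$ indexed by embeddings in $\Lambda_n$, then conclude by a union bound and the decoupling inequalities (\ref{2.71}), (\ref{2.72}). Your explicit verification of (\ref{2.3}), (\ref{2.4}) and of $\cT$-adaptedness only spells out what the paper leaves implicit in its induction, so nothing is missing.
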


\begin{proof}
It follows from (\ref{3.4}), and induction on $n$ that
\begin{equation}\label{3.23}
\begin{array}{l}
G^u_{x,L_n} \subseteq \bigcup\limits_{\cT \in \Lambda^\cG_n} \; \bigcap\limits_{m \in T_{(n)}} \,G^u_{x_{m,\cT}, L_0}\,,
\end{array}
\end{equation}

\n
where the subset $\Lambda_n^\cG$ of the collection $\Lambda_n$ of embeddings of $T_n$, see above (\ref{2.2}), has cardinality at most
\begin{equation}\label{3.24}
|\Lambda^\cG_n| \le (c(\cG, \lambda)  \ell_0^{\lambda})^2 (c(\cG, \lambda) \ell_0^{\lambda})^4 \dots (c(\cG, \lambda) \ell_0^{\lambda})^{2^n} = (c'(\cG, \lambda)  \ell_0^{2\lambda})^{2^n-1}\,.
\end{equation}

\n
In view of the decoupling inequalities (\ref{2.71}), (\ref{2.72}), our claims (\ref{3.21}), (\ref{3.22}) follow. The extension to families depending on a parameter and having the uniform cascading property is immediate. 
\end{proof}

\medskip
We now derive two corollaries, which we will later apply in Sections 4 and 5.

\begin{corollary}\label{cor3.5}
Consider a family $\cG = (G_{x,L})_{x \in E, L \ge 1 \,{\rm integer}}$ of events on $\{0,1\}^E$ cascading with complexity at most $\lambda > 0$, and $u > 0$ such that
\begin{equation}\label{3.25}
\lim\limits_{\ov{L \r \infty}} \;\sup\limits_{x \in E} \,\IP[G_{x,L}^u] = 0\,.
\end{equation}

\n
If the events in $\cG$ are decreasing, resp.~increasing, then for $\ov{u} > u$, resp.~$\ov{u} < u$, one can find integers $L_0 \ge 1$, $\ell_0 > 1$, such that with $L_n = \ell_0^n \,L_0$,
\begin{equation}\label{3.26}
\sup\limits_{x \in E} \,\IP[G^{\ov{u}}_{x,L_n}] \le 2^{-2^n}, \;\mbox{for all $n \ge 0$}\,.
\end{equation}

\n
When the family depends on a parameter and cascades uniformly with complexity at most $\lambda$, if (\ref{3.25}) holds with a joint supremum over $x$ in $E$ and the parameter set, then (\ref{3.26}) holds with a similar modification.
\end{corollary}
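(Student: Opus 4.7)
The plan is to apply Theorem~\ref{theo3.4} with $u_0 = u$, after tuning the auxiliary parameters $K,\nu',\ell_0,L_0$ so that the base of the doubly-exponential bound in (\ref{3.21}), (\ref{3.22}) is small enough to absorb the polynomial combinatorial factor.

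First I would fix any $0 < \nu' < \nu$ and any $K > 0$, and read off from the explicit product formula (\ref{2.70}) that $u_\infty^+ \downarrow u$ and $u_\infty^- \uparrow u$ as $\ell_0 \to \infty$, since each factor in the product is of the form $1 + O(\ell_0^{-(\nu-\nu')/2})$ with a summable prefactor $(k+1)^{-3/2}$. In the decreasing case with $\bar u > u$, I would choose $\ell_0$ (a multiple of $100$ with $\ell_0 \ge c(K,\nu')$) large enough that $u_\infty^+ \le \bar u$, and in the increasing case with $\bar u < u$ large enough that $u_\infty^- \ge \bar u$. This monotonicity step lets us replace $\bar u$ by $u_\infty^{\pm}$ in the target (\ref{3.26}): for decreasing events $\IP[G_{x,L_n}^{\bar u}] \le \IP[G_{x,L_n}^{u_\infty^+}]$, and symmetrically in the increasing case.

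With $\ell_0$ now fixed, write $C = c(\cG,\lambda)\,\ell_0^{2\lambda}$ for the constant appearing in Theorem~\ref{theo3.4}. The next step is to choose $L_0$ so large that
\[
\sup_{x \in E} \IP[G_{x,L_0}^u] \;+\; \ve(v) \;\le\; \frac{1}{2C},
\]
where $v = u$ in the decreasing case and $v = u_\infty^-$ in the increasing case. The first summand can be made arbitrarily small for suitable $L_0$ by the hypothesis (\ref{3.25}) (the $\liminf$ being $0$ yields arbitrarily large admissible $L_0$), and the second summand decays as $\exp\{-cL_0^\nu\}$ with $c = Kv \ell_0^{\nu'} > 0$ by the explicit form (\ref{2.73}); hence both can be forced below $1/(4C)$ by taking $L_0$ sufficiently large. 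Substituting into (\ref{3.21}), resp.~(\ref{3.22}), gives
\[
\sup_{x \in E} \IP[G_{x,L_n}^{u_\infty^{\pm}}] \;\le\; C^{2^n-1} \bigl(\tfrac{1}{2C}\bigr)^{2^n} \;=\; C^{-1}\, 2^{-2^n} \;\le\; 2^{-2^n},
\]
which together with the monotonicity observation of the first paragraph yields (\ref{3.26}). The parameter-dependent, uniformly cascading case is handled in exactly the same way, with every supremum taken jointly over $x \in E$ and the parameter, since the constant $c(\cG,\lambda)$ in (\ref{3.3}) is by hypothesis uniform in the parameter.

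There is no substantive obstacle: this is a bookkeeping argument that combines the hypothesis (\ref{3.25}) with the exponential decay of $\ve(\cdot)$ in $L_0$ and the convergence of the renormalisation product (\ref{2.70}) to $1$. The only point requiring mild care is the order in which the parameters are fixed: $K,\nu'$ are chosen first (they are essentially free), then $\ell_0$ is fixed large in terms of $\bar u - u$ (or $u - \bar u$), and only after that is $L_0$ selected large in terms of $\ell_0$, so that the two smallness requirements on $L_0$ can be met simultaneously.
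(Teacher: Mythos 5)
Your proposal is correct and follows essentially the paper's own proof: apply Theorem \ref{theo3.4} with $u_0=u$, take $\ell_0$ large (multiple of $100$, $\ge c(K,\nu')$) so that $u_\infty^{+}<\ov{u}$, resp.\ $u_\infty^{-}>\ov{u}$, use monotonicity of the events to replace $\ov{u}$ by $u_\infty^{\pm}$, and then force the base of the doubly exponential bound below $\frac{1}{2}$. The only, immaterial, difference is that the paper absorbs the $\ve$-term by taking $\ell_0$ large uniformly in $L_0\ge 1$ (with the concrete choices $K=2$, $\nu'=\nu/2$, and bounding $\ve(u_\infty^-)\le\ve(u/2)$), whereas you absorb it together with $\sup_{x\in E}\IP[G^{u}_{x,L_0}]$ by taking $L_0$ large after fixing $\ell_0$; both exploit the exponential decay of $\ve$ in $L_0^\nu\ell_0^{\nu'}$ and yield the same conclusion.
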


\begin{proof}
We pick $K = 2$ and $\nu ' = \frac{\nu}{2}$ in Theorem \ref{theo3.4}, and from now on assume that $\ell_0 > c = c(K = 2, \nu ' = \frac{\nu}{2}) \ge 10^6 c_0$ is a multiple of 100 such that Theorem \ref{theo3.4} applies. Setting $u_0 = u$,  we further assume that $\ell_0 \ge c(\ov{u}, \cG, \lambda)$ is large enough so that in the case of decreasing events, (cf.~(\ref{2.70}), (\ref{3.21}) for the notation):
\begin{equation}\label{3.27}
\begin{array}{rl}
{\rm i)} &u^+_\infty < \ov{u}, \;\mbox{and}
\\[1ex]
{\rm ii)} & c(\cG, \lambda) \,\ell_0^{2 \lambda} \ve(u) \stackrel{(\ref{2.73})}{\le} c(\cG, \lambda) \ell_0^{2 \lambda} \,2e^{-2 u \ell_0^{\nu '}} / (1 - e^{-2 u \ell_0^{\nu '}}) \le \frvier, \;\mbox{for all $L_0 \ge 1$}\,.
\end{array}
\end{equation}
and that in the case of increasing events:
\begin{equation}\label{3.28}
\hspace{-10ex}\begin{array}{rl}
{\rm i)} &u^-_\infty > \max\Big( \ov{u}, \mbox{\f $\dis\frac{u}{2}$}\Big)  
\\[1ex]
{\rm ii)} & c(\cG, \lambda) \,\ell_0^{2 \lambda} \ve(u^-_\infty)  \le c(\cG, \lambda) \ell_0^{2 \lambda}  \ve \Big(\mbox{\f $\dis\frac{u}{2}$}\Big) \le \frvier, \;\mbox{for all $L_0 \ge 1$}\,.
\end{array}
\end{equation}

\n
So in the case of decreasing events we see that for all $L_0 \ge 1$, $n \ge 0$, $x \in E$, 
\begin{equation}\label{3.29}
\IP [G^{\ov{u}}_{x,L_n}] \le \IP[G_{x,L_n}^{u^+_\infty}] \underset{(\ref{3.27}) {\rm ii)}}{\stackrel{(\ref{3.21})}{\le}} \Big[c(\cG,\lambda)\ell_0^{2\lambda} \;\sup\limits_{x \in E} \,\IP[G_{x,L_0}^u] + \frvier \Big]^{2^n}\,.
\end{equation}

\medskip
In view of (\ref{3.25}) we can pick a large enough $L_0$ so that (\ref{3.26}) holds. When the family consists of increasing events, the same argument with (\ref{3.22}) and (\ref{3.28}) in place of (\ref{3.21}), (\ref{3.22}) yields the claim. The case of a family depending on a parameter is handled in a similar fashion as above.
\end{proof}

\begin{remark}\label{rem3.6} \rm  ~

\medskip\n
1) Note that we can pick $\ell_0$ and $L_0$ in Corollary \ref{cor3.5} as $c(\ov{u},\cG)$, where the convention concerning constants can be found at the end of the Introduction. For instance we can select $\ell_0 > 1$ minimal such that (\ref{3.26}) holds for some $L_0 \ge 1$ and then given this choice of $\ell_0$ pick a minimal $L_0$ such that (\ref{3.26}) holds. 

\medskip\n
2) The above corollary could in fact accommodate the choice of a faster growth of complexity than what is imposed on cascading families in (\ref{3.3}). One could as well have chosen $c(\cG, \wt{\nu}) \, e^{\ell^{\wt{\nu}}}$, with $\wt{\nu}<\nu$, in place of $c(\cG, \lambda) \, \ell^\lambda$, and the above proof would have gone through with minor modifications. Apart from the various examples presented  in Proposition \ref{prop3.2} and Remark \ref{rem3.3}, one motivation for the choice that appears in (\ref{3.3}) stems from the next proposition, where the parameter $\lambda$ from (\ref{rem3.3}) plays an explicit role.
\hfill $\square$

\medskip
We are now ready to state the second corollary of Theorem \ref{theo3.4}, which will be used in Section 5. The following observation will be implicit in the interpretation of the expression in (\ref{3.30}) below: given $X_\point^1, \dots, X^M_\point$ independent canonical random walks on $E$, the indicator function $\chi_{\cup^M_1 {\rm range}(X_\point^i)}$ of the union of the ranges of the walks defines a $\{0,1\}^E$-valued random variable, (when $M=0$ this random variable is constant and equal to the function identically equal to $0$ on $E$).
\end{remark}

\begin{corollary}\label{cor3.7}
Consider a family $\cG = (G_{x,L})_{x \in E, L \ge 1 \,{\rm integer}}$ of increasing events on $\{0,1\}^E$ cascading with complexity at most $\lambda > 0$, and an integer $M \ge [\frac{2 \lambda}{\nu}]$, (recall $\nu = \alpha - \frac{\beta}{2} > 0$). Assume that
\begin{equation}\label{3.30}
\underset{L \r \infty}{\underline{\lim}} \;\sup\limits_{x,x_1,\dots,x_M} \;\bigotimes\limits^M_{i=1} \,P_{x_i} [\chi_{\cup^M_1 {\rm range}(X^i_\point)} \in G_{x,L}] = 0\,,
\end{equation}

\n
where the supremum runs over $x \in E$, $x_1,\dots,x_M \in \partial_{\rm int} B(x,20 L)$. Then one can find $u > 0$, and integers $L_0 \ge 1$, $\ell_0 > 1$, so that
\begin{equation}\label{3.31}
\sup\limits_{x \in E} \,\IP[G^u_{x,L_n}] \le 2^{-2n}, \;\mbox{for all $n \ge 0$}\,.
\end{equation}

\n
In addition when the family depends on a parameter and cascades uniformly with complexity at most $\lambda$, if (\ref{3.30}) holds with a joint supremum over $x,x_1,\dots, x_M$ as above and the parameter set, then the supremum in (\ref{3.31}) can be replaced by a joint supremum over $x$ and the parameter set.
\end{corollary}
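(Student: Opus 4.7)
I would apply Theorem~\ref{theo3.4} after establishing a one-scale bound on $\sup_x \IP[G^{u_0}_{x,L_0}]$ using the hypothesis (\ref{3.30}). To set parameters for Theorem~\ref{theo3.4}, fix $K=2$ and pick $\nu'\in (2\lambda/(M+1),\nu)$, which is possible since the hypothesis $M \ge [2\lambda/\nu]$ gives $2\lambda/(M+1) < \nu$. Take $\ell_0$ to be a multiple of $100$ with $\ell_0 \ge c(K,\nu')$ so that Theorem~\ref{theo3.4} applies; its precise size will be fixed below.

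\textbf{Comparison with $M$ independent walks.} Fix $L_0 \ge 1$, $x \in E$, and write $K_x = B(x,20L_0)$. By the identity (\ref{1.32}), $\cI^{u_0} \cap B(x,10L_0)$ coincides with $\bigcup_{w \in {\rm Supp}\,\mu_{K_x,u_0}} w(\IN) \cap B(x,10L_0)$. The total mass $N$ of $\mu_{K_x,u_0}$ is Poisson with parameter $u_0\,{\rm cap}(K_x)$, which by (\ref{1.26}) is at most $c\, u_0 L_0^\nu$. Conditionally on $\{N=n\}$, the trajectories are i.i.d.\ with law $P_{\bar e_{K_x}}$, where $\bar e_{K_x} = e_{K_x}/{\rm cap}(K_x)$ is concentrated on $\partial_{\rm int} K_x = \partial_{\rm int} B(x,20L_0)$. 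Since $G_{x,L_0}$ is increasing and $\sigma(\Psi_y,\,y \in B(x,10L_0))$-measurable, on $\{N \le M\}$ the event $\chi_{\cI^{u_0}} \in G_{x,L_0}$ forces $\chi_{\bigcup_{i=1}^M {\rm range}(X^i_\point)} \in G_{x,L_0}$ once we pad the family of interlacement trajectories with $M-N$ further independent walks started under $\bar e_{K_x}$. Integrating out the starting points of the $M$ i.i.d.\ walks and taking a supremum yields
\begin{equation*}
\sup_{x\in E} \IP\bigl[G^{u_0}_{x,L_0}\bigr] \le \IP[N > M] + \delta(L_0),
\end{equation*}
where $\delta(L_0)$ denotes the left-hand side of (\ref{3.30}) at scale $L_0$. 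The factorial moments of a Poisson variable give $\IP[N > M] \le (c\, u_0 L_0^\nu)^{M+1}/(M+1)!$.

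\textbf{Parameter choice and main obstacle.} I would then set $u_0 = c_0/L_0^\nu$ with $c_0 > 0$ to be chosen. This scaling makes $\IP[N > M] \le c(M) c_0^{M+1}$ and $u_0 L_0^\nu \ell_0^{\nu'} = c_0 \ell_0^{\nu'}$ both independent of $L_0$; moreover, for $\ell_0$ large enough, $u_\infty^- \ge u_0/2$, so the remainder term in Theorem~\ref{theo3.4} satisfies $\varepsilon(u_\infty^-) \le 4\, e^{-K c_0 \ell_0^{\nu'}/2}$. Substituting into (\ref{3.22}) gives
\begin{equation*}
\sup_x \IP\bigl[G^{u_\infty^-}_{x,L_n}\bigr] \le \bigl(c(\cG,\lambda)\ell_0^{2\lambda}\bigr)^{2^n-1} \bigl(c(M) c_0^{M+1} + \delta(L_0) + 4\, e^{-K c_0 \ell_0^{\nu'}/2}\bigr)^{2^n}.
\end{equation*}
To reach the target decay it suffices to make each of the three terms in parentheses smaller than $(6\, c(\cG,\lambda)\ell_0^{2\lambda})^{-1}$. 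The upper constraint coming from the Poisson term reads $c_0 \le c'\,\ell_0^{-2\lambda/(M+1)}$, while the lower constraint coming from the exponential term reads $c_0 \ge c''\,\ell_0^{-\nu'}\log\ell_0$. The main obstacle, and the whole purpose of the assumption $M \ge [2\lambda/\nu]$, is to make these two constraints simultaneously satisfiable for $\ell_0$ large, which happens precisely when $2\lambda/(M+1) < \nu'$; this is why $\nu'$ was selected in that range. Once $\ell_0$ and $c_0$ are fixed, the hypothesis (\ref{3.30}) lets us enlarge $L_0$ so that $c(\cG,\lambda)\ell_0^{2\lambda}\,\delta(L_0) \le 1/6$, and the conclusion follows with $u = u_\infty^- > 0$. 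The parameter-dependent case goes through identically, propagating the joint supremum from (\ref{3.30}) through the comparison step using the uniform version of the cascading property.
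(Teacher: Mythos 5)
Your proposal is correct and follows essentially the same route as the paper: the same reduction of $\cI^{u_0}\cap B(x,10L_0)$ to a Poisson number of independent excursions from $\partial_{\rm int}B(x,20L_0)$ via (\ref{1.32}), the same padding/monotonicity comparison with $M$ walks, the same scaling $u_0\asymp L_0^{-\nu}$ times a small negative power of $\ell_0$, and the same balancing of the Poisson-overflow term against $\ve(u_\infty^-)$ inside Theorem \ref{theo3.4}. The only difference is bookkeeping: the paper fixes $u_0=L_0^{-\nu}\ell_0^{-\nu''}$ with $2\lambda<\nu''(M+1)$ and $\nu''<\nu'$, which is exactly one admissible choice of your constant $c_0$ in the window you describe.
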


\begin{proof}
We choose $K=2$ and $0 < \nu^{\prime\prime} < \nu^\prime < \nu = \alpha - \frac{\beta}{2}$, so that
\begin{equation}\label{3.32}
\nu^{\prime\prime} (M +1) > 2 \lambda \,.
\end{equation}

\n
We know from Theorem \ref{theo3.4} that for $\ell_0 \ge c(K=2, \nu^\prime)$ multiple of $100$, $u_0 > 0$, $L_0 \ge 1$, one has
\begin{equation}\label{3.33}
\sup\limits_{x \in E} \,\IP[G_{x,L_n}^{u^-_\infty}] \le [c(\cG,\lambda) \ell_0^{2 \lambda} \big(\sup\limits_{x \in E} \,\IP [G_{x,L_0}^{u_0}] + \ve (u_\infty^-))]^{2^n}\,.
\end{equation}

\n
From now on we assume $\ell_0$ large enough so that, cf.~(\ref{2.70}), for all $u_0 > 0$,
\begin{align} 
u^-_\infty &\ge \fr \;u_0 \,,\label{3.34}
\intertext{and such that setting}
u_0  &= L_0^{-\nu} \;\ell_0^{-\nu ''}, \label{3.35}
\end{align}
one also has, cf.~(\ref{2.73}):
\begin{equation}\label{3.36}
c(\cG,\lambda) \, \ell_0^{2 \lambda} \ve(u^-_\infty) \le  c(\cG, \lambda) \,\ell_0^{2 \lambda} \,2e^{-\ell_0^{\nu^\prime - \nu^{\prime\prime}}} / (1-e^{-\ell_0^{\nu^\prime - \nu^{\prime\prime}}}) \le \frvier, \;\; \mbox{for all $L_0 \ge 1$}\,.
\end{equation}

\n
The claim (\ref{3.31}) will then follow in view of (\ref{3.33}), (\ref{3.36}), once we show that
\begin{equation}\label{3.37}
\lim\limits_{\ell_0 \r \infty} \; \underset{L_0 \r \infty}{\underline{\lim}} \;\ell_0^{2 \lambda} \;\sup\limits_{x \in E} \;\IP[G^{u_0}_{x,L_0}] = 0\,,
\end{equation}

\n
with $u_0$ as in (\ref{3.35}) and $\ell_0$ multiple of $100$.

\medskip
For this purpose we observe that $G_{x,L_0}$ is $\sigma(\Psi_{x'}$,~$x'\! \in \! B(x,10L_0))$-measurable and $\cI^{u_0} \cap B(x,10 L_0)$ coincides by (\ref{1.32}) with the trace on $B(x,10L_0)$ of the union of the ranges of the trajectories in the support of the Poisson point measure $\mu  \stackrel{\rm def}{=} \mu_{B(x,20 L_0),u_0}$. The intensity of $\mu$ equals $u_0 \,P_{e_{B(x,20L_0)}}$ by (\ref{1.27}), and the law of $\mu$ can thus be generated as the sum of a Poisson number of point masses on $W^+$ located at independent random walk trajectories with common distribution $P_{\ov{e}}$, where the common starting distribution equals $\ov{e} = e_{B(x,20L_0)} / {\rm cap} (B(x,20L_0))$ and the Poisson variable has intensity
\begin{equation}\label{3.38}
u_0 \,{\rm cap}(B(x,20 L_0)) \stackrel{(\ref{1.26})}{\le} c \,u_0 \,L_0^\nu \stackrel{(\ref{3.35})}{\le} c\,\ell_0^{-\nu^{\prime\prime}} \stackrel{\rm def}{=} \kappa \,.
\end{equation}

\medskip\n
The probability that the Poisson variable exceeds $M$ is at most:
\begin{equation*}
e^{-\kappa} \dsl_{k \ge M + 1} \;\dis\frac{\kappa^k}{k!} \le \kappa^{M+1} = c^{(M+1)} \ell_0^{-\nu^{\prime\prime}(M+1)} .
\end{equation*}
As a result we see that
\begin{equation}\label{3.39}
\begin{array}{l}
\ell_0^{2 \lambda} \; \sup\limits_{x \in E} \;\IP[G_{x,L_0}^{u_0}] \le c^{(M+1)} \,\ell_0^{2 \lambda - (M+1) \nu^{\prime\prime}}  \; + 
\\[2ex]
\ell^{2 \lambda}_0 \;\sup\limits_{x,x_1,\dots,x_M} \;\bigotimes\limits^M_{i=1} \, P_{x_i} [\chi_{\cup^M_1 {\rm range}(X^i_\point)} \in G^{u_0}_{x,L_0}]\,,
\end{array}
\end{equation}

\medskip\n
where the supremum runs over the same collection as in (\ref{3.30}). By (\ref{3.30}) and (\ref{3.32}) we see that (\ref{3.37}) holds. This proves (\ref{3.31}). The case of a family $\cG$ depending on a parameter is handled in a similar fashion. This concludes the proof of Corollary \ref{cor3.7}.
\end{proof}

\begin{remark}\label{rem3.8} \rm ~

\medskip\n
1) In the case $E = \IZ^{d+1}$, $d \ge 2$, the family $\wt{\cS}$ of separation events, cf.~(\ref{3.19}), (\ref{3.20}) introduced by Teixeira \cite{Teix10} cascades with complexity at most $d+1$, and $\nu = d-1$, so that $[\frac{2 \lambda}{\nu}] = 2 + [\frac{4}{d-1}]$. When $d \ge 4$, we can choose $M=3$, and it follows from Theorem 6.11 of \cite{Teix10} that (\ref{3.30}) holds. As a result we see by (\ref{3.31}) that one can find $u > 0$, $\ell_0 > 1$, and $L_0 \ge 1$, such that
\begin{equation}\label{3.40}
\IP[\wt{S}^u_{0,L_n}] \le 2^{-2^n}, \; \mbox{for all $n \ge 0$}\,,
\end{equation}

\medskip\n
(the supremum over $x$ has been dropped due to translation invariance).

\bigskip\n
2) In view of the above remark, the family $\cS$ in (\ref{3.20a}) is also a natural candidate for which condition (\ref{3.30}) of Corollary \ref{cor3.7} ought to be tested. We hope this point will be tackled elsewhere.

\bigskip\n
3) By similar considerations as in Remark \ref{rem3.6}, we can pick $u$, $\ell_0$, $L_0$ as $c(\cG)$ in the above Corollary \ref{cor3.7}. \hfill $\square$
\end{remark}

\section{Finiteness of $\pmb{u_*}$ and connectivity bounds}
\setcounter{equation}{0}

In this section we explore the percolative properties of the vacant set of random interlacements on $E$, in the sub-critical phase of the model. We show that in our set-up the critical parameter $u_*$ in (\ref{0.7}) is always finite, and we derive stretched exponential bounds on the connectivity function $\IP[x \stackrel{\cV^u}{\longleftrightarrow} y]$, when $u > u_{**}$, with $u_{**} \ge u_*$,  a certain finite critical value, possibly equal to $u_*$, see (\ref{0.10}). The main result appears in Theorem \ref{theo4.1}, and comes as an application of Theorem \ref{theo3.4} and Corollary \ref{cor3.5}. Even in the case $E = \IZ^{d+1}$, $d \ge 2$, Theorem \ref{theo4.1} improves on what is presently known, see Remark \ref{rem4.2} 1) below.

\medskip
We recall the definition of $u_{**}$ from (\ref{0.10}),
\begin{equation}\label{4.1}
u_{**} = \inf\{ u \ge 0; \;\underset{L \r \infty}{\underline{\lim}} \;\sup\limits_{x \in E} \,\IP[B(x,L) \stackrel{\cV^u}{\longleftrightarrow} \partial_{\rm int} B(x,2L)] = 0\} \in [0,\infty]\,.
\end{equation}

\n
With the notation from (\ref{0.3}), it is plain that $\eta(x,u) = 0$ for all $x$ in $E$, when $u > u_{**}$, and therefore we see that
\begin{equation}\label{4.2}
0 \le u_* \le u_{**} \le \infty \,.
\end{equation}

\n
A routine covering argument further shows that $u_{**}$ does not change when we restrict $L$ to integer values in (\ref{4.1}), see also below (\ref{4.5}). The following theorem will in particular show that $u_{**}$ is finite.

\begin{theorem}\label{theo4.1}
\begin{equation}\label{4.3}
0 \le u_* \le u_{**} < \infty \,,
\end{equation}

\medskip\n
and for $u > u_{**}$ there exist $c_4(u)$, $c_5(u) > 0$, $0 < \kappa(u) < 1$, such that
\begin{equation}\label{4.4}
\sup\limits_{x \in E} \;\IP[B(x,L) \stackrel{\cV^u}{\longleftrightarrow} \partial_{\rm int} B(x,2L)] \le c_4 \,e^{-c_5 L^\kappa}, \;\mbox{for $L \ge 1$}\,.
\end{equation}
\end{theorem}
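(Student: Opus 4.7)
The plan is to apply the machinery of Section 3 to the family $\cA=(A_{x,L})_{x\in E,L\ge 1}$ of (\ref{3.6}), which by Proposition \ref{prop3.2} is a family of decreasing events cascading with complexity at most $\lambda:=\alpha+\frac{\beta}{2}$. By (\ref{3.7}) the event $A^u_{x,L}$ is exactly the crossing event appearing both in the definition (\ref{4.1}) of $u_{**}$ and in the target bound (\ref{4.4}), so both assertions reduce to controlling $\sup_{x\in E}\IP[A^u_{x,L}]$.

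To prove $u_{**}<\infty$, I would start from the trivial inclusion $A^u_{x,L_0}\subseteq\{\partial_{\rm int}B(x,2L_0)\cap\cV^u\ne\emptyset\}$ and apply a union bound: combining (\ref{1.37}) and (\ref{1.9}) with the volume estimate (\ref{1.11}) yields
\[
\sup_{x\in E}\IP[A^u_{x,L_0}]\;\le\;c\,L_0^{\alpha+\frac{\beta}{2}}\,\exp(-u/\ov c),
\]
uniformly in $L_0\ge 1$. Fixing $K=2$, $\nu'=\nu/2$, taking $\ell_0$ a multiple of $100$ with $\ell_0\ge c(K,\nu')$ (so Theorem \ref{theo3.4} applies) and $L_0\ge 1$ arbitrary, I would then pick $u_0$ large enough that both $\sup_x\IP[A^{u_0}_{x,L_0}]$ and the remainder $\ve(u_0)$ of (\ref{2.73}) are at most $\frac14(c(\cA,\lambda)\ell_0^{2\lambda})^{-1}$. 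The decoupling bound (\ref{3.21}) then gives $\sup_x\IP[A^{u_\infty^+}_{x,L_n}]\le 2^{-2^n}$ for all $n\ge 0$, so the $\liminf$ in (\ref{4.1}) vanishes at level $u_\infty^+$, proving $u_*\le u_{**}\le u_\infty^+<\infty$.

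For the stretched-exponential decay (\ref{4.4}) at $u>u_{**}$, the infimum definition of $u_{**}$ supplies some $u_0\in(u_{**},u)$ with $\liminf_{L\to\infty}\sup_x\IP[A^{u_0}_{x,L}]=0$. Since $\cA$ is decreasing and $\ov u:=u>u_0$, Corollary \ref{cor3.5} produces integers $\ell_0>1$ and $L_0\ge 1$ (both depending on $u$) such that
\[
\sup_{x\in E}\IP[A^u_{x,L_n}]\;\le\;2^{-2^n}\quad\text{for all }n\ge 0,\quad L_n=\ell_0^nL_0.
\]
It then remains to transfer this dyadic-scale estimate to arbitrary $L\ge 1$.

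For the interpolation I would establish the deterministic inclusion $A^u_{x,L}\subseteq\bigcup_{y\in B(x,L)}A^u_{y,L_n}$ whenever $L\ge 2L_n+2$: if $\gamma=(y_0,\ldots,y_N)$ is a vacant nearest-neighbour path witnessing $A^u_{x,L}$, then $d(x,y_N)>2L-1$ (since $y_N\in\partial_{\rm int}B(x,2L)$ is adjacent to a point outside $B(x,2L)$ and $d=1$ along a single edge) while $d(x,y_0)\le L$, so the triangle inequality forces $d(y_0,y_N)>L-1\ge 2L_n$; thus $\gamma$ must cross $\partial_{\rm int}B(y_0,2L_n)$, exhibiting $A^u_{y_0,L_n}$. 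Choosing $n$ maximal with $2L_n+2\le L$ and using (\ref{1.11}) to bound $|B(x,L)|\le c\,L^{\alpha+\beta/2}$, a union bound yields
\[
\sup_{x\in E}\IP[A^u_{x,L}]\;\le\;c\,L^{\alpha+\frac{\beta}{2}}\,2^{-2^n}\;\le\;c\,L^{\alpha+\frac{\beta}{2}}\exp(-c'L^\kappa),
\]
with $\kappa:=\log 2/\log\ell_0\in(0,1)$ (using $\ell_0>2$); absorbing the polynomial prefactor into the exponent by a mild reduction of $c'$ gives (\ref{4.4}). I do not anticipate a substantive obstacle: the heavy lifting has been carried out in Sections 2--3, and both parts are direct applications of the decoupling/cascading framework; the one mild fiddle is verifying the geometric inclusion in the interpolation step.
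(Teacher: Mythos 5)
Your proposal is correct and takes essentially the same route as the paper: finiteness of $u_{**}$ via the decoupling bound (\ref{3.21}) for the cascading family $\cA$ together with a crude single-site estimate at the base scale, then Corollary \ref{cor3.5} for $u>u_{**}$ to obtain the $2^{-2^n}$ decay along the scales $L_n$, and finally an interpolation to arbitrary $L$ giving $\kappa=\log 2/\log\ell_0$. The only (inessential) difference is in the interpolation step, where you take a union bound over all centers in $B(x,L)$ and absorb the polynomial prefactor, while the paper covers $B(x,L)$ by a bounded number of balls of radius $L_n\le L/4$.
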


\begin{proof}
Once we know that $u_{**} < \infty$, it follows from Corollary \ref{cor3.5} applied to the collection $\cA$ of (\ref{3.6}), see also Remark \ref{rem3.6}, that for $u > u_{**}$, there is $\ell_0(u) > 1$ and $L_0(u) \ge 1$, such that:
\begin{equation}\label{4.5}
\sup\limits_{x \in E} \;\IP[A^u_{x,L_n}] \le 2^{-2^n}, \;\; \mbox{for all $n \ge 0$}\,.
\end{equation}

\n
If we now consider $L \ge 4 L_0$, and $L_n \le L/4 < L_{n+1}$, a similar argument as in the proof of Proposition \ref{prop3.2} enables us to cover $B(x,L)$ by a collection of a most $c(u)$ closed balls of radius $L_n$ with centers in $B(x,L)$. It then follows that
\begin{equation}\label{4.6}
\begin{array}{l}
\IP[B(x,L) \stackrel{\cV^u}{\longleftrightarrow} \partial_{\rm int} B(x,2L)] \le c(u) \;\sup\limits_{x \in E} \;\IP[A^u_{x,L_n}] \le c(u)\,2^{-2^n} =
\\[1ex]
c(u) \,\exp\Big\{- \Big(\mbox{\f $\dis\frac{L_{n+1}}{L_1}$}  \Big)^\kappa\Big\} \le c(u) \,\exp\{ - L^{-\kappa}_1 \,L^\kappa\}, \;\; 
\mbox{with $\kappa(u) = \dis\frac{\log 2}{\log \ell_0}$} \;,
\end{array}
\end{equation}

\medskip\n
and (\ref{4.4}) follows straightforwardly.

\medskip
We hence only need to show that $u_{**}$ is finite. For this purpose we use (\ref{3.21}) with $\cG = \cA$, $\lambda = \alpha + \frac{\beta}{2}$, $K = 1$, $\nu^\prime = \frac{\nu}{2}$, and find that for some $\ell_ 0 > 1$, $L_0 =1$, and all $n \ge 0$, $u_0 > 0$, one has
\begin{equation}\label{4.7}
\begin{array}{l}
\sup\limits_{x \in E} \;\IP[B(x,L_n)  \stackrel{\cV^{u^+_\infty}}{\longleftrightarrow} \partial_{\rm int} B(x,2L_n)]  \le 
\\[2ex]
\big[c\,\ell_0^{2 \lambda} \big(\sup\limits_{x \in E} \;\IP[B(x,1)  \stackrel{\cV^{u_0}}{\longleftrightarrow}  \partial_{\rm int} B(x,2)]  + \ve(u_0)\big)\big]^{2^n}\,.
\end{array}
\end{equation}

\medskip\n
Taking into account that the event under the probability in the right-hand side of (\ref{4.7}) is contained in the event $\{\cV^{u_0} \cap B(x,2) \not= \emptyset\}$, we see that
\begin{equation}\label{4.8}
\sup\limits_{x \in E} \;\IP[B(x,1)  \stackrel{\cV^{u_0}}{\longleftrightarrow}  \partial_{\rm int} B(x,2)]  \stackrel{(\ref{1.37})}{\le} c \,e^{-cu_0}\,.
\end{equation}

\n
Thus for large enough $u_0$, the left-hand side of (\ref{4.7}) is at most $2^{-2^n}$ for all $n \ge 0$. This proves that $u_{**} < \infty$, and concludes the proof of Theorem \ref{theo4.1}.
\end{proof}

\begin{remark}\label{rem4.2} \rm ~

\medskip\n
1)  In the case $E = \IZ^{d+1}$, $d \ge 2$, when we use the sup-norm distance in place of $d(\cdot,\cdot)$, the above result improves upon what is known from \cite{SidoSzni09b}. Indeed it shows that
\begin{equation}\label{4.9}
\begin{split}
u_{**} & = \inf \big\{ u \ge 0; \;  \underset{L \r \infty}{\underline{\lim}}
 \;\IP[B_\infty(0,L) \stackrel{\cV^u}{\longleftrightarrow} \partial_{\rm int} B_\infty(0,2L)]  = 0\big\}
\\ 
& = \inf \big\{ u \ge 0; \;\mbox{for some $\alpha > 0$},   \lim\limits_{L \r \infty} \; L^\alpha \, \IP[B_\infty(0,2L) \stackrel{\cV^u}{\longleftrightarrow} \partial_{\rm int} B_\infty(0,2L)]  = 0\big\}\,,
\end{split}
\end{equation}

\n
the second line being a priori bigger or equal to $u_{**}$, but in fact equal to $u_{**}$ thanks to (\ref{4.4}). The quantity in the second line of (\ref{4.9}) is used for the definition of $u_{**}$ in \cite{SidoSzni09b}, and the equality stated in (\ref{4.9}) is new.

\bigskip\n
2) Can one construct examples where $u_{**} > u_*$ holds? Can this bring some light concerning the open question whether $u_* = u_{**}$ in the case of $E = \IZ^{d+1}$, $d \ge 2$~? The absence of a true substitute for the BK Inequality in the context of interlacement percolation makes this last question hard to answer at present.

\bigskip\n
3) Note that for any two distinct points $x,x'$ in $E$, with $d(x,x') = L+1$, one has the inclusion 
\begin{equation*}
\{ x  \stackrel{\cV^u}{\longleftrightarrow} x' \} \subseteq \{ x  \stackrel{\cV^u}{\longleftrightarrow}  \partial_{\rm int} B(x,L)\}\,,
\end{equation*}

\medskip\n
because any path in $E$ from $x$ to $x'$ must at some point exit $B(x,L)$. This observation combined with (\ref{4.4}) implies that for $u > u_{**}$ the connectivity function has a stretched exponential decay:
\begin{equation}\label{4.10}
\IP[ x  \stackrel{\cV^u}{\longleftrightarrow} x' ] \le c(u)\,e^{-c'(u) d(x,x')^\kappa} \;\mbox{for $x,x'$ in $E$}\,,  
\end{equation}
with $\kappa$ as in (\ref{4.4}).

\bigskip\n
4) It is plain that the argument employed in the proof of Theorem \ref{theo4.1} is quite robust and could be adapted to different collections of decreasing events having the cascading property. \hfill $\square$

\end{remark}

\section{Positivity of $\pmb{u_*}$ and connectivity in half-planes}
\setcounter{equation}{0}

In this section we explore the percolative properties of the vacant set of random interlacements on $E$ in the super-critical phase of the model. We show that $u_* > 0$ when $\alpha \ge 1 + \frac{\beta}{2}$, i.e. when $\nu \ge 1$, and that for small $u > 0$, $\cV^u$ percolates in half-planes, see (\ref{3.8}) for the definition of half-planes. Additionally we derive stretched exponential bounds on the $*$-connectivity function of $\cI^u$ inside half-planes. Our main results appear in Theorem \ref{theo5.1} and Corollary \ref{cor5.5}. The proof of Theorem \ref{theo5.1} has some flavor of the proof of Theorem 5.3 of \cite{Szni08}, where a similar restriction $\alpha \ge 1 + \frac{\beta}{2}$ is present. This restriction rules out examples such as $E = G \times \IZ$, where $G$ is the discrete skeleton of the Sierpinski gasket, cf.~\cite{Jone96}, \cite{BarlCoulKuma05}. It is an interesting question whether $u_* > 0$ remains true in this case. Let us point out that when $E = \IZ^{d+1}$, $d \ge 2$, the condition $\nu \ge 1$ is automatically fulfilled and the results of this section go beyond present knowledge, cf.~\cite{SidoSzni09b},  \cite{Teix09b}, \cite{Teix10}.

\medskip
We recall the definition of half-planes in (\ref{3.8}) and of $*$-paths below (\ref{3.9}). We then introduce
\begin{equation}\label{5.1}
\wt{u} = \inf\big\{ u \ge 0; \;\underset{L \r \infty}{\underline{\lim}} \;\sup\limits_{\cP, x \in \cP} \;\IP\big[ B(x,L)  \stackrel{* - \cI^u \cap \cP}{\longleftrightarrow} \partial_{\rm int} B(x,2L)\big] > 0\big\}\,,
\end{equation}

\n
(when $L$ is a positive integer, the event inside the probability coincides with $B^u_{x,L,\cP}$, where $B_{x,L,\cP}$ has been defined in (\ref{3.10})). The main result of this section is the following.

\begin{theorem}\label{theo5.1}
\begin{equation}\label{5.2}
0 \le \wt{u} \le u_*\,,
\end{equation}
moreover for $u < \wt{u}$,
\begin{equation}\label{5.3}
\mbox{$\IP$-a.s., $\cV^u$ percolates in every half-plane $\cP$}\,,
\end{equation}

\n
and there exist positive $c_6(u)$, $c_7(u)$, and $0 < \wt{\kappa}(u) < 1$, such that
\begin{equation}\label{5.4}
\sup\limits_{\cP, x \in \cP} \;\IP\big[ B(x,L)  \stackrel{* - \cI^u \cap \cP}{\longleftrightarrow} \partial_{\rm int} B(x,2L)\big]  \le c_6 \,e^{-c_7 L^{\wt{\kappa}}}, \;\mbox{for $L \ge 1$}\,.
\end{equation}

\n
In addition when $\alpha \ge 1 + \frac{\beta}{2}$, i.e.~$\nu \ge 1$, one has
\begin{equation}\label{5.5}
0 < \wt{u} \le u_*\,.
\end{equation}
\end{theorem}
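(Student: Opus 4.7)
The plan is to derive (\ref{5.4}) from the cascading property of the family $\cB$ via Corollary \ref{cor3.5}, deduce (\ref{5.3}) from (\ref{5.4}) by planar duality in the quasi-two-dimensional graph $\cP$, obtain (\ref{5.2}) as an immediate consequence, and finally establish (\ref{5.5}) by applying Corollary \ref{cor3.7} to $\cB$.

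For (\ref{5.4}), fix $u < \wt u$ and choose $u_0 \in (u,\wt u)$. By definition (\ref{5.1}) of $\wt u$, $\underline{\lim}_{L \r \infty} \sup_{\cP, x \in \cP} \IP[B^{u_0}_{x,L,\cP}] = 0$. Since $\cB$ is a family of increasing events cascading uniformly with complexity at most $\beta/2$ by (\ref{3.12}), the uniform-parameter version of Corollary \ref{cor3.5} applied to $\cB$ at level $u_0$ with $\ov u = u < u_0$ produces integers $L_0, \ell_0 \ge 1$ such that $\sup_{\cP, x \in \cP} \IP[B^u_{x,L_n,\cP}] \le 2^{-2^n}$ for every $n \ge 0$, with $L_n = \ell_0^n L_0$. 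A routine covering argument, identical to the one in (\ref{4.6}), extends this doubly-exponential estimate to arbitrary $L \ge 1$ and yields (\ref{5.4}) with $\wt \kappa = \log 2/\log \ell_0$.

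For (\ref{5.3}), fix $u < \wt u$, a half-plane $\cP$ and a base point $x_0 \in \cP$. By (\ref{5.4}) the probabilities $\IP[B(x_0, L_n) \stackrel{*-\cI^u \cap \cP}{\longleftrightarrow} \partial_{\rm int} B(x_0, 2L_n)]$ are summable in $n$; Borel--Cantelli therefore yields $\IP$-a.s.\ the existence of a (random) index $n_0 = n_0(\o)$ such that for every $n \ge n_0$ no $*$-path of $\cI^u \cap \cP$ crosses the corresponding annulus. On this event, planar duality inside the quasi-two-dimensional graph $\cP$ produces for each such $n$ a nearest-neighbor encircling circuit $\gamma_n \subseteq \cV^u \cap \cP$ sitting inside the annulus and surrounding $B(x_0, L_n) \cap \cP$. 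Were the cluster of $\gamma_{n_0}$ in $\cV^u \cap \cP$ finite, it would be enclosed by some $*$-circuit of $\cI^u \cap \cP$ at a scale $L_m$ with $m > n_0$, producing a radial $*$-crossing of the $m$-th annulus and contradicting the choice of $n_0$. Hence the cluster of $\gamma_{n_0}$ is infinite, which proves (\ref{5.3}). Claim (\ref{5.2}) follows at once: percolation of $\cV^u$ in a half-plane of $E$ yields percolation in $E$, so $u < \wt u$ forces $u < u_*$, whence $\wt u \le u_*$; the inequality $\wt u \ge 0$ is trivial.

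The crux of the theorem is (\ref{5.5}). To apply Corollary \ref{cor3.7} to $\cG = \cB$, take $\lambda = \beta/2$ and some integer $M \ge [\beta/\nu]$; since $\nu \ge 1$ and $\beta \le \alpha + 1$, $\beta/\nu \le \alpha + 1$ is finite. It then remains to verify condition (\ref{3.30}): the joint probability under $\bigotimes^M_{i=1} P_{x_i}$ that the union of the ranges of $M$ independent walks starting from $x_1, \dots, x_M \in \partial_{\rm int} B(x, 20L)$ contains a $*$-path in $\cP$ from $B(x,L)$ to $\partial_{\rm int} B(x, 2L)$ should tend to zero as $L \r \infty$, uniformly in $x \in E$, the starting points, and the half-plane $\cP$. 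The approach is to observe that any such $*$-path must traverse order $L$ disjoint vertical cross-sections of the annulus in $\cP$, each of cardinality of order $L^{\beta/2}$ and of $d$-diameter at most of order $L$, and then to use (\ref{1.9}), (\ref{1.23}), the capacity estimate (\ref{1.26}) together with a chaining argument based on Lemma \ref{lem1.2} to control the probability that the $M$ walks collectively pierce a long chain of such cross-sections, summing afterwards over the combinatorial assignment of cross-sections to walks. The hard part will be producing a product-type bound that overcomes the combinatorial count of possible $*$-paths; this is precisely where the hypothesis $\nu \ge 1$ enters, ensuring the relevant capacity-to-distance ratios are small enough for the product to decay fast. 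Once (\ref{3.30}) is verified, Corollary \ref{cor3.7} produces some $u > 0$ with $\sup_{\cP, x \in \cP} \IP[B^u_{x, L_n, \cP}] \le 2^{-2^n}$ for all $n$, forcing $\underline{\lim}_L \sup_{\cP, x \in \cP} \IP[B^u_{x,L,\cP}] = 0$ and hence $\wt u \ge u > 0$.
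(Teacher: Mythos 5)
Your treatment of (\ref{5.4}) via Corollary \ref{cor3.5} applied to the uniformly cascading family $\cB$, and of (\ref{5.2}) once (\ref{5.3}) is known, agrees with the paper. However, your proof of (\ref{5.3}) has a genuine gap at the contradiction step. After Borel--Cantelli you only control radial $*$-crossings of annuli centered at the single point $x_0$. If the cluster of $\gamma_{n_0}$ were finite, duality gives an enclosing $*$-arc of $\cI^u \cap \cP$ with endpoints on the boundary line of the half-plane, lying outside $B(x_0,L_{n_0})$; but nothing forces this arc to join $B(x_0,L_m)$ to $\partial_{\rm int} B(x_0,2L_m)$ for any $m$. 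Since consecutive $*$-neighbors are at $d$-distance at most $1$, the arc can keep its distance from $x_0$ inside an interval $[R,R']$ with $R' < 2R$ (for instance in a gap between the scales $2L_m$ and $L_{m+1}$), winding from the boundary line around the cluster and back without producing any radial crossing of an annulus centered at $x_0$. This is precisely why (\ref{5.4}) carries a supremum over all centers $x \in \cP$, and why the paper argues differently: it forces $B(x_0,M)\cap\cP \subseteq \cV^u$ by FKG, bounds the probability of a blocking $*$-path from $\IL_+ \backslash B(x_0,M)$ to $\IL_- \backslash B(x_0,M)$ by locating the rightmost point of such a path on the horizontal ray and using annuli re-centered at the points $\wt{x}_k$ along that ray, see (\ref{5.7}), and then invokes a zero--one law to pass from (\ref{5.6}) to (\ref{5.3}). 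Your scheme can be repaired along these lines, but as written the implication ``finite cluster $\Rightarrow$ radial $*$-crossing at some scale $m>n_0$ centered at $x_0$'' is false.

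For (\ref{5.5}) you correctly reduce, via (\ref{3.12}) and Corollary \ref{cor3.7} with $M \ge [\beta/\nu]$, to verifying condition (\ref{3.30}) for $\cB$, i.e.\ (\ref{5.9}); but this verification, which is the real content of the theorem, is left as ``the hard part'' and is not carried out. Moreover your geometric reduction is incomplete: a crossing $*$-path need not traverse order $L$ vertical cross-sections, since it may instead cross order $L^{\beta/2}$ horizontal lines, so both types of traversal must be handled. The paper does not enumerate $*$-paths or sum over assignments at all: it covers the crossing region by $O((\log L)^{\beta/2})$ rectangles $D$ of height $H = [(L/\log L)^{\beta/2}]$, see (\ref{5.11}), notes that any crossing forces the union of the $M$ ranges to meet at least $L/10$ vertical segments or $H/10$ horizontal segments of one such $D$, bounds the expected numbers $N_{\rm vert}$, $N_{\rm hor}$ of visited segments through Green-function hitting estimates (Lemmas \ref{lem5.2} and \ref{lem5.3}, which is where $\nu \ge 1$ quantitatively enters), and then applies Khasminskii's lemma and the exponential Chebyshev inequality, using $L/N_{\rm vert},\, H/N_{\rm hor} \ge c \log L / \log\log L$, to make the quantity $A$ of (\ref{5.12}) tend to $0$. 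None of these quantitative steps, nor any substitute for them, appear in your proposal, so the crucial claim $\wt{u} > 0$ when $\nu \ge 1$ is not established.
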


\begin{proof}
The claim (\ref{5.4}) is a direct application of Corollary \ref{cor3.5}, and similar considerations as below (\ref{4.5}). The claim (\ref{5.2}) follows immediately once we prove (\ref{5.3}). We now prove (\ref{5.3}). To this end we consider $u < \wt{u}$ and $\cP = \{y(n); n \ge 0\} \times \IZ$, where $y(n)$, $n \ge 0$, is a semi-infinite geodesic in $G$. We use the notation $x_0 = (y(0), 0)$, and $\IL_+ = \{y(0)\} \times [0,\infty)$, $\IL_- = \{y(0)\} \times (- \infty, -1]$. The claim (\ref{5.3}) is a consequence of
\begin{equation}\label{5.6}
\mbox{$\IP[x_0$ belongs to an infinite connected component of $\cV^u \cap \cP] > 0$}\,.
\end{equation}

\n
Indeed a similar argument as in the proof of (\ref{1.39}), see also (\ref{1.42}), shows that a zero-one law holds for the probability that $\cV^u$ percolates in $\cP$. Now for any integer $M > 0$, the probability in (\ref{5.6}) is at least
\begin{equation}\label{5.7}
\begin{array}{l}
\mbox{$\IP [B(x_0,M) \cap \cP \subseteq \cV^u$ and there is no $*$-path in $\cI^u \cap \cP$ from}
\\
\IL_+ \backslash B(x_0,M) \; \mbox{to} \; \IL_- \backslash B(x_0,M)] \stackrel{(\ref{1.34})}{\ge} 
\\[2ex]
\mbox{$\IP [B(x_0,M) \cap \cP \subseteq \cV^u] \;\IP[$ there is no $*$-path in $\cI^u \cap \cP$ from}
\\[1ex]
\IL_+ \backslash B(x_0,M) \; \mbox{to} \; \IL_- \backslash B(x_0,M)] \,.
\end{array}
\end{equation}

\n
The first factor in the last line equals $\exp\{ - u \, {\rm cap} (B(x_0,M) \cap \cP)\} > 0$. As for the last factor, we consider the complement of the event under the probability, and keep track of the largest $B(x_0,2^k M)$ not containing the rightmost point of the $*$-path on $\{(y(n), 0)$; $n \ge 0\}$. Setting $\wt{x}_k = (y(2^{k+1} M + 1), 0)$, for $k \ge 0$, we see that the last factor is at least
\begin{equation*}
\begin{array}{l}
1 - \dsl_{k \ge 0} \,\IP[B(\wt{x}_k,2^k M)  \stackrel{* - \cI^u \cap \cP}{\longleftrightarrow} \partial_{\rm int} B(\wt{x}_k,2^{k+1} M)] \stackrel{(\ref{5.4})}{\ge} 
\\[3ex]
1 - \dsl_{k \ge 0} \,c(u)\,e^{-c'(u)(2^k M)^{\wt{\kappa}}} > 0, \;\mbox{when $M$ is large}\,.
 \end{array}
\end{equation*}

\n
This proves (\ref{5.6}) and thus completes the proof of (\ref{5.3}).

\medskip
We now turn to the proof of (\ref{5.5}) and assume from now on unless otherwise specified that $\nu = \alpha - \frac{\beta}{2} \ge 1$. In essence this assumption makes it hard for the random walk on $E$ to have a trace in a half-plane that covers a large $*$-path. We define the integer $M$ via
\begin{equation}\label{5.8}
M = \Big[\dis\frac{\beta}{\nu}\Big]\,.
\end{equation}

\n
In view of (\ref{3.12}) and Corollary \ref{cor3.7}, (\ref{5.4}) will hold for small $u>0$, and our claim (\ref{5.5}) will follow once we prove that:
\begin{equation}\label{5.9}
\underset{L \r \infty}{\underline{\lim}} \;\wt{\sup} \; \bigotimes^M_{i=1} \;P_{x_i} [\chi_{\cup^M_1 \,{\rm range}(X_\point^i)} \in B_{x,L,\cP}] = 0\,,
\end{equation}

\n
where the notation $\wt{\sup}$ refers to a supremum over $\cP$, $x$ in $\cP$, $x_1,\dots,x_M$ in $\partial_{\rm int} B(x,20L)$, and $B_{x,L,\cP}$ appears in (\ref{3.10}).

\medskip
Given a half-plane $\cP$ and $x$ in $\cP$, we can find, depending on $x$ and $L\ge 1$, three or four rectangles $\wt{D} = \wt{W} \times \wt{J}$ in $\cP \cap (\partial_{\rm int} B(x,L) \cup (B(x,2L) \backslash B(x,L)))$, bordering $\partial_{\rm int} B(x,L) \cap \cP$, with $L \le |\wt{W}| \le 5L$, $[L^{\beta/2}] \le |\wt{J}| \le [(5 L)^{\beta/2}]$, and such that any $*$-path in $\cP$ from $B(x,L) \cap \cP$ to $\partial_{\rm int} B(x,2L) \cap \cP$ contains a $*$-path in $\cP$ joining the opposite sides of one of these rectangles, (see Figure 2).

\psfragscanon
\begin{center}
\includegraphics[width=11cm]{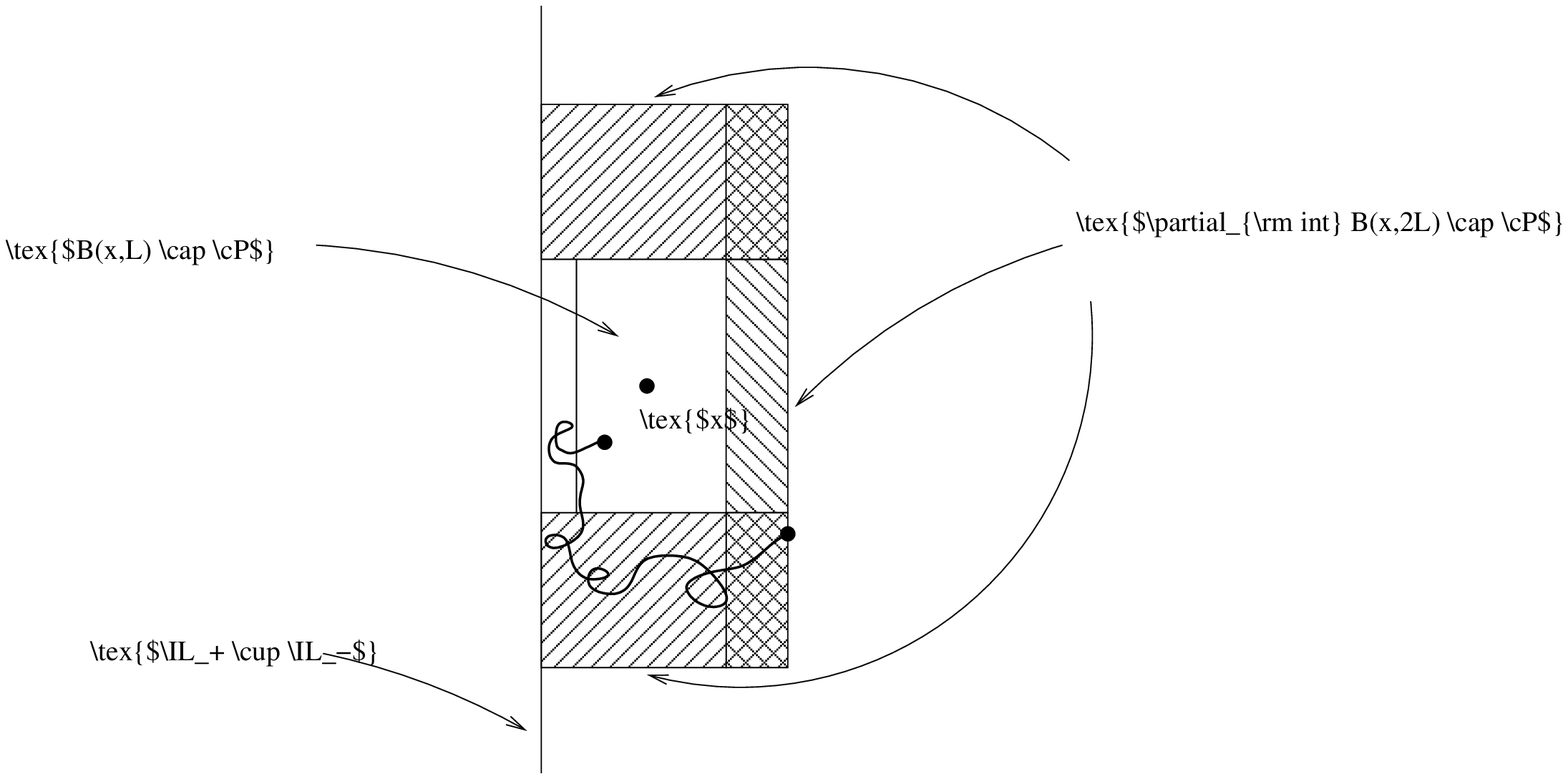}
\end{center}

\begin{center}
\begin{tabular}{ll}
Fig.~2: & An illustration with three rectangles bordering $\partial_{\rm int} B(x,L) \cap \cP$
\\
&and a $*$-path in $\cP$ from $B(x,L) \cap \cP$ to $\partial_{\rm int} B(x,2L) \cap \cP$
\\
&inducing a crossing of one of the three rectangles.
\end{tabular}
\end{center}

\medskip\n
We now define
\begin{equation}\label{5.11}
\begin{array}{l}
\mbox{$H = [(L/ N)^{\frac{\beta}{2}}]$, where $N = \log L$, and $L \ge 10^3$ is large enough}
\\
\mbox{so that $10^3 \le H < [L^{\frac{\beta}{2}}]$}\,.
\end{array}
\end{equation}

\medskip\n
We can cover each of the three or four above rectangles $\wt{D}$ by at most $c\, N^{\beta/2}$ overlapping rectangles $D$ having same horizontal projection as $\wt{D}$ and a vertical projection of cardinality $H$, so that every $*$-path joining opposite sides of $\wt{D}$ has a trace in at least one of the rectangles $D$ with either a horizontal projection containing a segment of at least $L/10$ points, or a vertical projection containing a segment of at least $H/10$ points. To see this, one for instance considers the collection of rectangles $D$ ``vertical translates'' of the form $\wt{W} \times ([1, H] + k[H / 100])$, $k \in \IZ$, that intersect $\wt{D} = \wt{W} \times \wt{J}$. One then looks at the location of the $*$-path when it reaches the ``middle'' of the side of $\wt{D}$ it crosses, and picks one of the rectangles $D$ with ``closest center'' to that location.

\medskip
From the above considerations we see that for large $L$, the expression inside the $\liminf$ in (\ref{5.9}) is bounded from above by
\begin{equation}\label{5.12}
\begin{split}
A \stackrel{\rm def}{=} c\,N^{\frac{\beta}{2}} \overline{\sup} \;\bigotimes\limits^M_{i=1} \,P_{x_i} \Big[ & \bigcup^M_{i=1} \,{\rm range} (X_\point^i) \cap D \;\mbox{has $G$-projection with at least $L/10$}
\\
& \mbox{points or $\IZ$-projection with at least $H/10$ points$\Big]$},
\end{split}
\end{equation}

\n
where the notation $\overline{\sup}$ refers to a supremum over $\cP$, $x_1,\dots,x_M$ in $\cP$, and $D = W \times J$, a rectangle in $\cP$, with $L \le |W| \le 5L$, and $|J| = H$.

\medskip
For a rectangle as above we introduce the notation
\begin{equation}\label{5.13}
J_y = \{y\} \times J, \;y \in G, \;W_z = W \times \{z\}, \, z \in \IZ\,.
\end{equation}

\n
The following lemma comes as a preliminary step in bounding $A$ in (\ref{5.12}). It will enable us to control the average number of vertical segments $J_y, y \in W$, and horizontal segments $W_z, z \in J$, of $D$ visited by a random walk on $E$ starting from an arbitrary location. The bound on $A$ will then follow by an application of Khashminskii's lemma, see~\cite{Khas59}. The calculations are similar to what appears in the proof of Corollary 5.3 of \cite{Szni08}.

\begin{lemma}\label{lem5.2} $(\nu \ge 1, L \ge c)$

\medskip
When $x = (y,z)$, $\ov{x} = (\ov{y}, \ov{z})$ are in $D$,
\begin{equation}\label{5.14}
P_x[H_{J_{\ov{y}}} < \infty]  \le \left\{ \begin{array}{ll} c(H \wedge d_G(y,\ov{y})^{\beta/2}) / d_G(y,\ov{y})^\nu, \;\mbox{when} \; \alpha > \beta\,,
\\[1ex]
 c\{1 + \log (H^{\frac{2}{\beta}}/ d_G(y,\ov{y}))\} / \log H, \;\mbox{when $\alpha = \beta$ and}
\\[1ex]
\hspace{6cm}  0 < d_G(y,\ov{y}) < H^{\frac{2}{\beta}},
\\[1ex]
  c\,H \,d_G (y,\ov{y})^{-\frac{\beta}{2}} / \log H, \;\mbox{when $\alpha = \beta$ and $d_G(y,\ov{y}) \ge  H^{\frac{2}{\beta}}$},
\\[1ex]
  c(1 \wedge (H^{\frac{2}{\beta}} / d_G(y,\ov{y}))^\nu), \;\mbox{when $\alpha < \beta$},
\end{array}\right.
\end{equation}
as well as
\begin{equation}\label{5.15}
\hspace{-10ex} P_x [H_{W_{\ov{z}}} < \infty]  \le \left\{\begin{array}{ll} c (1 \wedge |z-\ov{z}|^{\frac{2}{\beta} (1 - \nu)}), \;\mbox{if $\nu > 1$}\,,
\\[1ex]
  c (1 + \log (L / |z- \ov{z}|^{\frac{2}{\beta}})) / \log L, \;\mbox{if $\nu = 1$}\,.
\end{array}\right.
\end{equation}
\end{lemma}

\begin{proof}
We begin with the proof of (\ref{5.14}). We apply the right-hand inequality of (\ref{1.25}) with $K = J_{\ov{y}}$. With this in mind we introduce the function
\begin{equation}\label{5.16}
\hspace{-6.5cm}\cJ_{\ov{y}}(x') = \dsl_{\wt{x} \in J_{\ov{y}}}\,g(x',\wt{x}), \;\mbox{for $x' \in E$}\,.
\end{equation}
We note that by (\ref{1.9}) when $x' \in J_{\ov{y}}$, one has
\begin{equation}\label{5.17}
\hspace{-3.2cm} \cJ_{\ov{y}}(x') \ge c\,\dsl_{\ell = 1}^H \, \ell^{-(\frac{2\alpha}{\beta} - 1)}  \ge \left\{\begin{array}{l}  c, \; \mbox{when $\alpha > \beta$}\,,
\\[0.5ex]
 c \,\log H, \;\;\mbox{when $\alpha = \beta$}\,,
\\[0.5ex] 
c \,H^{2 - \frac{2 \alpha}{\beta}}, \;\mbox{when $\alpha < \beta$}\,.
\end{array}\right.
\end{equation}

\n
On the other hand when $x \notin J_{\ov{y}}$, with $x$ as in (\ref{5.14}), it follows from (\ref{1.9}) that
\begin{equation}\label{5.18}
\hspace{-6.2cm} \cJ_{\ov{y}}(x)  \le c\, \dsl_{\ell = 1}^H \,\big(d_G(y,\ov{y})^\nu \vee \ell^{\frac{2\alpha}{\beta}-1}\big)^{-1}\,,
\end{equation}

\n
so that when $d_G(y,\ov{y})^{\frac{\beta}{2}} \le H$, keeping track of whether $\ell \le d_G(y,\ov{y})^{\beta/2}$ or not, we find that
\begin{equation}\label{5.19}
\hspace{-3cm}\cJ_{\ov{y}}(x) \le \left\{\begin{array}{l} c\,d_G(y,\ov{y})^{\frac{\beta}{2} - \nu}, \;\; \mbox{when $\alpha > \beta$},
\\[1ex]
 c\,(1 + \log (H / d_G(y,\ov{y})^{\frac{\beta}{2}})) ,  \;\mbox{when $\alpha = \beta$}\,,
\\[1ex]
 c \,H^{2 - \frac{2 \alpha}{\beta}}, \;\mbox{when $\alpha < \beta$}\,.
\end{array}\right.
\end{equation}

\n
On the other hand when $d_G(y,\ov{y})^{\frac{\beta}{2}} > H$, we find instead in all cases that
\begin{equation}\label{5.20}
\cJ_{\ov{y}}(x) \le c\,H \,d_G(y,\ov{y})^{-\nu}\,.
\end{equation}

\n
The right-hand inequality in (\ref{1.25}) yields that $P_x[H_{J_{\ov{y}}} < \infty]$ is bounded by \\
 $\cJ_{\ov{y}}(x) /$ $\inf_{x' \in \cJ_{\ov{y}}} \cJ_{\ov{y}}(x')$, and (\ref{5.14}) now follows (note that $\nu = \frac{\beta}{2}$, when $\alpha = \beta$).

\medskip
We then continue with the proof of (\ref{5.15}), and now introduce the function
\begin{equation}\label{5.21}
\cW_{\ov{z}}(x') = \dsl_{\wt{x} \in W_{\ov{z}}} \,g(x',\wt{x}), \;\mbox{for $x' \in E$}\,.
\end{equation}

\n
We find by (\ref{1.9}) that when $x' \in W_{\ov{z}}$, (remember that $L \le |W| \le 5L)$, one has:
\begin{equation}\label{5.22}
\cW_{\ov{z}}(x') \ge c \,\dsl_{\ell=1}^{|W|} \, \ell^{-\nu}  \ge \left\{ \begin{array}{l}
c, \;\mbox{when $\nu > 1$}\,,
\\ 
c \,\log L, \;\mbox{when $\nu = 1$}\,.
\end{array}\right.
\end{equation}

\n
On the other hand, when $x \notin W_{\ov{z}}$, with $x$ as in (\ref{5.15}), we have by (\ref{1.9}):
\begin{equation}\label{5.23}
\begin{split}
\cW_{\ov{z}}(x) &  \le c \,\dsl_{\ell=1}^{|W|} \, \big(\ell^{\nu}  \vee |z - \ov{z}|^{\frac{2 \alpha}{\beta} - 1}\big)^{-1}
\\
& \le c \,|z - \ov{z}|^{\frac{2}{\beta} + 1 - \frac{2\alpha}{\beta}} + c \dsl_{|z - \ov{z}|^{\frac{2}{\beta}} < \ell \le |W|} \ell^{-\nu}
\\
& \le c \,|z - \ov{z}|^{\frac{2}{\beta} (1-\nu)}, \;\mbox{when $\nu > 1$}\,,
\\
& \le c(1 + \log (L / |z - \ov{z}|^{\frac{2}{\beta}})), \;\mbox{when $\nu =1$}\,.
\end{split}
\end{equation}

\medskip\n
Using once again the right-hand inequality of (\ref{1.25}) we find that $P_x[H_{W_{\ov{z}}} < \infty]$ is bounded by $\cW_{\ov{z}}(x) / \inf_{x' \in W_{\ov{z}}} \cW_{\ov{z}}(x')$, and (\ref{5.15}) follows.
\end{proof}

With the help of the above lemma, we will now derive an upper bound on the expected number of vertical or horizontal segments in $D$ visited by the random walk in $E$. We first introduce some notation. As a consequence of the strong Markov property at time $H_D$, we see that
\begin{align}
N_{\rm vert}& \stackrel{\rm def}{=} \sup\limits_{x \in E} \; E_x \Big[\dsl_{\ov{y} \in W}\,1\{H_{J_{\ov{y}}} < \infty\}\Big] = \sup\limits_{x \in D} \; E_x \Big[\dsl_{\ov{y} \in W} \,1\{H_{J_{\ov{y}}} < \infty\}\Big]\,,\label{5.24}
\\[-2ex]
\intertext{and that}
N_{\rm hor}& \stackrel{\rm def}{=} \sup\limits_{x \in E} \; E_x \Big[\dsl_{\ov{z} \in J}\,1\{H_{W_{\ov{z}}} < \infty\}\Big] = \sup\limits_{x \in D} \; E_x \Big[\dsl_{\ov{z} \in J} \,1\{H_{W_{\ov{z}}} < \infty\}\Big]\,,\label{5.25}
\end{align}

\medskip\n
where $D \subseteq \cP$ is a rectangle of the form $D= W \times J$, with $L \le |W| \le 5L$, and $|J| = H (< L^{\frac{\beta}{2}})$, see (\ref{5.11}). The following lemma encapsulates the estimates that we will use, and that are stated in a simplified form to avoid tracking all special values of the parameters.

\begin{lemma}\label{lem5.3} $(\nu \ge 1, L \ge c)$
\begin{align}
N_{\rm vert}&\le \; c\, H^{\frac{2}{\beta}} ( 1+ \log (L / H^{\frac{2}{\beta}})) /(1 + 1\{\alpha = \beta\} \log L)\,,\label{5.26}
\\
N_{\rm hor}& \le \; c\, H ( 1+ \log (L / H^{\frac{2}{\beta}})) / \log L\,.\label{5.27}
\end{align}
\end{lemma}

\begin{proof}
We begin with the proof of (\ref{5.26}). When $\alpha > \beta$, by the first line of (\ref{5.14}) we find 
\begin{equation}\label{5.28a}
\begin{split}
N_{\rm vert}& \le \dsl_{1 \le \ell < H^{\frac{2}{\beta}}} c \,\ell^{-(\alpha - \beta)} + \dsl_{ H^{\frac{2}{\beta}} \le \ell \le |W|} c\,H \ell^{-\nu}
\\
& \le c\,H^{\frac{2}{\beta} (1 + \beta - \alpha)_+} + c(\log H) \,1\{\alpha = \beta + 1\}\,,
\end{split}
\end{equation}
whence (\ref{5.26}).

\medskip
When $\alpha = \beta$, using the second and third line of (\ref{5.14}) we find that:
\begin{equation}\label{5.28}
\begin{split}
N_{\rm vert}&\le \dsl_{1 \le \ell < H^{\frac{2}{\beta}}} \;\dis\frac{c}{\log H} \;\Big(1 + \log \Big(\dis\frac{H^{\frac{2}{\beta}}}{\ell}\Big)\Big) + \dsl_{H^{\frac{2}{\beta}} \le \ell \le |W|} \; \dis\frac{c}{\log H} \;H\,\ell^{-\frac{\beta}{2}}
\\[1ex]
& \le c\,H^{\frac{2}{\beta}} / \log  H + c\,H^{\frac{2}{\beta}} ( 1 + 1\{ \beta = 2\} \log (L / H^{\frac{2}{\beta}})) / \log H,
\end{split}
\end{equation}
whence (\ref{5.26}).

\medskip
When $\beta > \alpha > 1 + \frac{\beta}{2}$, (so that $\nu > 1$ and $\beta > 2$), the last line of (\ref{5.14}) yields that:
\begin{equation}\label{5.29}
N_{\rm vert} \le \dsl_{\ell = 1}^{|W|} \, c\; \Big(1 \wedge  \Big(\dis\frac{H^{\frac{2}{\beta}}}{\ell}\Big)^\nu\Big) \le c\, H^{\frac{2}{\beta}}  + c\,H^{\frac{2 \nu}{\beta}} \; \dsl_{\ell \ge H^{\frac{2}{\beta}}} \; \ell^{-\nu} \le c\,H^{\frac{2}{\beta}}, 
\end{equation}
whence (\ref{5.26}).

\medskip
Finally when $\beta > \alpha = 1 + \frac{\beta}{2}$, (so that $\nu = 1$ and $\beta > 2$), the last line of (\ref{5.14}) now yields that:
\begin{equation}\label{5.30}
N_{\rm vert} \le \dsl_{\ell = 1}^{|W|} \, c\; \Big(1 \wedge  \Big(\dis\frac{H^{\frac{2}{\beta}}}{\ell}\Big)^\nu\Big) \le  c\, H^{\frac{2}{\beta}}  \big(1 +  \log ( L / H^{\frac{2}{\beta}})\big), 
\end{equation}
whence (\ref{5.26}).

\pagebreak
We then turn to the proof of (\ref{5.27}). When $ \alpha > 1 + \frac{\beta}{2}$, then $0 > \frac{2}{\beta} \,(1-\nu) =  \frac{2}{\beta} \,(1 + \ \frac{\beta}{2} - \alpha) = 1 -  \frac{2}{\beta}\,(\alpha - 1)$, and the first line of (\ref{5.15}) implies that:
\begin{equation}\label{5.31}
N_{\rm hor} \le \dsl_{k = 1}^{H} \; c\,k^{\frac{2}{\beta} (1 - \nu)} \le c \, H^{(1 + \frac{2}{\beta} (1 - \nu))_+} + c\,1\{\alpha = 1 + \beta\} \log H\,,
\end{equation}
whence (\ref{5.27}).

\medskip
Finally when $ \alpha = 1 + \frac{\beta}{2}$, then $\nu = 1$ and the second line of (\ref{5.15}) yields that
\begin{equation}\label{5.32}
N_{\rm hor} \le \dsl_{k = 1}^{H} \; \dis\frac{c}{\log L} \; \Big(1 + \log \Big(\dis\frac{L^{\frac{\beta}{2}}}{k}\Big)\Big) \le c \, H/\log L + c\,H \log (L^{\frac{\beta}{2}}/H) / \log L \,,
\end{equation}

\n
whence (\ref{5.27}). This concludes the proof of Lemma \ref{lem5.3}. 
\end{proof}

\medskip
We are now ready to prove (\ref{5.9}). For this purpose we bound $A$ in (\ref{5.12}) with the help of Khasminskii's Lemma, cf.~\cite{Khas59}, (2.46) of \cite{DembSzni06}, or \cite{ChungZhao95}, p.71. We thus find that for all rectangles $D = W \times J$ as stated above Lemma \ref{lem5.3} we have:
\begin{align}
\sup\limits_{x \in E}& \;E_x \Big[\exp\Big\{ \dis\frac{c}{N_{\rm vert}} \;\dsl_{\ov{y} \in W} \,1\{H_{J_{\ov{y}}} < \infty\}\Big\}\Big] \le 2, \;\mbox{and} \label{5.33}
\\[2ex]
\sup\limits_{x \in E} &\; E_x \Big[\exp\Big\{ \dis\frac{c}{N_{\rm hor}} \;\dsl_{\ov{z} \in J} \,1\{H_{W_{\ov{y}}} < \infty\}\Big\}\Big] \le 2\,.\label{5.34}
\end{align}

\medskip\n
As a result of the exponential Chebyshev inequality and (\ref{5.12}) we thus find:
\begin{equation}\label{5.35}
A \le c N^{\frac{\beta}{2}} \;2^M\Big[\exp\Big\{ - c\,\mbox{\f $\dis\frac{L}{N_{\rm vert}}$} \Big\} + \exp\Big\{ - c\; \mbox{\f $\dis\frac{H}{N_{\rm hor}}$}\Big\}\Big]\,.
\end{equation}

\medskip\n
However from (\ref{5.11}) and Lemma \ref{lem5.3} we find that for $L \ge c$,
\begin{align}
 \dis\frac{L}{N_{\rm vert}} &\ge c \; \dis\frac{L}{H^{\frac{2}{\beta}}} \; \dis\frac{(1 + 1\{\alpha = \beta\} \log L)}{1 + \log (L / H^{\frac{2}{\beta}})} \ge c \; \dis\frac{\log L}{\log \log L} , \; \mbox{and} \label{5.36}
\\[2ex]
 \dis\frac{H}{N_{\rm hor}}& \ge c \; \dis\frac{\log L}{1 + \log (L / H^{\frac{2}{\beta}})} \;  \ge c \; \dis\frac{\log L}{\log \log L} \;. \label{5.37}
\end{align}

\medskip\n
Inserting these bounds in (\ref{5.35}), (recall that $N = \log L)$, implies that $A$ tends to zero as $L$ goes to infinity. This proves (\ref{5.9}) and thus concludes the proof of Theorem \ref{theo5.1}.
\end{proof}

\begin{remark}\label{rem5.4} \rm In the above proof the methods of Section 3 make the case where $\nu > \beta$, i.e.~$\alpha > \frac{3}{2} \, \beta$, much simpler to treat. Indeed $M$ in (\ref{5.8}) vanishes and so does the expression under the supremum in (\ref{5.9}). This immediately yields the claim (\ref{5.5}).

\medskip
In the case $\alpha \le \frac{3}{2} \, \beta$, the choice of rectangles $D$ where the height $H$ is such that $H^{\frac{2}{\beta}}$ is slightly smaller (by a logarithmic factor) than the base $|W| \in [L,5L]$, see (\ref{5.11}) and below (\ref{5.12}), is truly useful when handling the situation where $\beta > \alpha$, (and $\nu \ge 1$), as can be seen from (\ref{5.29}), (\ref{5.30}). Such a procedure works because $N_{\rm hor}$ nevertheless remains ``negligible'' with respect to $H$, as can be seen in (\ref{5.31}), (\ref{5.32}). \hfill $\square$
\end{remark}

\medskip
We state a consequence of (\ref{5.4}) in terms of the size of the finite connected components of $\cV^u$ in half-planes when $u < \wt{u}$. The notation is the same as in Theorem \ref{theo5.1}, with $0 < \wt{\kappa}(u) < 1$ introduced in (\ref{5.4}).

\begin{corollary}\label{cor5.5}
When $u < \wt{u}$, there exist positive $c_8(u)$, $c_9(u)$ such that
\begin{equation}\label{5.38}
\begin{split}
\sup\limits_{\cP, x \in \cP} \;\IP\big[ & \mbox{the connected component of $x$ in $\cV^u \cap \cP$ is finite}
\\[-2ex]
&\mbox{and intersects $\partial_{\rm int} B(x,L)] \le c_8 \,e^{-c_9\, L^{\wt{\kappa}}}$, for $L \ge 1$}\,.
\end{split}
\end{equation}
\end{corollary}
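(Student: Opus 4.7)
\medskip

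The plan is to reduce the event in (\ref{5.38}) to the ones controlled by (\ref{5.4}) via planar-duality in the half-plane $\cP$. Since $\cP = \{y(n): n \ge 0\}\times\IZ$ is graph-isomorphic to $\IN\times\IZ$ with nearest-neighbour edges, classical site-percolation duality applies: if the connected component $C$ of $x$ in $\cV^u\cap\cP$ is finite, then the set of $\cI^u\cap\cP$-sites $*$-adjacent to $C$ from the unbounded component of $\cP\setminus C$ contains a $*$-connected subset $\Gamma$, which is either a $*$-cycle in $\cP$ or a $*$-arc with both endpoints on $\partial\cP = \{y(0)\}\times\IZ$, and which separates $C$ from infinity in $\cP$.

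\medskip

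To handle the fact that the $d$-diameter of $C$ may be much larger than $L$, I would partition (\ref{5.38}) by the dyadic scale of $C$. For integers $k \ge 0$, let $\cC_k$ denote the intersection of the event in (\ref{5.38}) with $\{2^k L \le \max_{v \in C} d(x,v) < 2^{k+1}L\}$. Connectedness of $C$ forces $C \subseteq B(x, 2^{k+1}L)$ on $\cC_k$, and hence $\Gamma \subseteq B(x, 2^{k+1}L + 1)$. Moreover, $\Gamma$ must enclose both $x$ and some $y \in C$ with $d(x,y) \ge 2^k L$; projecting $\Gamma$ onto $\{y(n): n\ge 0\}$ and onto $\IZ$, and using the product form (\ref{1.10}) of $d(\cdot,\cdot)$, readily yields a lower bound on the $d$-diameter of $\Gamma$ of the form $2^k L / C_0$ for some absolute constant $C_0$.

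\medskip

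By $*$-connectedness and this diameter bound, there exist $z_1, z_2 \in \Gamma$ with $d(z_1, z_2) \ge 2^k L / C_0$. Setting $M_k = \lceil 2^k L / (4 C_0) \rceil$, the portion of the $*$-path along $\Gamma$ from $z_1$ to $z_2$ realises the event $B^u_{z_1, M_k, \cP}$ of (\ref{3.10}), with $z_1 \in B(x, 2^{k+1} L + 1) \cap \cP$. A union bound over $z_1$, combined with the polynomial volume estimate $|B(x, R) \cap \cP| \le c\, R^{1 + \beta/2}$ coming from the geodesic structure of $\cP$, and with the stretched-exponential bound (\ref{5.4}), yields
\begin{equation*}
\IP[\cC_k] \le c\,(2^k L)^{1 + \beta/2}\, c_6\, \exp\{-c_7 M_k^{\wt{\kappa}}\}\,.
\end{equation*}
Summing over $k \ge 0$ — the resulting series is dominated by its $k = 0$ term, and the polynomial prefactor is then absorbed into slightly modified exponential constants — delivers (\ref{5.38}).

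\medskip

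The main obstacle will be the planar-duality step: carefully establishing in the half-plane $\cP$ the existence of the $*$-connected separator $\Gamma$ with the stated form (a $*$-cycle or an arc with endpoints on $\partial\cP$), together with the coordinatewise lower bound on its $d$-diameter. The former is a standard fact for site percolation on the planar half-lattice $\IN\times\IZ$, transported to $\cP$ via the graph isomorphism; the latter follows from a direct projection argument based on the product structure of $d(\cdot,\cdot)$.
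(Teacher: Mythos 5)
Your proposal is correct and follows essentially the same route as the paper: planar duality in the half-plane yields a separating $*$-path in $\cI^u\cap\cP$, a projection argument based on the product form of $d(\cdot,\cdot)$ bounds its diameter from below by the scale of the enclosed cluster, and the claim then reduces to the crossing events of (\ref{3.10}) controlled by (\ref{5.4}), followed by the same dyadic summation of stretched exponentials. The only (harmless) difference is in how the crossing is localized: the paper tracks the rightmost point of the separating path at the height of $x$ and works with deterministic centers, as in the argument below (\ref{5.7}), whereas you take a union bound over the possible centers $z_1$ in $B(x,2^{k+1}L+1)\cap\cP$ and absorb the resulting polynomial volume factor into the stretched exponential.
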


\begin{proof}
On the event inside the probability one can find a $*$-path in $\cI^u \cap \cP$ separating in $\cP$ the connected component of $x$ in $\cV^u \cap \cP$ from infinity. The respective $G$- and $\IZ$-projections of the range of the $*$-path are ``intervals'' containing the respective $G$- and $\IZ$-projections of $x$, and either the $G$-projection contains at least $L$ points or the $\IZ$-projection contains at least $L^{\frac{\beta}{2}}$ points. One can then consider the rightmost point of the $*$-path having same height as $x$. A variation of the argument used to bound from below the last factor of (\ref{5.7}) combined with the inequalities $\sum_{k \ge 0} e^{-c(u)(2^kL)^{\wt{\kappa}}} \le \sum_{k \ge 0} \,e^{-c'(u)(k+1)L^{\wt{\kappa}}} \le c(u)\,e^{-c'(u) L^{\wt{\kappa}}}$ yields the claim (\ref{5.38}) in a straightforward fashion.
\end{proof}

\begin{remark}\label{rem5.6} \rm ~

\medskip\n
1)  In the important special case $E = \IZ^{d+1}$, $d \ge 2$, when the sup-norm distance $d_\infty(\cdot,\cdot)$ replaces $d(\cdot,\cdot)$, the results of Theorem \ref{theo5.1} and Corollary \ref{cor5.5}, show that the plane exponent:
\begin{equation}\label{5.39}
\wt{u}_{ {\rm pl}} = \inf\big\{ u \ge 0; \;\underset{L \r \infty}{\underline{\lim}} \;\IP[B_\infty(0,L) \stackrel{* - \cI^u \cap \IZ^2}{\longleftrightarrow} B(0,2L)] > 0\big\}\,,
\end{equation}

\medskip\n
(which coincides with $\wt{u}$), is such that for any $d \ge 2$,
\begin{equation}\label{5.40}
0 <\wt{u}_{{\rm pl}}  \le u_* ,
\end{equation}
that for $u < \wt{u}_{{\rm pl}}$,
\begin{equation}\label{5.41}
\mbox{$\IP$-a.s., $\cV^u$ percolates in all planes of $\IZ^d$},
\end{equation}

\n
and there exist $c(u), \,c'(u), \,0 < \gamma(u) < 1$, such  that for $L \ge 1$,
\begin{align}
\IP\big[ &B_\infty(0,L)  \stackrel{* - \cI^u \cap \IZ^2}{\longleftrightarrow} \partial_{\rm int}  B_\infty(0,2L)\big] \le c \,e^{-c' \,L^\gamma}, \;\mbox{and}\label{5.42}
\\[1ex]
\IP\big[&\mbox{the connected component of $0$ in $\cV^u \cap \IZ^2$ is finite and intersects} \label{5.43}
\\
&\partial_{\rm int} B_\infty(0,L)\big] \le c\,e^{-c'\,L^{\gamma}}\,. \nonumber
\end{align}

\n
The above sharpens previously known results of \cite{SidoSzni09a}, cf.~Remark 3.5 2), where arbitrary polynomial decay in $L$ for the left-hand side of (\ref{5.42}), when $u$ is sufficiently small has been established. It also complements the separation results from \cite{Teix09b}, for small $u > 0$ and $d+1 \ge 5$, see also (\ref{3.20}) and Remark \ref{rem3.8} 1).

\medskip\n
2) It is a very interesting question to understand what happens when $\alpha < 1 + \frac{\beta}{2}$. Does one of the values $\wt{u}$ or $u_*$ vanish, or both? Can one successfully apply Corollary \ref{cor3.7} to a cascading family in the spirit of the family $\cS$ of separation events, which has been introduced in Remark \ref{rem3.3} 3)? The special case of $E = G \times \IZ$, where $G$ is the discrete skeleton of the Sierpinski gasket endowed with its natural weights, corresponds to $\alpha = \frac{\log 3}{\log 2}$, $\beta = \frac{\log 5}{\log 2}$, cf.~\cite{Jone96},  \cite{BarlCoulKuma05}, and is somehow emblematic of this puzzle. \hfill $\square$
\end{remark}

\appendix
\section{Appendix}
\setcounter{equation}{0}

The main object of this appendix is to prove that the collection $\cS$ of ``separation events" introduced in Remark \ref{rem3.3} 3) has the cascading property. The proof is in essence an adaptation of the arguments that appear in Lemma 5.1 and Theorem 5.2 of Teixeira \cite{Teix10}, but the geometry is possibly quite different in the present set-up. Our main result is stated in Proposition \ref{propA.2}. We refer to the beginning of Section 1 for notation, and recall that $d_E(\cdot,\cdot)$ denotes the graph-distance on $E$, which should not be confused with $d(\cdot,\cdot)$, see (\ref{0.3}).

\medskip
We first note that for any subsets $A, B$ of $E$, one has
\begin{equation}\label{A.1}
d_E(A,B) > 1 \Longleftrightarrow \ov{A} \cap B = \emptyset  \Longleftrightarrow A \cap \ov{B} = \emptyset \,.
\end{equation}

\n
We say that $A$ and $B$ are separated by $C$ in $U$, all of them being subsets of $E$, when $d_E(A,B) > 1$ and any path from $\partial A$ to $\partial B$ remaining in $U$ meets $C$. The collection of events $\cS = (S_{x,L})_{x \in E, L \ge 1\;{\rm integer}}$ introduced in (\ref{3.20a}) and discussed in this appendix is defined by:
\begin{equation}\label{A.2}
\begin{split}
S_{x,L}  = \big\{ & \mbox{$\sigma \in \{0,1\}^E$; there exist connected subsets $A_1$ and $A_2$ of $\ov{B(x,3L)}$ with}
\\
& \mbox{$d(\cdot,\cdot)$-diameter at least $L$, separated by $\Sigma (\sigma)$ in $B(x,5L)\big\}$}\,,
\end{split}
\end{equation}

\n
where $\Sigma(\sigma) = \{x \in E; \sigma(x) = 1\}$. For simplicity we will often write $\Sigma$ in place of $\Sigma(\sigma)$, and diameter in place of $d(\cdot,\cdot)$-diameter. Our first result is:

\begin{lemma}\label{lemA.1} $(\ell$ multiple of $100$, $L \ge 1$ integer, $x_*$ in $E$)

\medskip
Assume $A_1,A_2 \subseteq \ov{B(x_*,3 \ell L)}$ are connected subsets of diameter at least $\ell L$, that $\sigma \in \{0,1\}^E$ is such that $\Sigma$ separates $A_1$ and $A_2$ in $B(x_*, 5 \ell L)$, and that $x_1,\dots, x_M$ is a sequence in $E$ such that:
\begin{equation}\label{A.3}
\begin{array}{rl}
{\rm i)} & B(x_i,5L) \subseteq B(x_*, 5 \ell L), \; 1 \le i \le M\,,
\\[1ex]
{\rm ii)} & d(x_i,x_{i+1}) \le L, \;\mbox{for} \; 1 \le i < M\,,
\\[1ex]
{\rm iii)} &A_1 \cap B(x_1,2L) \not= \emptyset \;\mbox{and} \;A_2 \cap B(x_M,2L) \not= \emptyset\,.
\end{array}
\end{equation}
It then follows that
\begin{equation}\label{A.4}
\mbox{for some $i \in \{1,\dots, M\}$, $\sigma \in S_{x_i,L}$}\,.
\end{equation}
\end{lemma}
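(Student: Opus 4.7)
My plan is to identify an index $i^{*} \in \{1, \ldots, M\}$ at which both $A_1$ and $A_2$ contribute connected subsets of $\overline{B(x_{i^{*}}, 3L)}$ of $d$-diameter at least $L$, and then transfer the global separation by $\Sigma$ in $B(x_{*}, 5\ell L)$ to a local separation in $B(x_{i^{*}}, 5L)$, so as to conclude $\sigma \in S_{x_{i^{*}}, L}$.

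The key preparatory step is a local diameter bound: if $A \subseteq E$ is connected with $d$-diameter at least $\ell L$ and meets $B(x, 2L)$, then the connected component of $A \cap \overline{B(x, 3L)}$ containing any chosen $y \in A \cap B(x, 2L)$ has $d$-diameter at least $L$. Indeed, the diameter hypothesis yields $w \in A$ with $d(y, w) \ge \ell L / 2$; since $d(y, x) \le 2L$ and $\ell \ge 100$, this forces $d(w, x) > 3L$, so $w \notin \overline{B(x, 3L)}$. Because $E$-adjacent vertices always satisfy $d(\cdot, \cdot) = 1$ (immediate from (\ref{1.10}) and the edge convention (\ref{1.6})), any $E$-path in $A$ from $y$ to $w$ contains a last vertex $z$ in $B(x, 3L)$ before exiting, lying in $\partial_{\rm int} B(x, 3L)$ with $d(z, x) > 3L - 1$; the initial sub-path from $y$ to $z$ remains in one component of $A \cap \overline{B(x, 3L)}$, and the triangle inequality gives $d(y, z) \ge L$ up to a harmless bookkeeping constant.

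Set $I_k = \{i : A_k \cap B(x_i, 2L) \ne \emptyset\}$ for $k = 1, 2$, so that (\ref{A.3})(iii) forces $1 \in I_1$ and $M \in I_2$. In the main case $I_1 \cap I_2 \ne \emptyset$, I would take $i^{*} \in I_1 \cap I_2$, pick $y_k \in A_k \cap B(x_{i^{*}}, 2L)$, and define $C_k$ as the component of $A_k \cap \overline{B(x_{i^{*}}, 3L)}$ containing $y_k$; the local diameter bound gives both $C_k$ of diameter $\ge L$. For the separation, I would use that $d_E(C_1, C_2) \ge d_E(A_1, A_2) > 1$ since $C_k \subseteq A_k$, and, given any path $\gamma$ in $B(x_{i^{*}}, 5L)$ from $\partial C_1$ to $\partial C_2$, extend $\gamma$ by prepending a vertex of $C_1 \subseteq A_1$ adjacent to $\gamma(0)$ and appending one of $C_2 \subseteq A_2$; the extended path lies in $B(x_{i^{*}}, 5L) \cup C_1 \cup C_2 \subseteq B(x_{*}, 5\ell L)$ and joins a vertex of $A_1$ to one of $A_2$, so truncating at the first exit from $A_1$ and the last entry into $A_2$ produces a sub-path from $\partial A_1$ to $\partial A_2$ in $B(x_{*}, 5\ell L)$ that must meet $\Sigma$ by hypothesis, forcing $\gamma$ to meet $\Sigma$ as well.

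The main obstacle will be the complementary case $I_1 \cap I_2 = \emptyset$, where no single $x_i$ has both $A_1$ and $A_2$ within its $2L$-ball. Here I would analyze the transition with $i_0 := \max I_1 < M$ and $j_0 := \min\{i > i_0 : i \in I_2\}$, using $d(x_{i_0}, x_{j_0}) \le (j_0 - i_0) L$ to place the relevant points of $A_1$ and $A_2$ in a common $\overline{B(x_{i^{*}}, 3L)}$ for a suitable $i^{*} \in \{i_0, j_0\}$. The delicacy is that when one of the $A_k$ only grazes $\overline{B(x_{i^{*}}, 3L)}$ in the outer annulus $\{2L < d(\cdot, x_{i^{*}}) \le 3L\}$, the local diameter bound does not apply directly to its component; a finer argument exploiting the forced topological structure of $\Sigma$ in the transition region is needed to assemble a connected piece $C_k$ of diameter $\ge L$ from the local data before the separation step can be repeated.
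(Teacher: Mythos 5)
Your ``main case'' ($I_1\cap I_2\neq\emptyset$) is fine, and the separation transfer there (extend a path between the local pieces to a path from $\partial A_1$ to $\partial A_2$ inside $B(x_*,5\ell L)$) is essentially correct, modulo harmless bookkeeping in the diameter estimate. But the case you defer, $I_1\cap I_2=\emptyset$, is not a delicate side case -- it is the generic situation and the whole content of the lemma: $A_1$ and $A_2$ typically sit at opposite ends of the chain $x_1,\dots,x_M$, with $M$ of order $\ell$, so no ball $B(x_{i},2L)$ (nor $\ov{B(x_i,3L)}$) meets both. Your sketched fallback, choosing $i_0=\max I_1$, $j_0=\min\{i>i_0:\,i\in I_2\}$ and using $d(x_{i_0},x_{j_0})\le (j_0-i_0)L$ to put pieces of \emph{both} $A_1$ and $A_2$ in a common $\ov{B(x_{i^*},3L)}$, cannot work: $j_0-i_0$ can be of order $M$, so there is in general no common $3L$-ball, and no amount of ``finer analysis of $\Sigma$ in the transition region'' will produce a connected piece of $A_2$ of diameter $L$ near the transition index, because $A_2$ need not come anywhere near it. At the local scale one must use a second set that is \emph{not} extracted from $A_2$.

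The paper's resolution, which is the idea missing from your proposal, is to enlarge $A_1$ to $\cC$, the connected component of $A_1$ in $(A_1\cup\Sigma^c)\cap B(x_*,5\ell L)$. One checks that $\ov{A}_2\cap\cC=\emptyset$ and $\partial\cC\cap B(x_*,5\ell L)\subseteq\Sigma$, so $\cC$ and $A_2$ are still separated by $\Sigma$ in $B(x_*,5\ell L)$. Let $i_*$ be the largest $i$ with $\cC\cap B(x_i,2L)\neq\emptyset$. If $i_*=M$, one concludes as in your main case, with $\cC$ in the role of $A_1$. If $i_*<M$, the two sets realizing $S_{x_{i_*},L}$ are a diameter-$L$ connected piece of $\cC\cap\ov{B(x_{i_*},3L)}$ and the auxiliary vertical segment $J\subseteq B(x_{i_*+1},L)$ (whose height $\sim L^{\beta/2}$ gives it $d$-diameter at least $L$), which satisfies $d_E(J,\cC)>1$ since $\cC\cap B(x_{i_*+1},2L)=\emptyset$ by maximality. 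If $S_{x_{i_*},L}$ failed, there would be a $\Sigma$-avoiding path in $B(x_{i_*},5L)$ from that piece of $\cC$ to $\partial J$; but such a path lies in $\Sigma^c\cap B(x_*,5\ell L)$ and is attached to $\cC$, so it would extend $\cC$ into $B(x_{i_*+1},2L)$, contradicting the choice of $i_*$. Without this device (the $\Sigma^c$-hull of $A_1$ together with a local auxiliary set replacing $A_2$), your outline does not yield the lemma.
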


\begin{proof}
Let $\cC$ denote the connected component of $A_1$ in $(A_1 \cup \Sigma^c) \cap B(x_*, 5 \ell L)$. By construction we have:
\begin{equation}\label{A.4a}
\partial \cC \cap B(x_*, 5 \ell L) \subseteq \Sigma\,.
\end{equation}
Moreover we also have
\begin{equation}\label{A.5}
\ov{A}_2 \cap \cC = \emptyset\,.
\end{equation}

\medskip\n
Indeed otherwise we could find a path in $B(x_*, 5 \ell L)$ from $A_1$ to $\ov{A}_2$ not meeting $\Sigma$ after its last visit to $A_1$, and as a result we could find a path from $\partial A_1$ to $\partial A_2$ in $B(x_*, 5 \ell L)$ not meeting $\Sigma$, a contradiction.

\medskip
By (\ref{A.5}) and (\ref{A.4a}) we now see that $d_E(\cC, A_2) > 1$, and any path from $\partial \cC$ to $\partial A_2$ in $B(x_*,5 \ell L)$ meets $\Sigma$, i.e.,
\begin{equation}\label{A.6}
\mbox{$\cC$ and $A_2$ are separated by $\Sigma$ in $B(x_*,5 \ell L)$}\,.
\end{equation}

\n
By (\ref{A.3}) iii) we know that $\cC \cap B(x_1,2L) \not= \emptyset$, and we denote by $i_*$ the largest $1 \le i \le M$, such that $\cC \cap B(x_i, 2L) \not= \emptyset$. We first note that
\begin{equation}\label{A.7}
\mbox{when $i_* = M$,  then $\sigma \in S_{x_M,L}$}\,.
\end{equation}

\n
Indeed $\cC$ and $A_2$ have diameter at least $\ell  L > 6 L + 2$ and thus both meet $\partial B(x_M,3L)$ and $B(x_M,2L)$. We can hence choose connected subsets $A'_1$ of $\cC \cap \ov{B(x_M,3L)}$ and $A'_2$ of $A_2 \cap \ov{B(x_M,3L)}$ with diameter at least $L$. Due to (\ref{A.6}) they are separated by $\Sigma$ in $B(x_*,5 \ell L)$ and hence in $B(x_M,5L)$. The claim (\ref{A.7}) now follows.

\medskip
We then observe that
\begin{equation}\label{A.8}
\mbox{when $1 \le i_* < M$, then $\sigma \in S_{x_{i_*},L}$}\,.
\end{equation}

\n
Indeed consider the ``vertical segment'', (with $\pi_G$ and $\pi_\IZ$ the respective $G$-projection and $\IZ$-projection on $E$):
\begin{equation*}
J = \{x \in E ; \;\pi_G(x) = \pi_G(x_{i_*+1}) \;\mbox{and} \; -1 \le \pi_\IZ(x) - \pi_\IZ(x_{i_*+1}) \le L^{\beta/2}\} \subseteq B(x_{i_*+1},L)\,.
\end{equation*}

\n
By assumption $\cC \cap B(x_{i_*+1}, 2L) = \emptyset$, so that $\ov{J} \cap \cC \subseteq B(x_{i_*+1},2L) \cap \cC = \emptyset$, and $d_E (J, \cC) > 1$. Suppose that $\sigma \notin S_{x_{i_*},L}$. Due to (\ref{A.3}) ii) we see that $J \subseteq \ov{B(x_{i_*},3L)}$, and we can extract a connected subset $\cC'$ of $\cC \cap \ov{B(x_{i_*},3L)}$ with diameter at least $L$ and a path from $\partial \cC'$ to $\partial J$ in $B(x_{i_*},5L) \backslash \Sigma$. This shows that $\cC$ extends to $\partial J$ and thus meets $B(x_{i_*+1},2L)$, a contradiction. The claim (\ref{A.4}) now follows from (\ref{A.7}), (\ref{A.8}), and Lemma \ref{lemA.1} is proved.
\end{proof}

We now come to the main result of this appendix.

\begin{proposition}\label{propA.2}
\begin{equation}\label{A.9}
\begin{split}
&\mbox{$\cS$ is a family of increasing events on $\{0,1\}^E$ that cascades with}
\\
&\mbox{complexity at most $\alpha + \frac{\beta}{2}$}\,.
\end{split}
\end{equation}
\end{proposition}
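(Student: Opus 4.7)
The measurability (3.1) and the monotonicity are immediate. Since $S_{x,L}$ depends on $\sigma$ only through $\Sigma(\sigma) \cap B(x,5L)$, it is $\sigma(\Psi_{x'};\, x' \in B(x,10L))$-measurable; and if $\sigma \le \sigma'$, the inclusion $\Sigma(\sigma) \subseteq \Sigma(\sigma')$ shows that any witnessing pair $A_1, A_2$ for $\sigma \in S_{x,L}$ remains a witnessing pair for $\sigma' \in S_{x,L}$, so $S_{x,L}$ is increasing.

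To build the cascading set, I would fix $\ell$ multiple of $100$, $x \in E$ and $L \ge 1$ integer, and let $\Lambda$ be a maximal collection of points in $\ov{B(x, 9\ell L)}$ with pairwise $d$-distances at least $L/10$. Comparing via (1.11) the total volume of the disjoint balls $B(y, L/20)$ for $y \in \Lambda$ to that of $B(x, 10\ell L)$ yields $|\Lambda| \le c\,\ell^{\alpha + \beta/2}$, which verifies (3.2) and (3.3); maximality further implies that every point of $\ov{B(x, 9\ell L)}$ lies within $L/10$ of some element of $\Lambda$.

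To establish the cascade inclusion (3.4), fix $\sigma \in S_{x, \ell L}$ with witnesses $A_1, A_2 \subseteq \ov{B(x, 3\ell L)}$, each connected of $d$-diameter at least $\ell L$ and separated by $\Sigma(\sigma)$ in $B(x, 5\ell L)$. Exploiting the diameter bound, one picks $a_1, b_1 \in A_1$ with $d(a_1, b_1) \ge \ell L$ and $a_2, b_2 \in A_2$ with $d(a_2, b_2) \ge \ell L$. Next, I would construct a first chain $x_1^{(1)}, \dots, x_{M_1}^{(1)}$ in $\Lambda \cap \ov{B(x,4\ell L)}$ with consecutive $d$-distances at most $L$, such that $x_1^{(1)}$ lies within $L/10$ of $a_1$ and $x_{M_1}^{(1)}$ within $L/10$ of $a_2$. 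Such a chain is obtained by discretizing at $d$-scale $L/4$ any nearest-neighbor path in $E$ from $a_1$ to $a_2$ staying in $\ov{B(x,4\ell L)}$, then pulling each sample to a nearest point of $\Lambda$. Lemma \ref{lemA.1} applied to this chain yields a first center $c_1 \in \Lambda$ with $\sigma \in S_{c_1, L}$.

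The main obstacle is producing a second center $c_2 \in \Lambda$ with $\sigma \in S_{c_2, L}$ and $d(c_1, c_2) \ge \ell L/100$, since a priori $c_1$ may lie anywhere along the first chain, possibly near $a_1$ or near $a_2$. To overcome this, the triangle inequality applied to the pairs $(a_1, b_1)$ and $(a_2, b_2)$ lets one choose $a_1' \in \{a_1, b_1\}$ and $a_2' \in \{a_2, b_2\}$ with $d(a_i', c_1) \ge \ell L/2$ for $i = 1,2$. I would then construct a second chain from near $a_1'$ to near $a_2'$ exactly as before, but additionally requiring every chain point to lie outside $B(c_1, \ell L/40)$. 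The existence of such a chain relies on the product structure $E = G \times \IZ$, which makes $\ov{B(x, 4\ell L)} \setminus B(c_1, \ell L/40)$ graph-connected: from any admissible point one detours in the $\IZ$-direction (the vertical extent of $B(x, 4\ell L)$ being much larger than that of $B(c_1, \ell L/40)$) and then traverses a horizontal cross-section sitting entirely above or below $B(c_1, \ell L/40)$. After the pull-to-$\Lambda$ step, the resulting chain still avoids $B(c_1, \ell L/50)$; Lemma \ref{lemA.1} then produces $c_2 \in \Lambda$ with $c_2 \notin B(c_1, \ell L/50)$, so $d(c_1, c_2) > \ell L/50 > \ell L/100$, proving (3.4).
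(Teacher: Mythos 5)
Your proposal is correct in outline and shares the paper's overall architecture (a net $\Lambda$ of cardinality $\le c\,\ell^{\alpha+\frac{\beta}{2}}$, plus Lemma \ref{lemA.1} applied to chains of $\Lambda$-points with steps $\le L$ linking a neighborhood of $A_1$ to a neighborhood of $A_2$), but it handles the crucial separation requirement in (\ref{3.4}) by a genuinely different route. The paper constructs \emph{two} chains $\pi,\pi'$ simultaneously, anchored at four points $x_1,x_1',x_2,x_2'$ of mutual distance $\ge 24\,\frac{\ell}{100}L$, and keeps the entire chains at distance $\ge \frac{\ell}{100}L$ from each other by an explicit case analysis (cases 1), 2a), 2b), using the product structure to route them at different heights or around a far-away $G$-point $\ov{y}$); Lemma \ref{lemA.1} is then applied to each chain, and the two resulting centers are automatically far apart. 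You instead apply Lemma \ref{lemA.1} once, obtain $c_1$, and only then build a second chain constrained to avoid $B(c_1,\ell L/40)$, which forces the second center away from $c_1$; this is arguably leaner, since you only need to steer one chain around a single ball rather than keep two whole chains apart, at the price of a connectivity claim for $\ov{B(x,4\ell L)}\setminus B(c_1,\ell L/40)$. That claim is true, but your one-line justification is slightly loose: if $B(c_1,\ell L/40)$ protrudes above (resp.\ below) the vertical range of $B(x,4\ell L)$, there is no admissible cross-section on that side; one must note that in this situation every point of $\ov{B(x,4\ell L)}$ lying in the horizontal shadow of $B(c_1,\ell L/40)$ sits on the opposite side of it, so the detour can always be made through the full horizontal slab at the extreme height on that opposite side, connected to everything else via a vertical fiber over some $v^*\in B_G(\pi_G(x),4\ell L)\setminus B_G(\pi_G(c_1),\ell L/40)$. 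With that boundary case spelled out (and the trivial adjustments: take $\Lambda$ inside $B(x,9\ell L)$ so that (\ref{3.2}) holds literally, and use strict separation or radius $L/25$ in the disjointness argument for the volume bound), your argument is complete; the paper's product net $\Lambda_G\times\Lambda_\IZ$ and its property (\ref{A.12}) versus your maximal $d$-separated net is only a cosmetic difference.
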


\begin{proof}
Clearly $\cS$ is a family of increasing events which satisfy (\ref{3.1}). For any given $\ell$ multiple of $100$, $x_*$ in $E$, and integer $L \ge 1$, we want to find a finite subset $\Lambda$ of $E$ for which (\ref{3.2}) - (\ref{3.6}) hold, with $\lambda = \alpha + \frac{\beta}{2}$ and $x_*$ in place of $x$ in (\ref{3.2}) - (\ref{3.6}). Without loss of generality and to simplify notation, we assume that $x_* = (y_*,0)$.

\medskip
We consider $\Lambda_G \subseteq B_G(y_*,5 \ell L)$ defined as follows. When $L \le 4$, $\Lambda_G = B_G(y_*, 5 \ell L)$ and when $L > 4$, $\Lambda_G$ is a maximal set of points in $B_G(y_*,5 \ell L)$ with mutual $d_G$-distance bigger than $L/4$. By construction we see that
\begin{equation}\label{A.10}
\begin{array}{l}
B_G(y_*, 5 \ell L) \subseteq \bigcup\limits_{y \in \Lambda_G} \,B(y,L/4)
\end{array}
\end{equation}

\n
and by the $\alpha$-Ahlfors regularity of $G$, see (\ref{1.7}), with similar arguments as in the proof of Proposition \ref{prop3.2}, we find that
\begin{equation}\label{A.11}
|\Lambda_G | \le c \,\ell^\alpha\,.
\end{equation}

\medskip\n
Further note that for $y \in G$, $R\ge 0$, the ball $B_G(y,R)$ is connected, and using (\ref{A.10}) when $L > 4$, we have the following local connectivity property of $\Lambda_G$:
\begin{equation}\label{A.12}
\begin{array}{l}
\mbox{when $B_G(y,R) \subseteq B_G(y_*,5 \ell L)$ and $y', y{''} \in \Lambda_G \cap B_G(y,R)$, there is a}
\\
\mbox{sequence $y_1,\dots ,y_M$ in $\Lambda_G \cap B_G(y,R + \frac{L}{4})$ such that $y_1 = y'$, $y_M = y{''}$,}
\\
\mbox{and $d_G(y_i, y_{i+1}) \le L$, for $1\le i < M$}\,.
\end{array}
\end{equation}

\medskip\n
We also introduce the finite subset of $\IZ$:
\begin{equation}\label{A.13}
\Lambda_\IZ = a \, \IZ \cap [-(5 \ell L)^{\beta/2}, (5 \ell L)^{\beta/2}], \;\mbox{where}\; a = \max (1, [(L/4)^{\frac{\beta}{2}}])\,.
\end{equation}

\medskip\n
We then define $\Lambda = \Lambda_G \times \Lambda_\IZ$, and note that
\begin{equation}\label{A.14}
\begin{array}{l}
\Lambda \subseteq B(x_*,5 \ell L), \; \mbox{and} \; B(x_*,5 \ell L) \subseteq \bigcup\limits_{x \in \Lambda} B\Big(x,\frac{L}{4}\Big)\,,
\\[1ex]
\mbox{(when $L \le 4$, $\Lambda$ actually coincides with $B(x_*,5 \ell L))$.}
\end{array}
\end{equation}
Further one has 
\begin{equation}\label{A.15}
|\Lambda| \le c \, \ell^{\alpha + \frac{\beta}{2}}\,.
\end{equation}

\medskip\n
Thus $\Lambda$ satisfies (\ref{3.2}), (\ref{3.3}), and Proposition \ref{propA.2} will follow once we prove (\ref{3.4}).

\medskip
To this end we consider $\sigma \in S_{x_*,5 \ell L}$ and $A_1, A_2$ connected subsets of $\ov{B(x_*,3 \ell L)}$ with diameter at least $\ell L$ separated by $\Sigma$ in $B(x_*,5 \ell L)$, see below (\ref{A.2}) for notation.

\medskip
By Lemma \ref{lemA.1}, the claim (\ref{3.4}) will follow once we find two sequences in $\Lambda$, $\pi = (\pi(i), 1 \le i \le M)$ and $\pi' = (\pi'(j), 1 \le j \le M')$, such that
\begin{equation}\label{A.16}
\begin{array}{rl}
{\rm i)} & \mbox{both $\pi$ and $\pi'$ satisfy (\ref{A.3})}.
\\[1ex]
{\rm ii)} & d(\pi(i), \, \pi'(j)) \ge \dis\frac{\ell}{100} \,L, \; 1 \le i \le M, \; 1 \le j \le M'\,.
\end{array}
\end{equation}

\n
We write $\wt{\ell} = \ell / 100 (\ge 1$ by assumption). As we now explain, we can find $x_1,x'_1,x_2,x'_2$ in $\Lambda$ so that
\begin{equation}\label{A.17}
\begin{array}{rl}
{\rm i)} & \mbox{the four points have mutual distance at least $24 \wt{\ell} \,L$}.
\\[2ex]
{\rm ii)} & A_1 \cap B(x_1, L/4) \not= \emptyset, \; A_1 \cap B(x'_1, L/4) \not= \emptyset\,,
\\[2ex]
{\rm iii)} & A_2 \cap B(x_2, L/4) \not= \emptyset, \; A_2 \cap B(x'_2, L/4) \not= \emptyset\,.
\end{array}
\end{equation}

\medskip\n
Indeed we can pick two points of $A_1$ with mutual distance at least $\ell L$. Since $A_2$ is connected and has diameter at least $\ell L$, it has at least one point outside the union of the two balls of radius $50 \wt{\ell} \,L-1$ centered at the above two points. Likewise $A_2$ must exit the ball of radius $25 \wt{\ell} \,L$ centered at this last point and hence meet the interior boundary of this ball. In this fashion we have constructed two points on $A_1$ and two points on $A_2$, so that these four points have mutual distance at least $25 \wt{\ell} \, L-1$. By (\ref{A.14}) we can thus find $x_1 = (y_1,z_1)$, $x'_1 = (y'_1, z_1')$, $x_2 = (y_2,z_2)$, $x'_2 = (y'_z, z'_2)$ in $\Lambda$ satisfying (\ref{A.17}). 

\bigskip
Up to relabelling we assume that $z_1 \ge z'_1$ and $z_2 \ge z'_2$.  We will construct the sequences $\pi$ and $\pi'$ satisfying (\ref{A.16}) when $z_1 \ge z_2$. The case where $z_1 < z_2$ is handled in a similar fashion exchanging the role of $1$ and $2$ in what follows. We from now on assume $z_1 \ge z_2$, and treat separately the case where $z_2$ lies in the interval $[z'_1,z_1]$ and the case where it does not.

\bigskip\n
1) $z'_1 \le z_2 \le z_1$:

\psfragscanon
\begin{center}
\includegraphics[width=4cm]{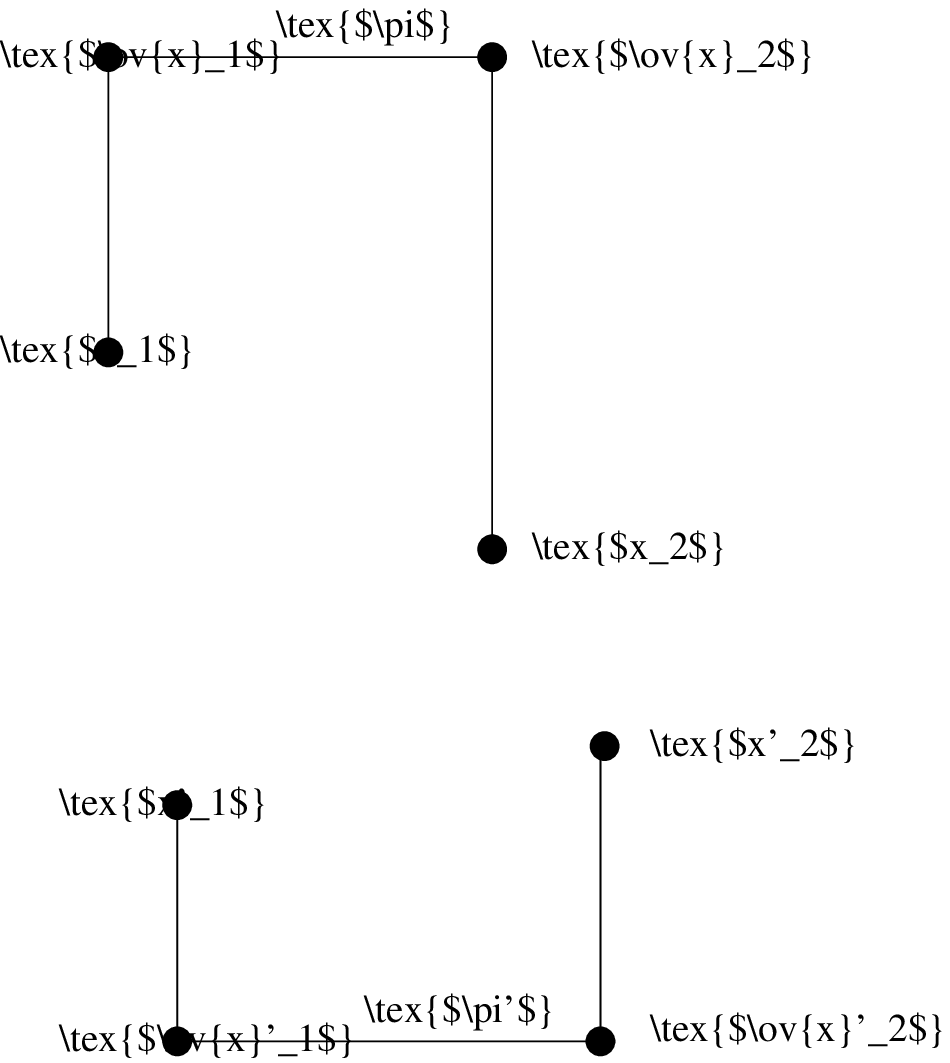}
\end{center}

\medskip\n
\begin{center}
Fig.~3: A schematic representation of $\pi$ and $\pi'$ in case 1)
\end{center}

\bigskip\n
We construct $\pi$ and $\pi'$ as follows. We pick $\ov{x}_1 = (y_1,\ov{z}_1)$ having same $G$-projection as $x_1$, with $\ov{z}_1$ the largest elements of $\Lambda_\IZ$ smaller or equal to $z_1 + (25 \wt{\ell} L)^{\beta/2}$, and define $\ov{x}_2 = (y_2,\ov{z}_1)$. By (\ref{A.12}) we can find a sequence in $\Lambda_G \cap B_G (y_*,3 \ell L + 1 + \frac{L}{4})$ linking $y_1$ and $y_2$ making steps of $d_G$-distance at most $L$. We thus define $\pi$ fulfilling (\ref{A.3}) which first moves up from $x_1$ to $\ov{x}_1$, then horizontally from $\ov{x}_1$ to $\ov{x}_2$, using the above mentioned sequence, and then down from $\ov{x}_2$ to $x_2$.

\medskip
Likewise to construct $\pi'$ we consider $\ov{x}'_1 = (y'_1, \ov{z}'_1)$ and $\ov{x}'_2 = (y'_2, \ov{z}'_1)$, where $ \ov{z}'_1$ denotes the smallest element of $\Lambda_\IZ$ bigger or equal to $z'_1 \wedge z'_2 - (25 \wt{\ell}\,L)^{\beta/2}$. We link $y'_1$ and $y'_2$ by a sequence in $\Lambda_G \cap B_G(y,3 \ell L + 1 + \frac{L}{4})$ as above, and construct $\pi'$ satisfying (\ref{A.3}), which first moves down from $x'_1$ to $\ov{x}'_1$, then horizontally from $\ov{x}'_1$ to $\ov{x}'_2$, and then vertically from $\ov{x}'_2$ to $x'_2$.

\medskip
The above constructed $\pi$ and $\pi'$, see Figure 3, thus satisfy (\ref{A.3}) and
\begin{equation}\label{A.18}
d(\pi(i), \pi(j)) \ge 24 \wt{\ell}\,L, \;\mbox{for} \; 1 \le i \le M, 1 \le j \le M'\,.
\end{equation}

\bigskip\n
2) $z_1\ge  z'_1 >z_2 (\ge z'_2)$:

\bigskip\n
We consider two sub-cases.

\medskip\n
a) $d_G(y'_1,y_2) \le 11 \wt{\ell}\,L$.

\medskip\n
We introduce $\ov{x}_2 = (y_2,z'_1)$ and link $y'_1$ and $y_2$ in $\Lambda_G \cap B_G(y'_1, 11 \wt{\ell}\,L + \frac{L}{4})$ by a sequence making steps of $d_G$-distance at most $L$. We then construct $\pi'$ which moves first horizontally from $x'_1$ to $\ov{x}_2$, using the above sequence and then downwards from $\ov{x}_2$ to $x'_2$.

\medskip
To construct $\pi$ we introduce $\ov{x}_1$ and $\ov{x}'_2$ in $\Lambda$ with same $G$-projection as $x_1$ and $x'_2$ respectively, with $\ov{x}_1$ the ``highest point'' in $\Lambda$ with distance at most $25 \wt{\ell}\,L$ from $x_1$ and $\ov{x}'_2$ the ``lowest point'' in $\Lambda$ with distance at most $25 \wt{\ell}\,L$ from $x'_2$, see Figure 4.

\medskip
We then pick $\ov{y}$ in $\Lambda_G \cap \ov{B_G(y_*, 3 \ell L)}$ having at least $d_G$-distance $50 \wt{\ell}\,L$ from $B_G(y'_1$,  $11 \wt{\ell}\,L + \frac{L}{4})$. This is possible because $\ov{B_G(y_*, 2\ell L)}$ has $d_G$-diameter at least $2\ell L$ and it suffices to pick $y \in  \ov{B_G(y_*, 2\ell L)}$ with $d_G(y,y'_1) \ge \ell L$ and then $\ov{y}$ in $\Lambda_G$ within $d_G$-distance $\frac{L}{4}$ from $y$.

\medskip
Then $\pi$ is constructed as follows. It first moves upwards from $x_1$ to $\ov{x}_1$ then horizontally to the point having $G$-projection $\ov{y}$, then downwards to the point with same $\IZ$-projection as $\ov{x}'_2$, then horizontally to $\ov{x}'_2$, and then upwards to $x'_2$. This construction can be performed so that the $G$-projection of $\pi$ lies in $\Lambda_G \cap B(y_*, 3 \ell L + 1 + \frac{L}{4})$.

\psfragscanon
\begin{center}
\includegraphics[width=7cm]{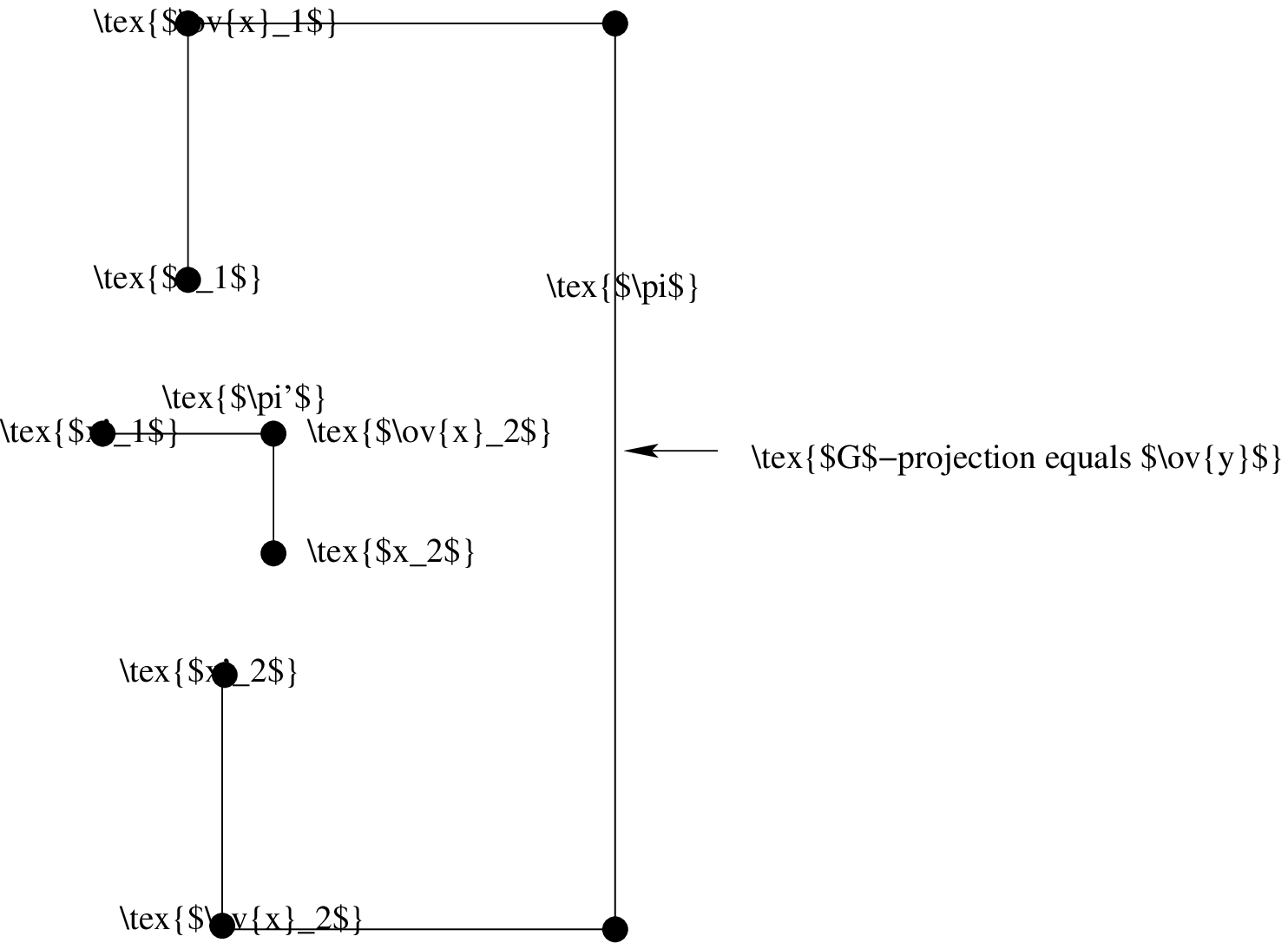}
\end{center}

\medskip\n
\begin{center}
Fig.~4: A schematic representation of $\pi, \pi'$ in case 2a)
\end{center}

\bigskip\n
The above constructed paths $\pi, \pi'$ satisfy (\ref{A.3}) and
\begin{equation}\label{A.19}
d(\pi(i), \pi(j)) \ge 12 \wt{\ell} \, L\,.
\end{equation}

\n
The remaining sub-case of 2) corresponds to 

\bigskip\n
b) $d_G(y'_1,y_2) > 11 \wt{\ell}\,L$.

\medskip
We then construct $\pi$ linking $x_1$ and $x_2$ and $\pi'$ linking $x'_1$ and $x'_2$ as in 1).

\psfragscanon
\begin{center}
\includegraphics[width=7cm]{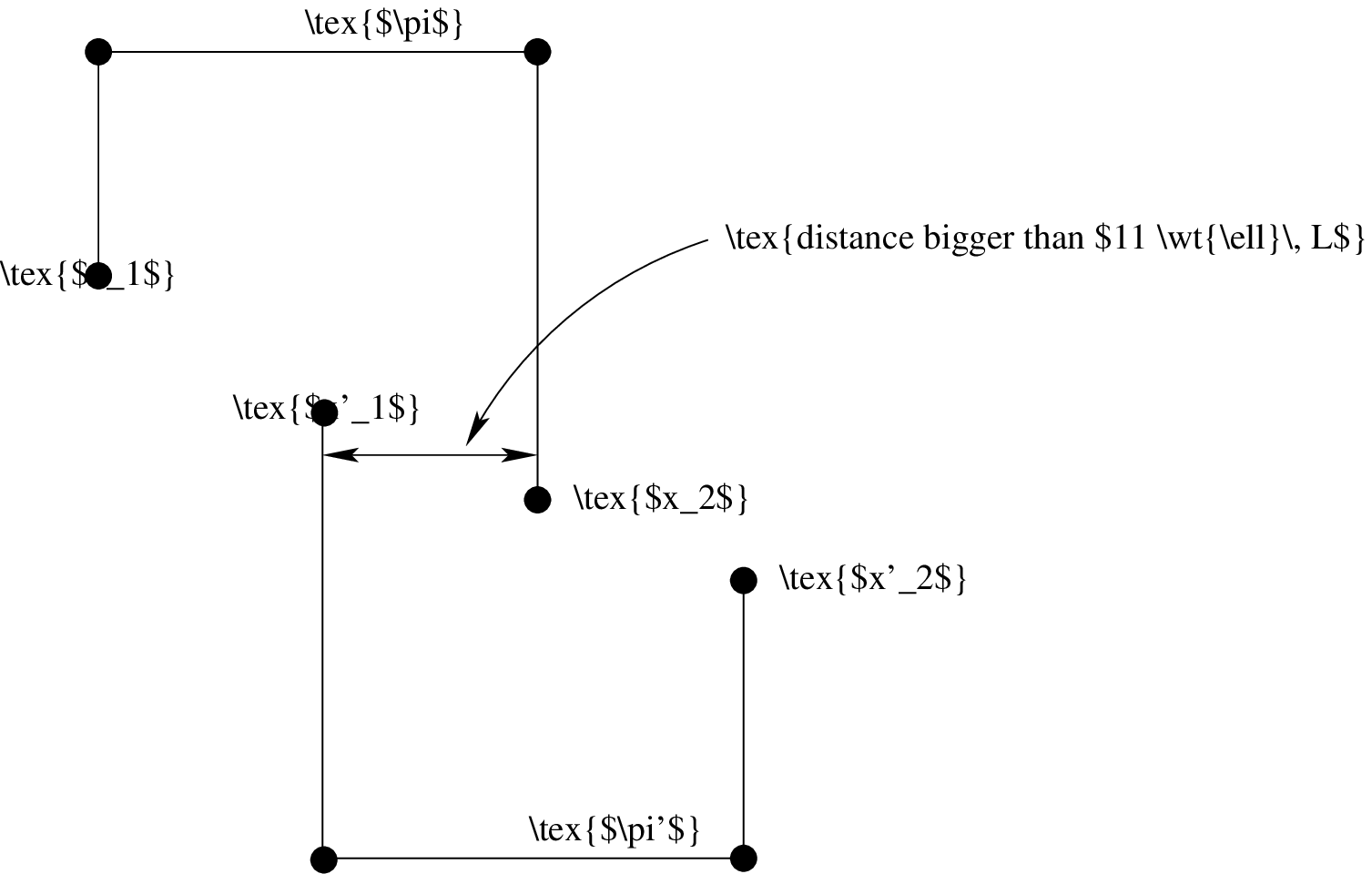}
\end{center}

\medskip\n
\begin{center}
Fig.~5: A schematic representation of $\pi$ and $\pi'$ in case 2) b)
\end{center}

\bigskip\n
The paths $\pi$ and $\pi'$ constructed in this fashion, see Figure 5, satisfy (\ref{A.3}) and
\begin{equation}\label{A.20}
d(\pi(i), \pi'(j)) \ge 11 \wt{\ell} \, L, \;\; 1 \le i \le M, \, 1 \le j \le M'\,.
\end{equation}

\n
Combining (\ref{A.18}) - (\ref{A.20}), we have thus completed the construction of $\pi, \pi'$ satisfying (\ref{A.16}) and Proposition \ref{A.2} follows.
\end{proof}


\end{document}